\newcommand{\la}{\lambda}
\def\dd{\mathrm{d}}
\DeclareMathOperator{\rk}{\mathrm{rk}}
\DeclareMathOperator{\Irr}{\mathrm{Irr}}
\def\ad{\mathrm{ad}}
\def\Ad{\mathrm{Ad}}
\def\Hom{\mathrm{Hom}}
\def\End{\mathrm{End}}
\def\Ker{\mathrm{Ker}\,}
\def\Res{\mathrm{Res}}
\def\Aut{\mathrm{Aut}}
\def\diag{\mathrm{diag}}
\def\C{\ensuremath{\mathbbm{C}}}
\def\Q{\mathbbm{Q}}
\def\Z{\mathbbm{Z}}
\def\R{\mathbbm{R}}
\def\bar{\overline}
\def\gl{\mathfrak{gl}}
\def\sl{\mathfrak{sl}}
\def\so{\mathfrak{so}}
\def\osp{\mathfrak{osp}}
\def\kk{\mathbbm{k}}
\def\kt{\mathbbm{k}^{\times}}
\def\om{\omega}
\def\eps{\epsilon}
\def\g{\mathfrak{g}}
\def\h{\mathfrak{h}}
\def\un{\mathbbm{1}}
\def\onto{\twoheadrightarrow}
\def\into{\hookrightarrow}
\def\ii{\mathrm{i}}
\def\Gal{\mathrm{Gal}}
\def\Irr{\mathrm{Irr}}
\def\GL{\mathrm{GL}}
\def\Sp{\mathrm{Sp}}
\def\bla{\boldsymbol{\lambda}}
\def\bula{\underline{\boldsymbol{\lambda}}}
\def\ula{\underline{\lambda}}
\def\bmu{\boldsymbol{\mu}}
\def\bumu{\underline{\boldsymbol{\mu}}}
\def\umu{\underline{\mu}}
\def\AAA{\mathbbm{A}}
\def\bT{\mathbf{T}}
\def\LRef{\ensuremath{\Lambda\mathrm{Ref}}}
\def\QRef{\ensuremath{\mathrm{QRef}}}
\def\Ref{\ensuremath{\mathrm{Ref}}}
\def\Id{\mathrm{Id}}
\newtheorem{theo}{Theorem}[section]
\newtheorem{theointr}{Theorem}
\newtheorem{prop}[theo]{Proposition}
\newtheorem{lemma}[theo]{Lemma}
\newtheorem{cor}[theo]{Corollary}
\newtheorem{conj}{Conjecture}
\def\mod{\ \mathrm{mod}\ }
\DeclareRobustCommand\widecheck[1]{{\mathpalette\@widecheck{#1}}}
\def\@widecheck#1#2{%
   \setbox\z@\hbox{\m@th$#1#2$}%
   \setbox\tw@\hbox{\m@th$#1%
      \widehat{%
         \vrule\@width\z@\@height\ht\z@
         \vrule\@height\z@\@width\wd\z@}$}%
   \dp\tw@-\ht\z@
   \@tempdima\ht\z@ \advance\@tempdima2\ht\tw@ \divide\@tempdima\thr@@
   \setbox\tw@\hbox{%
      \raise\@tempdima\hbox{\scalebox{1}[-1]{\lower\@tempdima\box\tw@}}}%
   {\ooalign{\box\tw@ \cr \box\z@}}}
\title{{\bf Infinitesimal Hecke algebras III}}
\author{Ivan Marin}
\date{December 20, 2010}
\begin{document}

\maketitle

\bigskip
\begin{center}
Institut de Math\'ematiques de Jussieu \\
Universit\'e Paris 7 \\
175 rue du Chevaleret \\
F-75013 Paris
\end{center}
\bigskip

\bigskip

\bigskip

\noindent {\bf Abstract.} We define Lie subalgebras of the group
algebra of a finite \emph{pseudo-}reflection group that are involved
in the definition of the Cherednik KZ-systems, and determine
their structure. We provide applications for computing the Zariski
closure of the image of generalized (pure) braid group $B$ inside
the representations of the corresponding Hecke algebras. We also
get unitarizability results for the representations of $B$
originating from Hecke algebras for suitable parameters,
and relate our Lie algebras with the topological closure of $B$ in these
compact cases.

\medskip

\noindent {\bf MSC 2010 :} 20C08, 20F36.

\def\RR{\mathcal{R}}
\def\SS{\mathcal{S}}
\def\HH{\mathcal{H}}
\def\XX{\mathrm{X}}

\section{Introduction}

Let $V$ be a finite-dimensional complex vector space,
and $W < \GL(V)$ a finite subgroup generated by pseudo-reflections,
namely invertible endormorphisms of $V$ which fix some hyperplane.
Such a group is called a (pseudo-)reflection group. We denote
by $\RR$ the set of (pseudo-)reflections of $W$.

Among them are the 2-reflection groups, namely when $\forall s \in \RR \ s^2 = 1$.
Examples of 2-reflection groups include the finite Coxeter groups. For a
2-reflection group, we introduced in \cite{HECKINF} the Lie subalgebra of the group
algebra $\C W$ generated by $\RR$, and call it the infinitesimal
Hecke algebra. This is a reductive Lie algebra that appears naturally in the
monodromy constructions of braid groups representations, also known as KZ systems.
This Lie algebra was decomposed in \cite{LIETRANSP} for the case of the
symmetric group, and in \cite{IH2} for the general case of 2-reflection
groups. We proved in \cite{IH2} that it can be identified with
the Lie algebra of an interesting algebraic group,
namely the Zariski closure of the image of the (generalized) braid group
inside the generic Hecke algebra associated to $W$. It also admits
a compact form that is relevant to the topological closure
of the same group, under some conditions on the
parameters involved.

The present paper is a continuation of \cite{IH2}. Here
we consider the general case of (pseudo-)reflection
groups.

There are several natural generalizations of the
infinitesimal Hecke algebras. The first one is
the Lie subalgebra $\HH$ of $\C W$ generated by the set $\RR$ of
all reflections. A second one is the Lie subalgebra $\HH_s$ generated
by a subset $\SS \subset \RR$ which admit the following
properties : it is invariant under $W$-conjugation,
and $s \mapsto \Ker(s-1)$ induces a bijection from $\SS$
to the reflecting hyperplanes.

For a conjugacy class $c \subset \SS$, we let $e_c$ denote the order of the
elements of $c$, and subdivide $\SS = \SS_+ \sqcup \SS_0$
with $e_c = 2$ for $s \in \SS_0$ and $e_c > 2$ for $s \in \SS_+$.
We let $\mathcal{C}_+$ denote the set of conjugacy classes in $\SS_+$,
and $\mathbbm{A}(W) = \prod_{c \in \mathcal{C}_+} \kk^{e_C -1}$
for a fixed field $\kk$ of characteristic 0.
A typical element of $\mathbbm{A}(W)$ is denoted
$\bula = (\ula^c)_{c \in \mathcal{C}_+}$,
with $\ula^c = (\la^c_1,\dots,\la^c_{e_c-1})$.
We thus get a family of generalizations, that contains $\HH_s$, by letting
$\HH(\bula)$ denote the Lie subalgebra of $\kk W$ generated by
$\SS_0$ and the $\la_1^c s + \dots + \la^c_{e_c-1} s^{e_c-1}$,
for $s \in c \subset \SS_+$.

From now on we assume that $W$ is not a 2-reflection group,
this case having been dealt with in \cite{IH2}, and also
that it is irreducible.

We introduce
subsets $\overline{\QRef}, \mathcal{E}, \mathcal{F}$ of
the set $\Irr(W)$ of irreducible representations of $W$, as well as
an equivalence relation $\approx$ in $\Irr(W)$.
The central result is a structure theorem for $\HH$, where we denote
$V_{\rho}$ the underlying vector space of $\rho \in \Irr(W)$.

\begin{theointr} $\HH$ is a reductive Lie algebra whose center
has dimension the cardinality $|\RR/W|$ of conjugacy classes of reflections.
Its semisimple part is
$$
\HH' = 
\left(\bigoplus_{\rho \in \overline{\QRef}} \sl(V_{\rho}) \right)
\oplus \left(\bigoplus_{\rho \in \mathcal{E}/\approx} \sl(V_{\rho}) \right)
\oplus \left(\bigoplus_{\rho \in \mathcal{F}/\approx} \osp(V_{\rho}) \right)
$$
\end{theointr}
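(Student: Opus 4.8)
The plan is to analyze $\HH$ through the Wedderburn decomposition $\C W\cong\bigoplus_{\rho\in\Irr(W)}\End(V_\rho)$ and to treat it factor by factor. First I would fix a $W$-invariant Hermitian form on each $V_\rho$, making every $\rho$ unitary, so that the associative anti-involution $x\mapsto x^{*}$ of $\C W$ determined by $w\mapsto w^{-1}$ satisfies $\rho(x^{*})=\rho(x)^{*}$ (Hermitian adjoint). Since the inverse of a reflection is again a reflection, the generating set $\RR$ is stable under $s\mapsto s^{-1}=s^{*}$, hence under the Lie involution $\theta(x)=-x^{*}$; thus $\HH$ is $\theta$-stable. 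A $\theta$-stable complex Lie subalgebra of $\gl(\C W)$ is reductive, the form $x\mapsto\tr(xx^{*})$ being definite on its compact real form, by the same argument as in \cite{HECKINF,IH2}. This yields the first assertion and a splitting $\HH=Z(\HH)\oplus\HH'$ with $\HH'=[\HH,\HH]$ semisimple.

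Next I would compute the center exactly. On one hand, for each conjugacy class $c$ of reflections the class sum $z_c=\sum_{s\in c}s$ lies in $\HH$, being a sum of generators, and in $Z(\C W)$; hence $z_c\in Z(\HH)$, and as the $z_c$ are linearly independent this gives $\dim Z(\HH)\geq|\RR/W|$. For the reverse inequality I use that $\RR$ generates $W$, so by Burnside the associative algebra generated by $\rho(\RR)$ is all of $\End(V_\rho)$ and $V_\rho$ is irreducible under $\HH_\rho:=\rho(\HH)$; Schur then forces $\rho(Z(\HH))\subseteq\C\,\Id_{V_\rho}$, whence $Z(\HH)\subseteq Z(\C W)$. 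Finally, the trace form makes $\HH'$ (spanned by commutators) orthogonal to $Z(\C W)$, and the orthogonal projection of each $s\in\RR$ onto $Z(\C W)$ is a multiple of $z_{c(s)}$; so $\HH\cap Z(\C W)$ is spanned by the $z_c$, giving $Z(\HH)=\bigoplus_c\C\,z_c$ of dimension exactly $|\RR/W|$.

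For the semisimple part I would work inside each factor. As $V_\rho$ is $\HH_\rho$-irreducible and $\HH_\rho$ reductive, $\HH_\rho':=[\HH_\rho,\HH_\rho]$ is semisimple and acts irreducibly on $V_\rho$; since $\HH_\rho$ and $W$ generate the same associative algebra, an $\HH_\rho'$-invariant bilinear form on $V_\rho$ is the same datum as a $W$-invariant one, controlled by the Frobenius--Schur indicator, and according to its absence, symmetry, or antisymmetry the candidate for $\HH_\rho'$ is $\sl(V_\rho)$, $\so(V_\rho)$, or $\sp(V_\rho)$. The substantial point is the \emph{lower bound}: that $\HH_\rho'$ equals this full classical algebra rather than a proper irreducible subalgebra. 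Here I would exploit that a reflection $s$ fixes a hyperplane, so $\rho(s)-\Id$ has small rank and the traceless generators are low-rank semisimple elements; I would then invoke, and extend to genuine roots of unity as required in the non-$2$-reflection setting, the classification of Lie algebras generated by such low-rank elements to rule out the exceptional small subalgebras, as done for $2$-reflection groups in \cite{IH2}.

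It then remains to assemble the global structure. The algebra $\HH'$ sits as a subdirect product in $\bigoplus_\rho\HH_\rho'$, so its simple ideals arise by gluing the factors indexed by those $\rho$ that are forced to coincide, and this is precisely what $\approx$ records. An $\approx$-class of $\sl$-type representations contributes a single diagonally embedded $\sl(V_\rho)$ (the classes indexed by $\mathcal{E}/\approx$), whereas the representations whose factor is already an unglued $\sl(V_\rho)$ make up $\overline{\QRef}$; a class pairing an orthogonal-type with a symplectic-type representation is glued into one orthosymplectic ideal $\osp(V_\rho)$, yielding the summands indexed by $\mathcal{F}/\approx$. I expect the main obstacle to be exactly this last bookkeeping together with the lower bound of the previous step: making the three sets $\overline{\QRef},\mathcal{E},\mathcal{F}$ and the relation $\approx$ explicit and proving the $\osp$-gluing, which forces a case-by-case analysis over the Shephard--Todd classification of irreducible non-$2$-reflection groups and a careful comparison of the $*$-structure on the paired representations.
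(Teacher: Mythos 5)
The reductivity argument and the computation of the center are correct and essentially identical to the paper's (which simply quotes \cite{IH2}, prop.~2.2: reductivity via the unitary trick, center spanned by the class sums, irreducibility of each $\rho_{\HH}$). The gaps are in your third and fourth steps. The first and most serious one: your claim that, because $\rho(\HH)$ and $\rho(W)$ generate the same associative algebra, an $\HH'$-invariant bilinear form on $V_\rho$ is ``the same datum as a $W$-invariant one, controlled by the Frobenius--Schur indicator'' confuses group-level duality with Lie-level duality. The dual of the Lie algebra representation $\rho_{\HH'}$ sends $x \mapsto -{}^t\rho(x)$, whereas the restriction to $\HH$ of the dual group representation $\rho^*$ sends a reflection $s$ to ${}^t\rho(s^{-1})$; these are different operations, and the two notions of self-duality genuinely diverge. (Already for a $2$-reflection group such as $\mathfrak{S}_n$ every $\rho$ is group-self-dual with indicator $+1$, yet $\rho_{\HH'}$ is self-dual only when $\rho \simeq \rho^* \otimes \eps$.) The paper's set $\mathcal{F}$ is defined by Lie-level self-duality, and its characterization (proposition \ref{propduauxHH}, resting on lemma \ref{lemcercle}) is that some twist $\rho^*\otimes\chi$ with a very specific $\chi$ lies in $\Irr(W)$, which forces each $\rho(s)$ to have at most two eigenvalues --- a condition invisible to the Frobenius--Schur indicator. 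This error contaminates both your determination of $\mathcal{F}$ and the decision between $\so$ and $\sp$; also, an $\approx$-class in $\mathcal{F}$ never ``pairs an orthogonal-type with a symplectic-type'' representation: $\osp(V_\rho)$ is merely the paper's notation for whichever of $\so(V_\rho)$, $\sp(V_\rho)$ occurs, and all members of a class have the same type.

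The second gap is that your framework assigns to \emph{every} irreducible $\rho$ a full classical candidate $\sl$, $\so$ or $\sp$, ``the substantial point'' being only the lower bound. That lower bound is false for $\rho \in \LRef\setminus\QRef$: if $\rho = \eta\otimes\Lambda^k\rho_0$ with $\rho_0$ a reflection-type representation, then $\rho_{\HH'}$ is the $k$-th exterior power of $(\rho_0\otimes\eta)_{\HH'}$, so $\rho(\HH')$ is a copy of $\sl(V_{\rho_0})$ embedded in $\gl(\Lambda^k V_{\rho_0})$ --- a proper irreducible subalgebra. This is precisely why the theorem's sum runs over $\overline{\QRef}$ and over $\Irr(W)\setminus\LRef = \mathcal{E}\sqcup\mathcal{F}$ rather than over all of $\Irr(W)$; your bookkeeping, which omits the role of $\LRef$, would produce a strictly larger algebra than $\HH'$. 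Relatedly, the mechanism you propose for the lower bound does not work: for $\rho\notin\LRef$ the rank of $\rho(s)-\Id$ equals $\dim\rho$ minus the multiplicity of the eigenvalue $1$, which is typically of order $\dim\rho$ (rank $6$ for the $9$-dimensional representations of $G_{25}$, for example), so no classification of Lie algebras generated by low-rank elements applies. The paper's actual route for the lower bound and for ruling out extra gluings is: Clifford-theoretic induction from the $2$-reflection subgroup $G(de,de,r)$ using the results of \cite{IH2}, rank criteria (lemma \ref{critsimple}), the parity/type lemma \ref{lemrhoexcept} to exclude the exceptional isomorphisms $\so_3\simeq\sl_2$, $\so_4\simeq\sl_2\times\sl_2$, $\so_6\simeq\sl_4$ and $\so_8$-triality when assembling the simple ideals (lemma \ref{lemtheo}), and machine computation (CHEVIE) for the exceptional groups. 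You correctly anticipate a Shephard--Todd case analysis, but the two structural errors above mean the plan as written would prove a false decomposition.
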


We then
prove that there exists a dense open subset of $\AAA(W)$
over which $\HH(\bula)$ has the same semisimple part as $\HH$.

\begin{theointr} The Lie algebras $\HH$ and $\HH(\bula)$ are
reductive for all values of $\bula$. There exists an hyperplane complement
$\AAA(W)^{\times} \subset \AAA(W)$ such that, for
all $\bula \in \AAA(W)^{\times}$, $\HH(\bula)$
has a center of dimension $|\SS/W|$, and
semisimple part $\HH(\bula)' = \HH'$
\end{theointr}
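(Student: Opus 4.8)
The plan is to establish the second theorem by leveraging the first theorem (the structure theorem for $\HH$) and treating $\HH(\bula)$ as a deformation of $\HH$. The key conceptual point is that $\HH$ is the special case of $\HH(\bula)$ obtained when the generating elements $\la_1^c s + \dots + \la_{e_c-1}^c s^{e_c-1}$ are replaced by the full set of powers $s, s^2, \dots, s^{e_c-1}$ for each reflection. So $\HH(\bula) \subseteq \HH$ for every $\bula$, and the content of the theorem is that for generic $\bula$ we recover the entire semisimple part.

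Reductivity for all $\bula$ should follow from the same argument used for $\HH$ in the structure theorem: the Lie subalgebra is generated by elements that are semisimple (or at least by $W$-conjugation-stable sets of group algebra elements), and $\C W$ carries a natural involution and nondegenerate trace form under which $\HH(\bula)$ is stable, forcing reductivity. \emph{First} I would verify that $\HH(\bula)$ is stable under the anti-involution $s \mapsto s^{-1}$ extended to $\kk W$ — the generators $\la_1^c s + \dots + \la_{e_c-1}^c s^{e_c-1}$ are \emph{not} individually stable, but one can arrange stability by passing to the reductive criterion via the trace form, exactly as in the 2-reflection case of \cite{IH2}.

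The heart of the proof is the genericity statement for the semisimple part. \emph{The main obstacle} is showing that the \emph{a priori} proper inclusion $\HH(\bula)' \subseteq \HH'$ is in fact an equality on a dense open set. My strategy is to view the dimension $\dim \HH(\bula)$ as a lower-semicontinuous function of $\bula \in \AAA(W)$: since $\HH(\bula)$ is generated by elements depending polynomially (indeed linearly) on $\bula$, the dimension of the Lie algebra they generate can only drop on a closed subvariety, so it attains its maximum on a dense open set. It then suffices to exhibit \emph{one} value $\bula_0$ for which $\HH(\bula_0)' = \HH'$; the locus where equality holds is the complement of where the dimension drops, which I would show is a hyperplane complement $\AAA(W)^{\times}$ by analyzing the explicit defining conditions. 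The natural candidate for $\bula_0$ is a value corresponding to roots of unity, where $\la_1^c s + \dots + \la_{e_c-1}^c s^{e_c-1}$ becomes proportional to a primitive idempotent-type combination, allowing one to recover each power $s^k$ and hence all of $\HH$.

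To carry out the existence of such a $\bula_0$, I would use the fact that for each class $c$ the map sending $(\la_1^c, \dots, \la_{e_c-1}^c)$ to the generator is linear, and that the collection of powers $\{s^k : 1 \le k \le e_c - 1\}$ can be linearly recovered from finitely many generic choices of the $\ula^c$ via a Vandermonde-type argument. This shows the Lie algebra generated is all of $\HH$ for generic parameters within each block, and hence $\HH(\bula)' = \HH'$ there. Finally, the center dimension $|\SS/W|$ (as opposed to $|\RR/W|$ for $\HH$) reflects that $\SS$ selects one reflection per hyperplane, so the abelian part is governed by conjugacy classes in $\SS$; I would compute it by identifying the center with the span of the class sums of the chosen generators, as in the structure theorem. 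The semicontinuity argument guarantees the equality $\HH(\bula)' = \HH'$ propagates to the whole dense open set, completing the proof.
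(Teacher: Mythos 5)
Your deformation strategy is sound in outline (the dimension of $\HH(\bula)$ is indeed lower semicontinuous in $\bula$, and $\HH(\bula)'\subset\HH'$ since all generators lie in the span of $\RR$), but it hinges entirely on exhibiting a single value $\bula_0$ with $\HH(\bula_0)'=\HH'$, and this is where it breaks. Your Vandermonde argument recovers the powers $s^k$ by combining the generators attached to \emph{several different} parameter values $\ula^{c,(1)},\dots,\ula^{c,(e_c-1)}$; but $\HH(\bula_0)$ has only \emph{one} generator $\la^c_1 s+\dots+\la^c_{e_c-1}s^{e_c-1}$ per reflection, so this argument only shows that the Lie algebra generated by $\bigcup_{\bula}\HH(\bula)$ equals $\HH$, which is vacuous. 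Nor does recovering the powers $s^k$ at the \emph{associative} level suffice: Lagrange interpolation does show that, off an explicit arrangement $\mathcal{L}_1$, the generators of $\HH(\bula)$ generate the same associative algebra as $\SS$, but Lie generation is strictly finer. The paper's own computations exhibit this failure: for $W=G_4$ and parameter $(1,-1)$, which avoids $\mathcal{L}_1$ (so every $\rho_{\HH(\bula)}$ is irreducible and the associative algebras agree), one has $\HH(-1)'\simeq\sl_2\times\sl_2$ whereas $\HH'\simeq\sl_2^3\times\sl_3$. Worse, the most natural explicit candidate $\bula_0=(1,0,\dots,0)$, for which $\HH(\bula_0)=\HH_s$, genuinely fails for some groups: the paper proves $\HH_s'\neq\HH'$ for $G(de,e,r)$ with $d>2$ even and $r\geq 3$. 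So there is no cheap $\bula_0$: the existence of good parameters is exactly the hard content of the theorem, and the paper establishes it representation by representation, applying interpolation to the tensor square $\rho_{\HH(\bula)'}\otimes\rho_{\HH(\bula)'}$ to count its irreducible components (lemma \ref{critsltens}), and then controlling which $\rho^1_{\HH(\bula)'}$, $\rho^2_{\HH(\bula)'}$ become isomorphic or dual so as to invoke lemma \ref{lemtheo}.

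Two further gaps. First, abstract semicontinuity only yields a dense Zariski-open set of good parameters, while the statement asserts a \emph{hyperplane complement}; the paper gets this because its degeneracy conditions are explicit linear conditions on $\bula$ (the arrangements $\mathcal{L}_1,\mathcal{L}_2,\mathcal{L}_3$ cut out by the Vandermonde vectors $v^c_i$, recording coincidences among values, sums and differences of $P_{\ula^c}$ at the $e_c$-th roots of unity); supplying that analysis is essentially redoing the paper's proof rather than a shortcut around it. Second, your reductivity argument is unresolved: you correctly observe that the generators are not stable under the natural anti-involution, but ``arranging stability via the trace form as in the 2-reflection case'' of \cite{IH2} does not work, since in the 2-reflection case the generators are unitary involutions, hence self-adjoint, and that is precisely what fails here. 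The paper's fix is different: with respect to a $W$-invariant unitary form each generator acts as a \emph{normal} operator, so its adjoint is a polynomial in it, and therefore any subspace invariant under $\mathsf{U}\HH(\bula)$ has invariant orthogonal complement; reductivity follows. Relatedly, your identification of the center with the span of the class sums $T_c(\bula)$ requires $Z(\HH(\bula))\subset Z(\kk W)$, i.e.\ that $\HH(\bula)$ generates $\kk W$ as an algebra, which again needs the condition $\bula\notin\bigcup\mathcal{L}_1$ and the interpolation machinery you have not set up.
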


\medskip

We then establish the connection between $\HH(\bula)$
and the Zariski closure of the (pure) braid groups inside
the Hecke algebra associated to $W$, and finally investigate
several interesting cases for $\bula \not\in \AAA(W)^{\times}$ :
\begin{enumerate}
\item for the Lie algebra $\HH_s$, whose parameter $\bula$
does not always belong to $\AAA(W)^{\times}$ ;
\item  for the exceptional groups $G_4,G_{25}$, which are
connected to the usual braid groups, we investigate the structure
of $\HH(\bula)$ for arbitrary $\bula \in \AAA(W)$ ;
\item when $\bula$ satisfies $\la^c_i = \la^c_j$, which corresponds
to the `spetsial' Hecke algebra of \cite{BMM}, we investigate
$\HH(\bula)$ for the groups $G(d,1,r)$.
\end{enumerate}

\def\HHU{\HH(\mathcal{U})}
\def\HHTU{\HH(\tilde{\mathcal{U}})}

It has been noted in \cite{IH2} that
the infinitesimal Hecke algebras for 2-reflection groups
are actually generated by any set of reflections that
generate the groups, e.g. a set of simple reflections
for $W$ a finite Coxeter groups. Other generalizations
to consider are then, for $\mathcal{U}$ a subset of $\SS$
that generates the group and $\tilde{\mathcal{U}} = \{ s^k \ | \ 
s^k \neq 1, s \in \mathcal{U} \} \subset \RR$,
the Lie algebras $\HH(\mathcal{U})$ and $\HH(\tilde{\mathcal{U}})$
generated by these subsets. Both Lie algebras are
reductive. We have $\HHU \subset \HH_s$,
and in general $\HH_s' \neq \HH'$, so $\HHTU$ is more likely
to be considered as a natural generalization than $\HHU$.
Our last result
on this topic is the following

\begin{theointr} If $\mathcal{U} \subset \mathcal{S}$ generates
$W$, then $\HHU$ and $\HHTU$ are reductive, and $\HHTU' = \HH'$.
\end{theointr}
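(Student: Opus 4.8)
The plan is to deduce both reductivity assertions from a single criterion, and then to obtain $\HHTU'=\HH'$ by proving that $\HHTU$ is stable under conjugation by $W$, which will force it to contain every reflection.

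\emph{Reductivity.} I would first isolate the criterion: if $A$ is a finite-dimensional semisimple associative $\C$-algebra and $\g\subseteq A$ is a Lie subalgebra that generates $A$ as an \emph{associative} algebra, then $\g$ is reductive. Indeed, writing $A\simeq\prod_i\End(M_i)$ over the simple modules $M_i$, the hypothesis together with the Jacobson density theorem forces each $M_i$ to be already irreducible as a $\g$-module (any $\g$-submodule is stable under the associative algebra generated by $\g$, which is all of $\End(M_i)$); hence $\bigoplus_i M_i$ is a faithful semisimple representation of $\g$, and a Lie algebra admitting a faithful semisimple finite-dimensional representation is reductive. Now both $\mathcal U$ and $\tilde{\mathcal U}$ generate $W$ as a group, hence generate $\C W$ as an associative algebra (in the finite group $W$ every element is a positive word in $\mathcal U$, since inverses are positive powers). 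As $\C W$ is semisimple, this gives at once that $\HHU$ and $\HHTU$ are reductive.

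\emph{The key step.} The inclusion $\HHTU\subseteq\HH$ is clear: each $s^k\in\tilde{\mathcal U}$ is again a reflection, so $\tilde{\mathcal U}\subseteq\RR$ and $\HHTU'\subseteq\HH'$. The heart of the proof is to show that $\HHTU$ is stable under $\Ad(w)\colon x\mapsto wxw^{-1}$ for all $w\in W$, for which it suffices to treat $\Ad(s)$ with $s\in\mathcal U$. Let $e$ be the order of $s$ and let $L_s,R_s$ denote left and right multiplication by $s$ on $\C W$. These commute, are simultaneously diagonalizable with eigenvalues among the $e$-th roots of unity, and on the joint eigenspace where $L_s=\zeta^a$, $R_s=\zeta^b$ (with $\zeta$ a fixed primitive $e$-th root of unity) one has $\ad(s^k)=\zeta^{ak}-\zeta^{bk}$ while $\Ad(s)=\zeta^{a-b}$. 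By the linear independence of the characters of $\Z/e$, the tuple $(\zeta^{ak}-\zeta^{bk})_{1\le k\le e-1}$ determines the scalar $\zeta^{a-b}$; interpolating over the finitely many eigenvalue-tuples that occur yields a polynomial $P$ with $\Ad(s)=P(\ad(s),\ad(s^2),\dots,\ad(s^{e-1}))$ as operators on $\C W$. Since each $s^k$ lies in $\HHTU$, every $\ad(s^k)$ preserves the subspace $\HHTU$, and therefore so does $P(\ad(s),\dots,\ad(s^{e-1}))=\Ad(s)$. Thus $\HHTU$ is $\Ad(s)$-stable for every $s\in\mathcal U$, hence $\Ad(W)$-stable. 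This $\Ad(W)$-stability is exactly where having \emph{all} the powers $s^k$ available is indispensable, and explains why the analogous statement fails for $\HH_s$, whose generators retain only one reflection per hyperplane.

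\emph{Conclusion.} Because $\mathcal U$ generates the irreducible group $W$, its reflections meet every orbit of reflecting hyperplanes: a missed orbit would confine $\langle\mathcal U\rangle$ to the kernel of a nontrivial linear character, as one reads off the abelianization of $W$. Since the reflections fixing a given hyperplane form a single cyclic group, every $r\in\RR$ can be written $r=w s^k w^{-1}$ with $w\in W$, $s\in\mathcal U$, $1\le k<e$. Then $s^k\in\tilde{\mathcal U}\subseteq\HHTU$ and the $\Ad(W)$-stability give $r=\Ad(w)(s^k)\in\HHTU$, so $\RR\subseteq\HHTU$ and hence $\HH\subseteq\HHTU$. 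Combined with the reverse inclusion this yields $\HHTU=\HH$, and in particular $\HHTU'=\HH'$. The main obstacle is thus concentrated in the operator identity of the key step; the reflection-group input that $\mathcal U$ reaches every hyperplane orbit is the one place where irreducibility of $W$ is genuinely used.
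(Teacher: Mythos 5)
Your argument runs on the same engine as the paper's proof: the identity expressing $\Ad(s)$ as a polynomial in the commuting operators $\ad(s^k)$, which, combined with the fact that $\mathcal{U}$ generates $W$, yields $\Ad(W)$-stability. Within that common skeleton you make two genuinely different choices. For the identity itself, the paper invokes its theorem~\ref{polynome}, whose proof (the entire last section, via cyclotomic fields) produces a \emph{rational} polynomial depending only on the order of $s$; your spectral argument --- simultaneous diagonalization of left and right multiplication, the observation that the tuple $(\zeta^{ak}-\zeta^{bk})_{1\le k<e}$ determines $\zeta^{a-b}$ by linear independence of characters, then interpolation --- is correct and much lighter, and the uniformity and rationality you lose are not needed for this application (a polynomial identity with coefficients in $\C$ still propagates stability of the $\kk$-subspace $\HHTU$, since $\Ad(s)$ is itself defined over $\kk$). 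For the endgame, the paper stabilizes only $\HHTU'$ and then uses the $W$-equivariant projection $p\colon \kk W \onto (\kk W)'$ killing $Z(\kk W)$: $\HH$ is generated by the conjugates $\HH(w\tilde{\mathcal{U}}w^{-1})$, each of which projects onto $\Ad(w)(\HHTU')=\HHTU'$, whence $\HH'=\HHTU'$. You instead show $\RR\subset\HHTU$ and deduce the stronger equality $\HHTU=\HH$; under the theorem's hypotheses this is true, and your route avoids the bookkeeping with centers. Your reductivity criterion is essentially the content of the cited prop.\ 2.2 of \cite{IH2}, so that part matches the paper. (One small slip: reaching every hyperplane orbit uses only generation and Stanley's theorem, not irreducibility of $W$.)

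The step you cannot leave as stated is: ``every $r\in\RR$ can be written $ws^kw^{-1}$ with $s\in\mathcal{U}$.'' Cyclic fixers plus ``every orbit is met'' do not suffice: one needs each $s\in\mathcal{U}$ to \emph{generate} the cyclic fixer of its hyperplane, and a generating set of reflections need not have this property when some $e_C$ is not a prime power. In $W=G(6,1,r)$, the set consisting of $t^2$, $t^3$ (for $t$ a primitive diagonal reflection) together with the transpositions generates $W$, yet no conjugate of a power of these elements equals $t$: conjugation preserves the nontrivial eigenvalue, and only $\zeta_6^2,\zeta_6^3,\zeta_6^4$ and $-1$ occur among those conjugates. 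What rescues your claim is the standing hypothesis $\mathcal{U}\subset\SS$ with $\SS$ stable under $W$-conjugation and containing exactly one reflection per hyperplane: then two elements of $\SS$ whose hyperplanes are $W$-conjugate are themselves $W$-conjugate, so all elements of $\SS$ over an orbit $C$ have determinant $\zeta_C^{j_C}$ for a single exponent $j_C$; since the images of $\mathcal{U}$ must generate the factor $\Z/e_C\Z$ of the abelianization (Stanley), $\gcd(j_C,e_C)=1$, i.e.\ each $s\in\mathcal{U}$ is primitive, and your conclusion follows. To be fair, the paper elides the same point under the words ``by Stanley theorem,'' but in a blind write-up this is the one real gap, and it is exactly where the hypotheses on $\SS$ --- rather than mere generation by reflections --- enter the proof.
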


In order to get this result, we study the following situation.
For a finite group $G$, there are natural Lie-theoretic
endomorphisms of $\End(\C G)$ associated to $g \in G$,
namely $\ad(g) : x \mapsto gx -xg$ and $\Ad(g) : x \mapsto g x g^{-1}$.
The technical result proved in the final section is the following.

\begin{theointr} Let $k \geq 1$. There exists a rational polynomial $P
\in \Q[X_1,\dots,X_{k-1}]$ such that,
for every finite group $G$ and $g \in G$ of order $k$, then $\Ad(g) = P(\ad(g),\ad(g^2),\dots,\ad(g^{k-1}))$.
Moreover $P$ can be taken in $\Q[X_1]$ if $k$ is odd or equal to 2.
\end{theointr}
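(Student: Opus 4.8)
The plan is to simultaneously diagonalize the two commuting operators $L,R\in\End(\C G)$ given by left and right multiplication by $g$, namely $L:x\mapsto gx$ and $R:x\mapsto xg$. Since $g^k=1$ we have $L^k=R^k=\id$ and $LR=RL$, so $L,R$ are simultaneously diagonalizable with eigenvalues among the $k$-th roots of unity $\zeta^a$, where $\zeta=\exp(2\pi\ii/k)$. As $\ad(g^j)=L^j-R^j$ and $\Ad(g)=LR^{-1}$ all belong to the commutative algebra generated by $L$ and $R$, the asserted identity holds as an operator identity if and only if it holds on each common eigenspace. On the $(\zeta^a,\zeta^b)$-eigenspace $\ad(g^j)$ acts by the scalar $\zeta^{ja}-\zeta^{jb}$ and $\Ad(g)$ by $\zeta^{a-b}$, so it suffices to exhibit $P\in\Q[X_1,\dots,X_{k-1}]$ with $P(\zeta^a-\zeta^b,\dots,\zeta^{(k-1)a}-\zeta^{(k-1)b})=\zeta^{a-b}$ for every $(a,b)\in(\Z/k)^2$; a single such $P$ then works for all $G$ and all $g$ of order $k$, since the argument only uses $LR=RL$ and $L^k=R^k=\id$ and is insensitive to the ambient module.

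First I would check that the target is a well-defined function of the point. Writing $c_j=\zeta^{ja}-\zeta^{jb}$, with $c_0=0$, the inverse discrete Fourier transform gives $\frac1k\sum_{j=0}^{k-1}\zeta^{-jm}c_j=\delta_{a,m}-\delta_{b,m}$, which recovers both $a$ and $b$ from the tuple $(c_1,\dots,c_{k-1})$ when $a\neq b$, while $a=b$ is exactly the case where all $c_j$ vanish and $\zeta^{a-b}=1$. Hence $(c_1,\dots,c_{k-1})\mapsto\zeta^{a-b}$ is a well-defined function on the finite set of points arising from pairs, and Lagrange interpolation produces some $P_0\in\C[X_1,\dots,X_{k-1}]$ realizing it. Alternatively one can write $P_0$ down explicitly from the Fourier projectors $\pi^L_m-\pi^R_m=\frac1k\sum_{j=1}^{k-1}\zeta^{-jm}\ad(g^j)$.

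The rationality of the coefficients would then be obtained by Galois descent, which I expect to be the conceptual core of the argument. For $\sigma\in\Gamma=\Gal(\Q(\zeta)/\Q)$ with $\sigma(\zeta)=\zeta^t$, applying $\sigma$ to the defining identity of $P_0$ and reindexing by the bijection $(a,b)\mapsto(ta,tb)$ of $(\Z/k)^2$ shows that the conjugate polynomial $P_0^{\sigma}$ satisfies exactly the same interpolation conditions. Consequently the average $P=|\Gamma|^{-1}\sum_{\sigma\in\Gamma}P_0^{\sigma}$ still satisfies $P(\zeta^a-\zeta^b,\dots)=\zeta^{a-b}$ for all $(a,b)$, and has $\Gamma$-fixed, hence rational, coefficients; this $P$ settles the first assertion.

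For the refinement that $P$ may be taken in $\Q[X_1]$ when $k$ is odd, the case $k=2$ being immediate with $\Ad(g)=\id-\frac{1}{2}\ad(g)^2$, it remains to show that $\zeta^{a-b}$ is already determined by the single value $u=\zeta^a-\zeta^b$. From $|u|^2=2-2\cos(2\pi(a-b)/k)$ one recovers $a-b$ up to sign, so the only possible obstruction is a coincidence $\zeta^a-\zeta^b=\zeta^c-\zeta^d$ with $c-d\equiv-(a-b)$ and $a\neq b$; setting $t=a-b$ this forces $\zeta^b(\zeta^t-1)=-\zeta^c(\zeta^t-1)$, i.e. $\zeta^b=-\zeta^c$, which is impossible when $k$ is odd since $-1$ is not then a $k$-th root of unity. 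I expect this sign analysis, i.e. ruling out accidental equalities of two-term differences, to be the main technical obstacle; once it is settled the one-variable function $u\mapsto\zeta^{a-b}$ is well defined, and the same interpolation-plus-Galois-averaging argument yields the required $P\in\Q[X_1]$.
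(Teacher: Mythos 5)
Your proof is correct, and it reaches the theorem by a genuinely different route than the paper. The common part is the initial reduction: decomposing $\C G$ into joint eigenspaces of left and right multiplication by $g$ is the same as the paper's reduction to irreducible $\langle g \rangle$-bimodules, and both arguments thereby reduce the theorem to producing a single $P\in\Q[X_1,\dots,X_{k-1}]$ with $P(\zeta^a-\zeta^b,\dots,\zeta^{(k-1)a}-\zeta^{(k-1)b})=\zeta^{a-b}$ for all $(a,b)\in(\Z/k)^2$. From there the paths diverge. The paper proves the equivalent intrinsic statement that $XY^{k-1}$ lies in the unital $\Q$-subalgebra of $\Q[X,Y]/(X^k-1,Y^k-1)$ generated by the $X^j-Y^j$ (proposition \ref{propQXY}), working entirely over $\Q$: it identifies the degree-zero subalgebra with $\prod_{d\mid k}\Q(\mu_d)$, shows that suitable powers of the generators, such as $(Z-1)^{da}$ with $Z=XY^{-1}$, generate each cyclotomic factor (lemma \ref{lemgalzeta}, with a separate and delicate treatment of the even cases), and glues the factors together with Goursat-type lemmas on subalgebras of products of cyclotomic fields (lemmas \ref{lemK2} and \ref{lemKbruhat}). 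You bypass all of this arithmetic: you interpolate at the finitely many points and descend from $\Q(\zeta)$ to $\Q$ by averaging over $\Gal(\Q(\zeta)/\Q)$, the reindexing $(a,b)\mapsto(ta,tb)$ showing that each conjugate $P_0^{\sigma}$ satisfies the same interpolation conditions. Your two well-definedness checks --- the Fourier recovery of $(a,b)$ from the full tuple, and the computation $\zeta^b=-\zeta^c$ ruling out coincidences when $k$ is odd --- are precisely what replaces the paper's generation lemma; your sign analysis plays the same role as the $\sin(\alpha\pi/d)=\sin(\pi/d)$ argument inside the proof of lemma \ref{lemgalzeta}. The trade-off: your argument is shorter and more elementary, while the paper's method produces structural facts about cyclotomic fields and their products that are reusable elsewhere. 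Two small repairs to your write-up: carry out the Lagrange interpolation over $\Q(\zeta)$ rather than $\C$ (this is possible since all points and values lie in $\Q(\zeta)$, and it is needed for the conjugate polynomial $P_0^{\sigma}$ to even be defined); and note explicitly that the operator identity proved on $\C G$ restricts to $\Q G$, since all operators involved preserve $\Q G$. Your formula $\Ad(g)=\Id-\frac{1}{2}\ad(g)^2$ for $k=2$ is correct.
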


The applications of these structure results on the Zariski
closures of braid groups in their
corresponding Hecke algebras, are discussed
in \S 5 (see theorem \ref{theozarheck}). In \S 6 we prove that, for convenient
values of the parameters and, if $W$ is irreducible, with the possible exception
of a finite number of types, the representation of the Hecke
algebra are unitarizable as representations of the corresponding
braid group $B$ (theorem \ref{theounit} and its corollary), and we relate
the Lie algebra of the topological closure of $B$ in these compact cases
with the Lie algebras introduced here.

\medskip

\noindent {\bf Acknowledgements.} I thank D. Juteau, J.-F. Planchat
and K. Sorlin for useful discussions as well as E. Opdam for comments
on a first version of this work.

\section{Infinitesimal Hecke algebras}

Let $W$ be a (pseudo-)reflection group, $\RR$ its set of
(pseudo-)reflections and $\mathcal{C}$ the set of
conjugacy classes of reflection hyperplanes. 
We consider $\mathcal{S} \subset \mathcal{R}$
such that $\mathcal{S}$ is stabilized by conjugation
under $W$ and generates $W$ as a group. We will show below that a
consequence of this assumption and of Stanley's theorem (see \cite{STANLEY}, theorem 3.1)
is that
$$
\forall s \in \RR \ \exists s_0 \in \SS \ \exists r \geq 1 \ s = s_0^r.
$$

Let $\kk$ be a field of characteristic 0. We introduce the
Lie subalgebra $\mathcal{H}(\mathcal{S})$ of $\kk W$ generated
by $\mathcal{S}$ and, for $c$ a conjugacy class of $W$,
we let $T_c = \sum_{w \in c} w \in \kk W$.

We recall from \cite{IH2} the following.
\begin{prop} (see \cite{IH2} prop. 2.2)
The Lie algebra $\mathcal{H}(\mathcal{S})$
is reductive, and a basis of its center is given by the $T_c$ for $c \subset
\mathcal{S}$. Every irreducible representation of $W$ induces an
irreducible representation of $\HH(\SS)$. The derived Lie algebra $\mathcal{H}(\mathcal{S})'$
is generated by the $s' = s- (T_c)/(\# c)$ for $s \in c$ a conjugacy
class included in $\mathcal{S}$.
\end{prop}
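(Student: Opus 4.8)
The plan is to exploit the fact that $\SS$ generates $W$ as a group, which forces the associative subalgebra of $\kk W$ generated by $\SS$ to be all of $\kk W$, together with the conjugation-averaging projection onto the center of $\kk W$.

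First I would treat reductivity and the irreducibility statement together. After extending scalars to an algebraic closure (reductivity is insensitive to this in characteristic $0$), write $\kk W \cong \bigoplus_{\rho} \End(V_\rho)$ and let $\HH(\SS)$ act on $M = \bigoplus_\rho V_\rho$. This representation is faithful because the regular representation is. I claim it is semisimple: if $U \subseteq V_\rho$ is stable under $\HH(\SS)$, then it is stable under every $\rho(s)$, $s \in \SS$; since each $\rho(s)$ is invertible and $U$ is finite-dimensional, $\rho(s) U = U$ and hence $\rho(s)^{-1}U = U$, so $U$ is stable under the group $\langle \rho(s) : s \in \SS\rangle = \rho(W)$, whence $U \in \{0, V_\rho\}$ by irreducibility of $\rho$. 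Thus each $V_\rho$ is $\HH(\SS)$-irreducible --- which is exactly the asserted statement that the restriction to $\HH(\SS)$ of every irreducible representation of $W$ is irreducible --- and $M$ is semisimple. Since a Lie algebra admitting a faithful semisimple finite-dimensional representation is reductive, the first claim follows.

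For the center, the key tool is the projection $p : \kk W \to \kk W$, $p(x) = |W|^{-1}\sum_{w \in W} wxw^{-1}$, a $\kk$-linear retraction onto $Z(\kk W)$ which restricts to the identity on the center. Since $p$ sends a group element $g$ to $T_{\mathrm{cl}(g)}/\#\mathrm{cl}(g)$ and $gh$, $hg$ are always conjugate in $W$, bilinearity gives $p(xy) = p(yx)$ for all $x,y$; in particular $p$ annihilates every commutator, so $p(\HH(\SS)') = 0$ and $p(\HH(\SS)) = p(\mathrm{span}\,\SS) = \mathrm{span}\{T_c : c \subset \SS\}$, using $p(s) = T_c/\#c$ for $s \in c \subset \SS$. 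Next I would establish $Z(\HH(\SS)) = Z(\kk W) \cap \HH(\SS)$: an element central in $\HH(\SS)$ commutes with every $s \in \SS$, and because $\SS$ generates $W$ it commutes with every element of $\kk W$, i.e. lies in $Z(\kk W)$, the reverse inclusion being immediate. Given $z \in Z(\HH(\SS))$ one then has $z = p(z) \in p(\HH(\SS)) = \mathrm{span}\{T_c : c \subset \SS\}$, while conversely each such $T_c$ is central in $\kk W$ and lies in $\HH(\SS)$. As the $T_c$ are supported on disjoint sets of group elements they are linearly independent, so they form a basis of the center, which therefore has dimension $|\SS/W|$.

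Finally, comparing dimensions gives $\HH(\SS)' = \ker p \cap \HH(\SS)$: both have dimension $\dim\HH(\SS) - |\SS/W|$, and $\HH(\SS)' \subseteq \ker p$. Each $s' = s - T_c/\#c$ lies in this intersection since $p(s') = 0$, so $s' \in \HH(\SS)'$; and because $T_c$ is central, $[s',t] = [s,t]$ for all $t$, so the iterated brackets of length $\geq 2$ in the $s'$ coincide with those in the $s$ and already span $\HH(\SS)'$. Hence the $s'$ generate $\HH(\SS)'$. The one step requiring care --- and the conceptual heart of the argument --- is the identification $Z(\HH(\SS)) = Z(\kk W) \cap \HH(\SS)$, where the group-generation hypothesis on $\SS$ is precisely what upgrades ``commutes with the generators'' to ``commutes with all of $\kk W$''; everything else is linear algebra organized around the projection $p$.
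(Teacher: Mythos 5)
Your proof is correct and follows essentially the same route as the paper's: this proposition is quoted from \cite{IH2} without proof here, but the identical technique reappears in the paper's Section 4 argument for $\HH(\bula)$ --- a faithful semisimple representation (with irreducibility of each $V_\rho$ coming from invertibility of the generators and the fact that $\SS$ generates $W$) gives reductivity, and the conjugation-averaging projection $p$ combined with $\HH(\SS)=\mathrm{span}(\SS)+\HH(\SS)'$ and $\HH(\SS)\cap\Ker p=\HH(\SS)'$ yields the basis $\{T_c\}$ of the center and the generators $s'$ of the derived algebra. The only minor divergence is that in the $\HH(\bula)$ setting the paper must prove semisimplicity via unitarity (normal operators), because there the generators are non-invertible linear combinations, whereas your invertibility argument is available --- and handles irreducibility directly --- precisely because the generators here are group elements.
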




We say that a reflection is \emph{primitive} if it generates the fixer
of its reflecting hyperplane.


We assume that $\kk$ is a field such that every ordinary representation
of $W$ is realizable over $\kk$ (this means that $\kk$ contains
the so-called field of definition of $W$, by \cite{benard,bessis}).
Without loss of generality,
we can assume that $\kk$ is a number field, with $\kk \subset \C$.
Since $\mathcal{H}(\SS) \subset \kk W$, every such representation 
$\rho$ of $W$ induces a representation $\rho_{\mathcal{H}(\SS)}$ of $\mathcal{H}(\SS)$.
Similarly, its restriction to $\mathcal{H}(\SS)'$ is denoted $\rho_{\mathcal{H}(\SS)'}$.

We let $\Irr(\rho)$ denote the set of irreducible representations of $W$,
and define for $\rho \in \Irr(W)$ the set
$$
\mathrm{X}_{\mathcal{S}}(\rho) =
\{ \eta \in \Hom(W,\kk^{\times}) \ | \ \forall s \in \mathcal{S} \  \ 
\eta(s) \neq 1 \Rightarrow \rho(s) \in \kk^{\times} \}.
$$

\begin{prop}
\begin{enumerate}
\item For any $s \in \RR$ there exists $s_0 \in \SS$ and $r \geq 1$ such
that $s = s_0^r$.
\item 
For any $\rho \in \Irr(W)$, $\XX(\rho) = \XX_{\SS}(\rho)$
does not depend on $\SS$. 
\end{enumerate}
\end{prop}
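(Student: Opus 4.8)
\emph{Plan.} The two parts are of very different natures: part~(1) is the arithmetic heart of the statement, while part~(2) drops out of it formally. I begin with~(1). The starting point is the classical fact (Shephard--Todd, Steinberg) that for each reflecting hyperplane $H$ the pointwise fixer $W_H \leq W$ is cyclic; writing $g_H$ for a \emph{primitive} reflection generating $W_H$ and $e_H$ for its order, every element of $\RR$ with hyperplane $H$ is $g_H^k$ for some $1 \leq k \leq e_H - 1$. Consequently part~(1) is equivalent to the assertion that, for each $H$, the set $\SS \cap W_H$ contains a generator of $W_H$: once some $s_0 = g_H$ lies in $\SS$ every $g_H^k$ is a power of $s_0$, and conversely $g_H$ itself must be written as a power of an element of $\SS$. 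In the situation where $s \mapsto \Ker(s-1)$ is injective on $\SS$, so that $\SS \cap W_H = \{s_H\}$ is a single reflection $s_H = g_H^{j_H}$, this is exactly the primitivity of $s_H$, i.e. $\gcd(j_H, e_H) = 1$.

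To produce such a generator I would pass to the abelianization and use Stanley's theorem. By \cite{STANLEY}, Theorem~3.1, the $\chi$-relative invariants of $W$ form a free module of rank one over the invariant ring, generated by $\prod_H \ell_H^{a_H(\chi)}$, where $\ell_H$ defines $H$ and $a_H(\chi) \in \{0,\dots,e_H-1\}$ is fixed by $\chi(g_H) = \det(g_H)^{a_H(\chi)}$. Read orbit by orbit, this identifies $\Hom(W,\kt)$ with the product $\prod_{c \in \RR/W} \mu_{e_c}$ of groups of roots of unity, one cyclic factor per orbit of reflections, on which $\chi$ acts through its value on a primitive reflection; in particular a linear character is prescribed \emph{freely and independently} on each orbit, and $W^{\mathrm{ab}} \cong \prod_{c} \Z/e_c$. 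The generation hypothesis now forces primitivity. Since $\SS$ is $W$-stable the exponent $j_H$ depends only on the orbit $c$ of $H$, say $j_H = j_c$. If $d := \gcd(j_c, e_c) > 1$ for some $c$, I would define $\eta \in \Hom(W,\kt)$ by setting $\eta(g_H)$ equal to a root of unity of order $d$ on the factor indexed by $c$ and trivial on all other factors; then $\eta(s_H) = \eta(g_H)^{j_c} = 1$ for every $H$, yet $\eta \neq 1$, contradicting the fact that $\SS$ generates $W$ (a fortiori, that the images of $\SS$ generate $W^{\mathrm{ab}}$). Hence $\gcd(j_c, e_c) = 1$ for every orbit, each $s_H$ is primitive, and every reflection $g_H^k = s_H^{\,r}$ with $r \equiv k\, j_c^{-1} \pmod{e_c}$. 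This establishes~(1).

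Part~(2) is then formal. I would compare $\XX_{\SS}(\rho)$ with the manifestly $\SS$-independent set $\XX(\rho) = \{\eta \in \Hom(W,\kt) \mid \forall t \in \RR,\ \eta(t) \neq 1 \Rightarrow \rho(t) \in \kt\}$, built from all of $\RR$. The inclusion $\XX(\rho) \subseteq \XX_{\SS}(\rho)$ is immediate since $\SS \subseteq \RR$. For the reverse, take $\eta \in \XX_{\SS}(\rho)$ and $t \in \RR$ with $\eta(t) \neq 1$; by~(1) write $t = s_0^{\,r}$ with $s_0 \in \SS$. Then $\eta(s_0)^r = \eta(t) \neq 1$ forces $\eta(s_0) \neq 1$, so $\rho(s_0) \in \kt$ is scalar by hypothesis, whence $\rho(t) = \rho(s_0)^r \in \kt$. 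Thus $\XX_{\SS}(\rho) = \XX(\rho)$, which does not involve $\SS$.

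The routine steps are the reduction via cyclicity of $W_H$ and the sandwich argument for~(2). \emph{The main obstacle is the middle paragraph}: correctly extracting from Stanley's Theorem~3.1 the product decomposition $\Hom(W,\kt) \cong \prod_{c} \mu_{e_c}$ (the freeness, i.e. surjectivity onto each factor, is what permits the isolating character), and, if one insists on the loosest reading of the hypotheses on $\SS$, checking that $\SS \cap W_H$ genuinely contains a single primitive element rather than merely generating $W_H$.
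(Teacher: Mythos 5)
Your route is essentially the paper's: both proofs read Stanley's theorem as saying that restriction to the cyclic fixers $W_H$ identifies $\Hom(W,\kt)$ (equivalently the abelianization of $W$) with a product of cyclic groups, one factor of order $e_C$ for each conjugacy class $C$ of hyperplanes, with primitive reflections going to generators; generation by $\SS$ is then played against this decomposition, and $W$-stability is used to line up the relevant element of $\SS$ with the hyperplane of $s$. Your part (2) is also the paper's argument up to packaging: you compare $\XX_{\SS}(\rho)$ with $\XX_{\RR}(\rho)$, the paper compares $\XX_{\SS_1}(\rho)$ with $\XX_{\SS_2}(\rho)$; both are the same contrapositive computation via (1). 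One imprecision to fix: your product must be indexed by orbits of hyperplanes, not by $\RR/W$ (orbits of reflections), and the order of each factor is the order of the fixer, not of a chosen reflection in it — in $G(6,1,1)$ below, $\RR/W$ has five elements while $\Hom(W,\kt)$ is a single cyclic factor of order $6$.

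The real issue is the one you flag in your last sentence, and it is a genuine gap, not a routine verification. Your middle paragraph presupposes that $\SS\cap W_H$ is a single reflection $g_H^{j_c}$ (otherwise ``the exponent $j_H$'' is undefined); if $\SS\cap W_H$ has several elements, the isolating-character argument only shows that the exponents occurring generate $\Z/e_c\Z$ jointly, not that one of them is prime to $e_c$. And under the bare hypotheses of Section 2 (stability under conjugation and generation of $W$) the statement is actually false: take $W=G(6,1,1)=\Z/6=\langle g\rangle$ acting on $\C$, where every nontrivial element is a pseudo-reflection, and $\SS=\{g^2,g^3\}$. This $\SS$ is conjugation-stable ($W$ is abelian) and generates $W$ (as $g=g^3(g^2)^{-1}$), yet $g\in\RR$ is a power of neither $g^2$ nor $g^3$, so (1) fails. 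You should know, however, that the paper's own proof jumps over exactly the same point, in the sentence ``Since $\SS$ generates $W$, there exists $s_0 \in \SS$ and $r \geq 1$ with $s_0^r$ having the same image as $s$'': a finite cyclic group can be generated by a set none of whose single elements generates it. Both your proof and the paper's become complete precisely when one adds the hypothesis — stated for $\HH_s$ in the introduction but dropped in Section 2 — that $\SS$ contains a generator of each fixer $W_H$ (e.g. the bijection condition on $s\mapsto\Ker(s-1)$, or $\SS=\RR$, or $\SS$ the distinguished reflections, which are the cases the paper actually uses later). So your attempt is no less complete than the paper's; the honest conclusion is that the proposition needs that extra hypothesis, and that your ``obstacle'' is exactly where it enters.
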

\begin{proof}
For $C$ of conjugacy class of hyperplanes in $W$ we denote $e_C$ the
order of the fixer of an hyperplane in $C$. By Stanley's theorem
there exists an isomorphism $\Hom(W,\C^{\times}) \simeq \prod_{C} \Z/e_C \Z$
where a primitive pseudo-reflection around $H$ in $C$ is mapped
to a generator of $\Z/e_C \Z$. It follows that every pseudo-reflection
$s \in \RR$ around such an $H$ is mapped to some $x \in \Z/e_C \Z$.
Since $\SS$ generates $W$, there exists $s_0 \in \SS$ and $r \geq 1$
with $s_0^r$ having the same image as $s$ in $\Z/e_C \Z$. Since
$\SS$ is stable by conjugation we can assume that $s_0$ fixes
the same hyperplane as $s$ hence $s_0^r = s$. This proves (1).
We let $\SS_1,\SS_2 \subset \RR$, $\rho \in \Irr(W)$ and $\eta \in \XX_{\SS_1}(\rho)$.
Let $s \in \SS_2$ such that $\rho(s) \not\in \kt$. By (1) we have $r \geq 1$
and $s_0 \in \SS_1$ such that $s = s_0^r$, so $\rho(s) \not\in \kt$
implies $\rho(s_0) \not\in \kt$, hence $\eta(s_0) = 1$ and $\eta(s) = 1$.
It follows that $\XX_{\SS_1}(\rho) \subset \XX_{\SS_2}(\rho)$
and $\XX_{\SS_1}(\rho) = \XX_{\SS_2}(\rho)$ whence (2).
\end{proof}

For $\mathcal{U} \subset \RR$ not necessarily stable under
conjugation, we may consider the Lie subalgebra $\HHU$ of $\kk W$ generated
by $\mathcal{U}$. Letting $\tilde{\mathcal{U}} = \{ s^k \in \RR \ | \ s \in \mathcal{U}, k \geq 1 \}$,
we have the following, which shows that the stability under
$W$ plays a role mainly for the center.

\begin{prop} If $\mathcal{U}$ generates $W$, then $\HHU$ and
$\HHTU$ are reductive, and $\HHTU' = \HH'$.
\end{prop}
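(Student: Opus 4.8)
The plan is to handle reductivity first, then the easy inclusion $\HHTU' \subseteq \HH'$, and finally the reverse inclusion, which rests entirely on showing that $\HHTU$ is stable under $\Ad(W)$ — and it is here that the final technical theorem is meant to be used.

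For reductivity I would isolate the only property of the generating set that matters, namely that it generates $W$. If $X \subseteq W$ is any generating set and $\mathfrak{g}_X \subseteq \kk W$ is the Lie subalgebra it generates, then for each $\rho \in \Irr(W)$ a subspace of $V_{\rho}$ is $\mathfrak{g}_X$-stable if and only if it is stable under every $\rho(x)$, $x \in X$ (stability under a generating set of a Lie algebra propagates to all iterated brackets), if and only if it is $W$-stable (the operators $\rho(x)$ are invertible and $X$ generates $W$). Since $V_{\rho}$ is $W$-irreducible it is $\mathfrak{g}_X$-irreducible, so $\bigoplus_{\rho} V_{\rho}$ is a faithful semisimple $\mathfrak{g}_X$-module, and a linear Lie algebra possessing a faithful semisimple module is reductive. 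Applying this to $X = \mathcal{U}$ and to $X = \tilde{\mathcal{U}}$ (both generate $W$, as $\mathcal{U} \subseteq \tilde{\mathcal{U}}$) yields reductivity of $\HHU$ and $\HHTU$; this is exactly the mechanism behind the Proposition quoted from \cite{IH2}.

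Since $\tilde{\mathcal{U}} \subseteq \RR$ we have $\HHTU \subseteq \HH$, hence $\HHTU' = [\HHTU,\HHTU] \subseteq [\HH,\HH] = \HH'$. For the reverse inclusion the strategy is to prove that $\HHTU$ is stable under $\Ad(w)$ for all $w \in W$. Granting this, $\HHTU$ contains $\Ad(w)(s^k) = (w s w^{-1})^k$ for every $s \in \mathcal{U}$, $w \in W$ and admissible $k$, that is, it contains the set $\widetilde{\widehat{\mathcal{U}}} = \{ t^k \in \RR \mid t \in \widehat{\mathcal{U}}\}$, where $\widehat{\mathcal{U}}$ denotes the conjugation-closure of $\mathcal{U}$. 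The crux is therefore this $\Ad(W)$-stability, and it is supplied by the final technical theorem: fixing $s \in \mathcal{U}$ of order $k$, the theorem gives $P \in \Q[X_1,\dots,X_{k-1}]$ with $\Ad(s) = P(\ad(s),\ad(s^2),\dots,\ad(s^{k-1}))$ in $\End(\kk W)$. The operators $\ad(s^j)$ commute pairwise, since by Jacobi $[\ad(s^i),\ad(s^j)] = \ad([s^i,s^j]) = 0$, so the commutative polynomial $P$ is evaluated unambiguously on them; and each $s^j$ with $1 \le j \le k-1$ is a nontrivial power of $s$, hence lies in $\tilde{\mathcal{U}} \subseteq \HHTU$, so $\ad(s^j)$ preserves $\HHTU$. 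Consequently any polynomial in the $\ad(s^j)$, in particular $\Ad(s)$, preserves $\HHTU$; as $\mathcal{U}$ generates $W$, every $\Ad(w)$ preserves $\HHTU$.

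It then remains to identify $\widetilde{\widehat{\mathcal{U}}}$. The set $\widehat{\mathcal{U}}$ is conjugation-stable and generates $W$, so the proof of part (1) of the preceding Proposition applies verbatim with $\widehat{\mathcal{U}}$ in place of $\SS$: every reflection $s \in \RR$ is a power $s_0^r$ of some $s_0 \in \widehat{\mathcal{U}}$ fixing the same hyperplane, whence $s \in \widetilde{\widehat{\mathcal{U}}}$. Thus $\widetilde{\widehat{\mathcal{U}}} = \RR$, so $\HHTU$ contains $\RR$ and in fact $\HHTU = \HH$, giving $\HHTU' = \HH'$ a fortiori. I expect the single genuinely hard step to be the $\Ad(W)$-stability of $\HHTU$: for a generating set not stable under conjugation there is no a priori reason the Lie algebra it generates should be $W$-stable, and it is precisely the polynomial expression of $\Ad(s)$ in terms of the inner derivations $\ad(s^j)$ that forces it.
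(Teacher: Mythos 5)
Your proposal is correct, and it rests on the same three pillars as the paper's own proof: reductivity obtained from the faithful semisimple module $\bigoplus_{\rho} V_{\rho}$ (this is the mechanism the paper invokes via \cite{IH2}, prop.~2.2), Stanley's theorem identifying $\RR$ with the $W$-conjugation closure of $\tilde{\mathcal{U}}$ (the paper asserts exactly this equality), and theorem \ref{polynome} converting stability under the commuting operators $\ad(s^j)$ into stability under $\Ad(s)$. Where you genuinely diverge is the endgame. The paper records $\Ad(s)$-stability only of the derived algebra $\HH(\tilde{\mathcal{U}})'$, and then converts conjugation-invariance into the conclusion by means of the Lie algebra decomposition $\kk W = Z(\kk W) \oplus (\kk W)'$ and the $W$-equivariant projection separating these two ideals: $\HH$ is generated by the conjugate subalgebras $\HH(w\tilde{\mathcal{U}}w^{-1}) = \Ad(w)(\HH(\tilde{\mathcal{U}}))$, each of which projects onto $\Ad(w)(\HH(\tilde{\mathcal{U}})') = \HH(\tilde{\mathcal{U}})'$, whence $\HH'$ is generated by, hence equal to, $\HH(\tilde{\mathcal{U}})'$. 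You instead observe that the identical polynomial argument makes the \emph{whole} of $\HH(\tilde{\mathcal{U}})$ stable under $\Ad(W)$, so that feeding the generators back in gives $w\tilde{\mathcal{U}}w^{-1} \subseteq \HH(\tilde{\mathcal{U}})$ for all $w \in W$, hence $\RR \subseteq \HH(\tilde{\mathcal{U}})$ and the stronger equality $\HH(\tilde{\mathcal{U}}) = \HH$, of which the stated $\HH(\tilde{\mathcal{U}})' = \HH'$ is an immediate corollary. Your route is shorter, dispenses with the projection and all bookkeeping of centers, and proves strictly more: it shows that $Z(\HH(\tilde{\mathcal{U}})) = Z(\HH)$, i.e.\ that the class sums of all of $\RR/W$ already lie in $\HH(\tilde{\mathcal{U}})$, a point the paper's formulation leaves open; what the paper's formulation buys in exchange is the projection technique itself, which it reuses later to determine the centers of the algebras $\HH(\bula)$. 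Two small steps deserve to be made explicit in your write-up: the inclusion $s^j \in \tilde{\mathcal{U}}$ for $1 \leq j \leq k-1$ uses the fact that a nontrivial power of a pseudo-reflection is again a pseudo-reflection (it fixes the same hyperplane), since $\tilde{\mathcal{U}}$ consists by definition of those powers lying in $\RR$; and passing from $\Ad(s)$-stability for $s \in \mathcal{U}$ to $\Ad(w)$-stability for all $w \in W$ requires $\Ad(s)^{-1} = \Ad(s^{-1})$ to preserve $\HH(\tilde{\mathcal{U}})$ as well, which follows, as you indicate, from invertibility of $\Ad(s)$ together with finite-dimensionality of $\HH(\tilde{\mathcal{U}})$.
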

\begin{proof}
We decompose $\kk W = Z(\kk W) \oplus (\kk W)'$ as a Lie algebra.
Then $Z(\kk W)$ and $(\kk W)'$ are ideals of $\kk W$ as an
associative algebra, and there exists a $W$-equivariant
idempotent $p = \kk W \onto Z(\kk W)$. Since $\mathcal{U}$
and $\tilde{\mathcal{U}}$ generate $\HHU$ and $\HHTU$, respectively,
both Lie algebras are reductive (see \cite{IH2} prop. 2.2). Now
$\RR = \{ w s w^{-1} \ | \ s \in \tilde{\mathcal{U}}, w \in W \}$
by Stanley theorem, hence $\HH$ is generated by the $\HH(w \tilde{\mathcal{U}}
w^{-1})$ for $w \in W$. Since $\tilde{\mathcal{U}}$ generates
$W$, $Z(\HHTU) \subset Z(\kk W)$, hence $p(Z(\HHTU)) = \{ 0 \}$.
From the reductiveness of $\HHTU$ we get
$\HHTU = Z(\HHTU) \oplus \HHTU'$, hence
$p(\HHTU) = p(\HHTU') = \HHTU'$, as $\HHTU' \subset (\kk W)'$.
More generally,
$$p(\HH(w\tilde{\mathcal{U}}w^{-1})) = \HH(w\tilde{\mathcal{U}}w^{-1})
 = w \HHTU' w^{-1} = \Ad(w)(\HHTU').
$$
Let now $s \in \mathcal{U}$. The endomorphism $\ad(s^k) \in \End(\kk W)$
stabilizes $\HHTU$ hence $\HHTU'$, as $\HHTU = Z(\HHTU) \oplus \HHTU'$
and $s^k \in \tilde{\mathcal{U}}$. We will prove later (theorem \ref{polynome})
that $\Ad(s)$ can be written as a rational polynomial
in $\ad(s),\dots,\ad(s^k),\dots$, hence $
s \HHTU' s^{-1} 
 \subset \HHTU'$. Since $\Ad(s)$ is invertible, it follows that
$s \HHTU' s^{-1} = \HHTU$. Since $\mathcal{U}$ generates $W$
we get $w \HHTU' w^{-1} = \HHTU'$ for all $w \in W$.
Then $p(\HH(w \tilde{\mathcal{U}} w^{-1})) = \HHTU'$ hence $\HH'$
is generated by $\HHTU'$, that is $\HH' = \HHTU'$. 
\end{proof}

For $\rho \in \Irr(W)$ we let $V_{\rho}$ denote its underlying vector space.
We define the following subsets of $\Irr(W)$.
$$
\begin{array}{l}
\Ref = \{ \rho \in \Irr(W) \ | \ \dim \rho \geq 2 \mbox{ and } \forall s \in \SS \ \rho(s) \neq 1 \Rightarrow
\rho(s) \mbox{ is a reflection } \} \\
\QRef = \{ \eta \otimes \rho \ | \ \rho \in \Ref, \eta \in \Hom(W,\kt) \} \\
\LRef = \{ \eta \otimes \Lambda^k \rho \ | \ \rho \in \Ref, \eta \in \Hom(W,\kt),k \geq 0 \} \\
\end{array}
$$

The statements and proofs of the propositions 2.13 and 2.15 in \cite{IH2} admit a
natural generalization.
\begin{prop} {\ } \\
\begin{enumerate}
\item If $\rho \in \Ref(W)$ and $\eta \in \Hom(W,\kt)$, then
$\Lambda^k \left((\rho \otimes \eta)_{\HH(\SS)'}\right) = (\eta \otimes \Lambda^k \rho)_{\HH(\SS)'}$.
\item If $\rho \in \QRef(W)$ then $\rho(\HH({\SS})') = \sl(V_{\rho})$.
\end{enumerate}
\end{prop}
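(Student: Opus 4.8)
The plan is to reduce everything to the explicit generators $s' = s - T_c/\#c$ of $\HH(\SS)'$ supplied by Proposition 2.2, and to compute their action in each representation. First I would observe that, since $\rho$ is irreducible, Schur's lemma forces $\rho(T_c)$ to be scalar; writing $n = \dim V_\rho$ and using that the trace is a class function, one gets $\rho(T_c) = \#c\,\frac{\tr\rho(s)}{n}\Id$, so that $\rho(s') = \rho(s) - \frac{\tr\rho(s)}{n}\Id$ is exactly the traceless part of $\rho(s)$. In particular every generator lands in $\sl(V_\rho)$, which already gives the inclusion $\rho(\HH(\SS)')\subseteq\sl(V_\rho)$ needed in (2); the same computation shows that for the twist one has $(\rho\otimes\eta)(s') = \eta_c\,\rho(s')$, the character $\eta$ being constant, equal to some $\eta_c$, on the class $c$.

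For (1) I would then compare the two sides on a generator $s'$. The left-hand side is, by definition of the exterior power of a Lie representation, the derivation $D_{(\rho\otimes\eta)(s')}$ on $\Lambda^k V_\rho$, where $D_A(v_1\wedge\dots\wedge v_k) = \sum_i v_1\wedge\dots\wedge Av_i\wedge\dots\wedge v_k$; this equals $\eta_c D_{\rho(s')}$, $D$ is linear, and $D_{\Id} = k\,\Id$. The right-hand side is $\eta_c\bigl(\Lambda^k\rho(s) - \frac{1}{\#c}\sum_{w\in c}\Lambda^k\rho(w)\bigr)$. The crux is the pointwise identity $\Lambda^k r - D_r = (1-k)\Id$, valid for any $r$ that is a pseudo-reflection or the identity: this is checked on eigenvalues, since such an $r$ has eigenvalues $\zeta,1,\dots,1$ and any wedge of $k$ eigenvectors contributes $1-k$ to $\Lambda^k r - D_r$ whether or not it involves the $\zeta$-eigenline. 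Since $\rho\in\Ref$, every $\rho(w)$ with $w\in c$ is of this form, so applying the identity to $s$ and averaging over $c$ (using $\frac{1}{\#c}\sum_{w\in c}D_{\rho(w)} = D_{\frac{1}{\#c}\rho(T_c)} = k\frac{\tr\rho(s)}{n}\Id$, which is scalar by Schur) collapses both sides to $\eta_c\bigl(D_{\rho(s)} - k\frac{\tr\rho(s)}{n}\Id\bigr)$, proving (1).

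For (2), write $\rho = \eta\otimes\rho_0$ with $\rho_0\in\Ref$. The Lie algebra $\g := \rho(\HH(\SS)')$ is semisimple, being the image of the semisimple $\HH(\SS)'$; it acts irreducibly on $V_\rho$, since by Proposition 2.2 the $W$-irreducible $\rho$ stays irreducible under $\HH(\SS)$ and the center acts by scalars; and it lies in $\sl(V_\rho)$ by the first paragraph. I would then examine one generator: since $\SS$ generates $W$ and $\dim\rho_0\geq 2$, some $s\in\SS$ has $\rho_0(s)$ a genuine pseudo-reflection, and then $\rho(s') = \eta(s)\rho_0(s) - (\text{scalar})$ is diagonalizable with two distinct eigenvalues, of multiplicities $1$ and $n-1$ (multiplying the pseudo-reflection by the scalar $\eta(s)$ preserves these multiplicities). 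Equivalently $\rho(s')$ is a nonzero scalar plus a rank-one operator. The conclusion $\g = \sl(V_\rho)$ then follows from the classical fact that an irreducible semisimple subalgebra of $\sl(V_\rho)$ containing such a rank-one-perturbation-of-a-scalar must be all of $\sl(V_\rho)$.

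The main obstacle is this last classical input, which I would prove by a grading argument. The element $X = \rho(s')$ splits $V_\rho = L\oplus H$ into its two eigenspaces, with $\dim L = 1$, and $\ad X$ induces on $\sl(V_\rho)$ the three-term maximal-parabolic grading $\mathfrak{p}_-\oplus\mathfrak{l}\oplus\mathfrak{p}_+$ with $\mathfrak{l} = \mathfrak{s}(\gl(L)\oplus\gl(H))$, $\mathfrak{p}_+\cong\Hom(L,H)$ and $\mathfrak{p}_-\cong\Hom(H,L)$. Since $X\in\g$, the subalgebra $\g$ inherits a compatible grading $\g_-\oplus\g_0\oplus\g_+$; the delicate step is to show that irreducibility of $V_\rho$ forces $\g_\pm$ to exhaust $\mathfrak{p}_\pm$, and then that $[\mathfrak{p}_+,\mathfrak{p}_-]$ generates all of $\mathfrak{l}$, whence $\g = \sl(V_\rho)$. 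The averaging over the conjugacy class in part (1) and this grading argument in part (2) are the two places where care is required; everything else is formal once the generators are described via Proposition 2.2.
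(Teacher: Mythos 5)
Your part (1) is the paper's own computation in different packaging: the paper checks on the wedge basis $e_I$ that $\Lambda^k\bigl((\rho\otimes\eta)\bigr)(s)$ and $(\eta\otimes\Lambda^k\rho)(s)$ differ by the scalar $\eta(s)(k-1)\Id$ for every $s\in\SS$, which is precisely your identity $\Lambda^k r - D_r = (1-k)\Id$ for $r$ a pseudo-reflection or the identity; both arguments conclude because the scalars cancel on the generators $s'=s-T_c/\#c$ of $\HH(\SS)'$.

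Part (2) is where you genuinely diverge, and your route is sound. The paper proceeds by induction on the rank $n$: it takes a maximal parabolic subgroup $W_0$ acting irreducibly on a hyperplane $H\subset V_\rho$ (citing [IH2] lemma 2.17), gets $\sl(H)\subseteq\g$ by induction, manufactures the traceless semisimple element $\rho\bigl((\#c)T_0-(\#c_0)T\bigr)$ centralizing the Cartan of $\sl(H)$ so that $\g$ contains a Cartan of rank $n-1$, and finishes with the fact that $\sl_n$ has no proper root subsystem of rank $n-1$ ([IH2] lemma 2.16). You replace all of this by one Lie-theoretic lemma: a semisimple $\g\subseteq\sl(V)$ acting irreducibly and containing a \emph{diagonalizable} traceless element $X$ with eigenvalue multiplicities $(1,n-1)$ equals $\sl(V)$. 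This lemma is true, but it carries the entire weight of (2) and you only flag its two key steps; you should write them out. They are short: with $V=L\oplus H$, $v$ spanning $L$, the subspace $\C v+(\g\cap\Hom(L,H))v$ is $\g$-stable (the block $\g\cap\Hom(H,L)$ kills $v$ and sends $H$ into $L$, and $[\g_0,\g\cap\Hom(L,H)]\subseteq\g\cap\Hom(L,H)$), so irreducibility forces $\g\cap\Hom(L,H)$ to be the full block; the Killing form pairs the two off-diagonal $\ad X$-eigenspaces of $\g$ nondegenerately, so $\g\cap\Hom(H,L)$ is full as well; and the brackets $[\phi,\psi]=\phi\psi-\tr(\phi\psi)\,\mathrm{pr}_L$, with $\phi\psi$ running over all rank-one endomorphisms of $H$, span the Levi $\mathfrak{s}(\gl(L)\oplus\gl(H))$. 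One caution: semisimplicity of $X$ is essential and your phrase \og rank-one perturbation of a scalar\fg{} elides it --- the statement fails for rank-one nilpotents, since $\sp(V)$ contains those and acts irreducibly. What your approach buys is an induction-free, self-contained argument with no reflection-group input beyond irreducibility; what the paper's buys is machinery (parabolic induction, rank counting) that it reuses in \S 3 for $G(de,e,r)$ and $G_{32}$.
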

\begin{proof}
Let $\rho \in \Ref(W)$, and $s \in \SS$. If $\rho(s) = 1$ then
$(\eta \otimes \Lambda^k \rho)(s) = \eta(s)$ and $\Lambda^k(\rho \otimes \eta)(s)
= k \eta(s) = (k-1)\eta(s) + (\eta \otimes \Lambda^k \rho)(s)$. Otherwise,
there exists a basis $e_1,\dots,e_n$ of $V_{\rho}$ such that $s.e_1 = \zeta e_1$
and $s.e_i = e_i$ if $i \neq 1$ for $\zeta$ some nontrivial root of 1.
Taking for basis of $\Lambda^k V_{\rho}$ the basis $e_{I} = e_{i_1} \wedge \dots
\wedge e_{i_k}$ for $I = \{ i_1,\dots,i_k\} \subset [1..n]$ of cardinality $k$
with $i_1<\dots<i_k$ we get that $(\eta \otimes \Lambda^k \rho)(s)$
maps $e_I$ to $\eta(s) e_I$ if $1 \not\in I$ and to $\eta(s) \zeta e_I$
if $1 \in I$. Similarly, $\Lambda^k((\rho \otimes \eta)_{\HH(\SS)})$
maps $e_I$ to $k \eta(s)e_I$ if $1 \not\in I$ and to $\eta(s)(k-1 + \zeta)e_I$
otherwise. It follows that $\Lambda^k((\rho \otimes \eta)_{\HH(\SS)}(s) =
\eta(s)(k-1)\mathrm{Id} + (\eta \otimes \Lambda^k \rho)(s)$ for
all $s \in \SS$, which easily implies
$\Lambda^k((\rho \otimes \eta)_{\HH(\SS)'} =
(\eta \otimes \Lambda^k \rho)_{\HH(\SS)'}$ and proves (1).
It is clearly enough to prove (2) for $\rho \in \Ref$. Since $\rho(W)$
is an irreducible reflection group, we can also assume that $\rho$ is faithful.
We proceed by induction on the rank $n = \dim V_{\rho}$ of $W$
for $n \geq 1$, the case $n = 1$ being trivial, so we assume $n \geq 2$.
Let $W_0 \subset W$ be a maximal parabolic subgroup which acts
irreducibly on some hyperplane $H$ of $V_{\rho}$ (see \cite{IH2} lemma 2.17).
The image $\g$ of $\HH(\SS)'$ in $\sl(V_{\rho})$ contains $\sl(H)$, hence a Cartan
subalgebra of rank $n-2$. Since $n \geq 2$, there exists $s \in \SS \cap W_0$
with $\rho(s) \not\in \kt$. Let $\zeta \neq 1$ with $\zeta \in \mathrm{Sp}(s)$.
We denote $c_0$ the conjugacy class of $s$ in $W_0$ and $c$ its class in $W$,
$T_0 = \sum_{g \in c_0} g$, $T = \sum_{g \in c} g$. Since $T$ is central in
$\HH(\SS)$ and $\rho$ is irreducible, $\rho(T)$ is a scalar determinated
by its trace $(\# c)(n-1 + \zeta)$. Similarly, $\rho(T_0)$
has for trace $(\# c_0)(n-1+\zeta)$, acts on $H$ by $(\# c_0)(n-2+\zeta)/(n-1)$
and on its orthogonal supplement by $\# c_0$. 
Letting $X = (\# c)T_0 - (\# c_0)T$ we get that
$x = \rho(X)$ has zero trace,
belongs to $\sl(V_{\rho}) \cap \rho(\HH(\SS)) = \rho(\HH(\SS)')$
(since $\HH(\SS)$ is reductive), is semisimple and centralizes the Cartan of $\sl(H)$.
It follows that the semisimple Lie algebra $\g$
contains a Cartan subalgebra of rank $n-1$. Since $\sl_n$ contains no proper
root system of rank $n-1$ (see \cite{IH2}, lemma 2.16) it follows that $\g = \sl(V_{\rho})$,
and this proves (2) by induction.
\end{proof}

We now focus on two special cases, fixing $\mathcal{S}$
to be minimal, for instance by letting
$\SS$ being the set of distinguished pseudo-reflections.
We denote $\HH_s = \HH(\SS)$, the letter `s' being understood
as the `special' infinitesimal Hecke algebra.
The second one is for $\SS$ maximal; we let $\mathcal{H} = \HH(\RR)$
and call it the \emph{ambient} infinitesimal Hecke algebra. 
We denote $\mu_{\infty}(\C)$ the group of roots of 1.

\begin{lemma} \label{lemcercle}
Let $\mathcal{X} \subset \mu_{\infty}(\C)$ and
$\om \in \C^{\times}$ such that $\om + \mathcal{X} \subset \mu_{\infty}$.
Then $|\mathcal{X}| \leq 2$ and, if $|\mathcal{X}| = 2$
with $\mathcal{X} = \{ \alpha, \beta \}$, then $\om = -\alpha - \beta$.
\end{lemma}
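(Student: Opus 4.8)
The plan is to translate everything into plane geometry: a root of unity has modulus $1$, so the hypothesis $\om + \mathcal{X} \subset \mu_{\infty}$ says precisely that every $\alpha \in \mathcal{X}$ satisfies both $|\alpha| = 1$ and $|\om + \alpha| = 1$. Geometrically, each such $\alpha$ lies simultaneously on the unit circle centered at $0$ and on the unit circle centered at $-\om$. Since $\om \neq 0$ these two circles are distinct, and two distinct circles meet in at most two points; this already yields $|\mathcal{X}| \leq 2$.

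To make this precise and to prepare the second assertion, I would expand the modulus conditions. For $\alpha \in \mathcal{X}$, writing $|\om + \alpha|^2 = |\om|^2 + \bar\om\,\alpha + \om\,\bar\alpha + |\alpha|^2$ and using $|\alpha| = 1$ yields the single linear relation
\begin{equation}
|\om|^2 + \bar\om\,\alpha + \om\,\bar\alpha = 0, \qquad \text{i.e.} \qquad \mathrm{Re}(\bar\om\,\alpha) = -\tfrac12 |\om|^2. \tag{$\ast$}
\end{equation}
This is the equation of a line (the radical axis of the two circles), and $(\ast)$ together with $|\alpha| = 1$ is exactly the system cutting out the intersection points; a line meets the unit circle in at most two points, which reproves the bound.

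For the equality case, suppose $\mathcal{X} = \{\alpha,\beta\}$ with $\alpha \neq \beta$. I would write down $(\ast)$ for both $\alpha$ and $\beta$ and subtract. Using $\bar\alpha = \alpha^{-1}$ and $\bar\beta = \beta^{-1}$ (both have modulus $1$), the difference simplifies, after dividing by the nonzero factor $\alpha - \beta$, to the clean relation $\om = \bar\om\,\alpha\beta$. Substituting this back into $(\ast)$ for $\alpha$ and cancelling the nonzero $\om$ then gives $\bar\om = -(\alpha+\beta)/(\alpha\beta)$, whence $\om = \bar\om\,\alpha\beta = -\alpha - \beta$, as claimed.

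The computation is entirely elementary, so there is no serious obstacle; the only step requiring care is the equality case, where one must genuinely combine the two copies of $(\ast)$ (rather than merely count intersection points) to pin down the exact value $\om = -\alpha - \beta$. Equivalently, this value can be read off geometrically from the fact that two circles of equal radius meet in points symmetric about the midpoint $-\om/2$ of the segment joining their centers, forcing $\tfrac12(\alpha+\beta) = -\tfrac12\om$.
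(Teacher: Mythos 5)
Your proof is correct, but it is organized dually to the paper's. You fix the translation $\om$ and view the elements of $\mathcal{X}$ as intersection points of the two unit circles centered at $0$ and $-\om$, whereas the paper fixes two distinct elements $\alpha,\beta \in \mathcal{X}$ and views $\om$ as a solution $z$ of the system $|\alpha+z|^2 = |\beta+z|^2 = 1$, i.e.\ as an intersection point of the unit circles centered at $-\alpha$ and $-\beta$. The paper's orientation has a small payoff: one checks directly that $0$ and $-\alpha-\beta$ both solve that system, so by the two-point intersection bound the solution set is contained in $\{0,\,-\alpha-\beta\}$; since $\om \neq 0$ this pins down $\om = -\alpha-\beta$ in one stroke, and then $|\mathcal{X}|\leq 2$ follows because a third element $\gamma$ would force $-\alpha-\beta = -\alpha-\gamma$, i.e.\ $\beta = \gamma$. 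In your version the cardinality bound comes for free from the circle intersection, but the equality case requires the separate radical-axis elimination --- which you carry out correctly: the subtraction of the two copies of $(\ast)$, the division by the nonzero factors $\alpha-\beta$ and $\bar\om$, and the back-substitution all hold, and your closing geometric remark (intersection points of equal-radius circles are symmetric about the midpoint $-\om/2$ of the centers) is a valid shortcut to the same value. Both arguments are elementary and complete; yours trades the paper's ``exhibit the two solutions'' step for an explicit algebraic computation.
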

\begin{proof}
This is a consequence of the fact that, if $\alpha,\beta \in \C$
with $\alpha \neq \beta$, then the equation $|\alpha+z|^2 = |\beta+ z|^2=1$
has at most two solutions ; for $|\alpha| = |\beta | = 1$
these solutions are $0$ and $-\alpha-\beta$. Since $\omega \neq 0$,
$\alpha,\beta,\gamma \in \mathcal{X}$ would imply $\alpha + \beta = \alpha
+ \gamma$ hence $\beta = \gamma$ and $|\mathcal{X}| \leq 2$.
\end{proof}
 
\begin{prop} \label{propHHisom}
\begin{enumerate}
\item If $\rho^1, \rho^2 \in \Irr(W)$ then $\rho^1 \simeq \rho^2
\Leftrightarrow (\rho^1)_{\mathcal{H}} \simeq (\rho^2)_{\mathcal{H}}$.
\item If $\rho^1,\rho^2 \in \Irr(W)$ with $\dim \rho^i > 1$,
then  $\rho^1_{\mathcal{H}'} \simeq \rho^2_{\mathcal{H}'}$ iff
$\rho^2 \simeq \rho^1 \otimes \eta$ for some $\eta \in \mathrm{X}(\rho)$.
\end{enumerate}
\end{prop}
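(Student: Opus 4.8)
The plan is to treat the two parts separately, the first being formal and the second resting on Lemma \ref{lemcercle} together with a multiplicity count. For part (1) the implication $\Rightarrow$ is immediate, since $\mathcal{H}\subset\kk W$ and any $W$-isomorphism restricts to an $\mathcal{H}$-isomorphism. For $\Leftarrow$ I would use that the associative subalgebra of $\kk W$ generated by $\mathcal{H}$ is all of $\kk W$: indeed $\RR\subset\mathcal{H}$ and $\RR$ generates $W$, hence $\kk W$. Thus if $\phi$ intertwines $(\rho^1)_{\mathcal{H}}$ and $(\rho^2)_{\mathcal{H}}$, the set $\{a\in\kk W\ |\ \phi\,\rho^1(a)=\rho^2(a)\,\phi\}$ is an associative subalgebra containing $\RR$, hence equals $\kk W$, so $\phi$ is a $W$-isomorphism and $\rho^1\simeq\rho^2$.

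For the \og if\fg\ direction of part (2), I would compute on the generators $s'=s-T_c/\#c$ of $\mathcal{H}'$. Since $\eta$ is constant on the class $c$, one finds $(\rho^1\otimes\eta)(s')=\eta(s)\,\rho^1(s')$. If $\rho^1(s)$ is scalar then $\rho^1(s')$ is scalar and traceless, hence $0$; if $\rho^1(s)$ is not scalar then $\eta\in\XX(\rho^1)$ forces $\eta(s)=1$. In both cases $(\rho^1\otimes\eta)(s')=\rho^1(s')$, so the two representations coincide on a generating set of $\mathcal{H}'$, whence $\rho^1_{\mathcal{H}'}=(\rho^1\otimes\eta)_{\mathcal{H}'}\simeq\rho^2_{\mathcal{H}'}$.

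For the \og only if\fg\ direction I would first produce $\eta$ and then check $\eta\in\XX(\rho^1)$. Since $\mathcal{H}=Z(\mathcal{H})\oplus\mathcal{H}'$ with $Z(\mathcal{H})$ acting by scalars, every $W$-irreducible restricts to an $\mathcal{H}'$-irreducible, and such a restriction is trivial exactly when the representation is a linear character (its image being then abelian). Hence, over $\C$, $\dim\Hom_{\mathcal{H}'}(V_{\rho^1},V_{\rho^2})=\dim\big((\rho^1)^{*}\otimes\rho^2\big)^{\mathcal{H}'}=\sum_{\eta\in\Hom(W,\kt)}\langle\rho^2,\rho^1\otimes\eta\rangle$. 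The left-hand side is $1$ by Schur's lemma, so exactly one $\eta$ contributes and $\rho^2\simeq\rho^1\otimes\eta$. To see that $\eta\in\XX(\rho^1)$, take an intertwiner $\psi$ of $\rho^1_{\mathcal{H}'}$ with $(\rho^1\otimes\eta)_{\mathcal{H}'}$; evaluating on $s'$ gives $\psi\,\rho^1(s)\,\psi^{-1}=\eta(s)\rho^1(s)+\mu_c$ with $\mu_c=(\chi_{\rho^1}(s)-\chi_{\rho^2}(s))/\dim\rho^1$, so the affine map $T\colon\gamma\mapsto\eta(s)\gamma+\mu_c$ preserves the multiset $\mathrm{Sp}(\rho^1(s))\subset\mu_{\infty}(\C)$. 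If $\eta(s)\neq 1$ then $T$ is a nontrivial rotation with a single fixed point, hence fixes at most one eigenvalue; as $T$ permutes the eigenvalues preserving multiplicities, any eigenvalue which is the unique one of its multiplicity must be fixed. Thus if $\rho^1(s)$ is non-scalar with two eigenvalues of different multiplicities — in particular when $\rho^1\in\Ref$ and $\dim\rho^1\geq 3$, where the reflection $\rho^1(s)$ has eigenvalue multiplicities $1$ and $\dim\rho^1-1$ — two distinct eigenvalues are fixed and $\eta(s)=1$ follows. Lemma \ref{lemcercle} is what guarantees, when $\mu_c\neq 0$, that there are at most two eigenvalues to begin with.

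The main obstacle is the remaining configuration in which $T$ can act nontrivially, namely when all relevant eigenvalue multiplicities of $\rho^1(s)$ coincide; for $\rho^1\in\Ref$ this happens only in dimension $2$, where $\rho^1(s)$ has eigenvalues $\{\zeta,1\}$ and $\eta(s)=-1$ is locally permitted. I expect to dispose of this case globally: a single $\psi$ realizing $\eta(s)=-1$ on a non-scalar reflection must interchange the two eigenlines of $\rho^1(s)$ while preserving those of every reflection with $\eta(s)=1$, which confines $\rho^1(\ker\eta)$ to a maximal torus and makes $\rho^1|_{\ker\eta}$ reducible; since $W$ is irreducible and not a $2$-reflection group, this contradicts the irreducibility of the faithful two-dimensional $\rho^1$. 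This rules out $\eta(s)=-1$, yields $\eta\in\XX(\rho^1)$, and completes part (2).
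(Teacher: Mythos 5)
Parts (1) and the ``if'' half of (2) are correct and coincide with the paper's own arguments. The ``only if'' half of (2), however, contains a genuine error, precisely at the step that produces $\eta$. The chain
$$
\dim\Hom_{\mathcal{H}'}(V_{\rho^1},V_{\rho^2})=\dim\bigl((\rho^1)^{*}\otimes\rho^2\bigr)^{\mathcal{H}'}=\sum_{\eta\in\Hom(W,\kt)}\langle\rho^2,\rho^1\otimes\eta\rangle
$$
cannot hold: the intertwiner space on the left is the invariant space for the \emph{Lie-algebra} action $x\cdot\phi=\rho^2(x)\phi-\phi\rho^1(x)$ of $\mathcal{H}'$, whereas the count on the right computes invariants of $(\rho^1)^{*}\otimes\rho^2$ as a \emph{$W$-module}, i.e.\ for the action $w\cdot\phi=\rho^2(w)\phi\rho^1(w)^{-1}$ extended linearly to $\kk W$ and then restricted to $\mathcal{H}'$. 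These two actions of $\mathcal{H}'\subset\kk W$ on $\Hom(V_{\rho^1},V_{\rho^2})$ are genuinely different — an element of $\mathcal{H}'$ is a linear combination of group-like elements of $\kk W$, not a primitive element — so the $W$-isotypic decomposition says nothing about the Lie invariants. The formula is in fact false: take $W=G(3,1,2)$ (irreducible, not a $2$-reflection group) and $\rho^1=\rho^2=\rho$ the reflection representation. The left side equals $1$ by Schur's lemma, since $\rho_{\mathcal{H}'}$ is irreducible; but $\rho\otimes\eta_0\simeq\rho$ for $\eta_0$ the sign of the underlying permutation (both characters vanish on every element whose image in $\mathfrak{S}_2$ is the transposition, and $\eta_0$ is trivial on the diagonal elements), so the right side is at least $2$. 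The same example shows that a character $\eta$ with $\rho^2\simeq\rho^1\otimes\eta$ need not lie in $\XX(\rho^1)$ (here $\eta_0\notin\XX(\rho)$), so your second step — proving that \emph{the} $\eta$ you produced lies in $\XX(\rho^1)$ — is aimed at the wrong statement; what must be shown is that \emph{some}, possibly different, character in $\XX(\rho^1)$ works.

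That second step is moreover incomplete on its own terms: the fixed-point argument for $T\colon\gamma\mapsto\eta(s)\gamma+\mu_c$ yields $\eta(s)=1$ only when some eigenvalue of $\rho^1(s)$ has a multiplicity distinct from all the others, and your final patch treats only two-dimensional representations in $\Ref$; but the proposition concerns every $\rho^1$ of dimension $>1$, and for instance the $9$-dimensional representations of $G_{25}$ have $\mathrm{Sp}(\rho(s))$ consisting of three eigenvalues each of multiplicity $3$, which $T$ could a priori permute cyclically. The paper disposes of all cases at once by the device you are missing: since $\mathcal{H}=\HH(\RR)$ is generated by \emph{all} reflections, the intertwining relation $P\rho^2(s)P^{-1}=\rho^1(s)+\om_s$ (with $\om_s$ a scalar) holds for every $s\in\RR$, and squaring it gives $P\rho^2(s^2)P^{-1}=\rho^1(s^2)+2\om_s\rho^1(s)+\om_s^2$; since $s^2$ again lies in $\RR$ or equals $1$, the left side also equals $\rho^1(s^2)+\om_{s^2}$ with $\om_{s^2}$ a scalar (or is the identity), forcing $\rho^1(s)$ to be a scalar whenever $\om_s\neq 0$. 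Stanley's theorem then assembles the values $\eta(s)=\rho^2(s)/\rho^1(s)$ (when $\om_s\neq0$) and $\eta(s)=1$ (when $\om_s=0$) into a character $\eta\in\XX(\rho^1)$ with $\rho^2\simeq\rho^1\otimes\eta$. Without this squaring argument, or an equivalent substitute, your proof of the ``only if'' direction does not go through.
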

\begin{proof}
(1) is a direct consequence of the fact that $\mathcal{H}$
and $W$ are both generated by $\RR$. For $(2)$, we assume that
$(\rho^1)_{\HH'}$ and $(\rho^2)_{\HH'}$ are irreducible and identify
the underlying vector spaces of $\rho^1$, $\rho^2$. Denoting it by $V$,
there exists $P \in \GL(V)$ with $P \rho^2(s') P^{-1} = \rho^1(s')$ for
each $s \in \RR$, with $s' = s-(1/N) T_s$, $N$ the cardinality of
the conjugacy class $c$ of $s$ and $T_s = \sum_{g \in c} g$. Since
$\rho^1,\rho^2$ are irreducibles we have $T_s \in \kk$. We let
$\om_s = (\rho^2(T_s) - \rho^1(T_s))/N$, so that $P \rho^2(s) P^{-1} = 
\rho^1(s) + \om_s$. Raising the previous equation to the square we get
$P \rho^2(s^2) P^{-1} = \rho^1(s^2) + 2 \om_s \rho^1(s) + \om_s^2$.
We apply this to some $s \in \SS$. If $s^2 = 1$,
then we have $1 = 1 + 2 \om_s \rho^1(s) + \om_s^2$ hence $\om_s = 0$
or $\rho^1(s) = -\om_s/2 \in \kt$. Otherwise, $s^2 \in \RR$
hence $P \rho^2(s^2) P^{-1} = \rho^1(s^2) + \om_{s^2}$
and $\om_{s^2} =   2 \om_s \rho^1(s) + \om_s^2$,
so also in this case $\om_s = 0$ or $\rho^1(s) \in \kt$.

For $C$ the set of conjugacy classes of hyperplanes we define
$J \subset C$ by $c \in J$ iff the corresponding $s \in \SS$ satisfy $\om_s \neq 0$.
For such an $s$ we define $\eta(s) \in \kt$ by $\rho^2(s) = \rho^1(s) \eta$,
and define $\eta(s) = 1$ otherwise. Stanley's theorem extends this
formula to a character $\eta \in \Hom(W,\kt)$ such that $\rho^2 \simeq
\rho^1 \otimes \eta$ with $\eta \in \XX(\rho^1)$. Conversely,
if $\rho^2 = \rho^1 \otimes \eta$ with $\eta \in \XX(\rho^1)$
and $s \in \RR$, then either $\rho^2(s) = \rho^1(s)$, or $\rho^2(s),
\rho^1(s) \in \kt$ hence $\rho^1(s) \eta(s) = \rho^1(s) + \om$ for
some $\om \in \C^{\times}$, which implies $\rho^2(s') = \rho^1(s')$.
This concludes the proof of (2). 
\end{proof}

\begin{prop} \label{propduauxHH} Let $\rho \in \Irr(W)$.
There exists $\rho^1 \in \Irr(W)$ such that $\rho^1_{\HH_s'} \simeq (\rho_{\HH_s'})^*$
if and only if, for any $s \in \SS$, 
$\rho(s)$ has at most two eigenvalues.
In that
case, $\rho^1 \simeq \rho^* \otimes \chi$ with $\chi \in \Hom(W,\kt)$
and $\chi(s)$ for $s \in \SS$ is equal to the product of the eigenvalues of $\rho(s)$
whenever $\rho(s)$ has two distinct ones ; moreover,
one also has $\rho^1_{\HH'} \simeq (\rho^*)_{\HH'}$.
\end{prop}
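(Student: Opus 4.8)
The plan is to produce $\rho^1$ explicitly in the form $\rho^*\otimes\chi$ for a suitable $\chi\in\Hom(W,\kt)$, working entirely with the generators $s'=s-T_s/\#c$ of $\HH_s'$ ($c$ the class of $s$). By irreducibility $s'$ acts on $\rho$ by $\rho(s)-\omega_s$, where $\omega_s=\rho(T_s)/\#c=\tr\rho(s)/\dim\rho\in\kk$ is a scalar, and on the dual $(\rho_{\HH_s'})^*$ the same generator acts by the negative transpose $-{}^t\rho(s)+\omega_s$. For the implication starting from the eigenvalue condition, assume each $\rho(s)$, $s\in\SS$, has at most two eigenvalues. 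If $\rho(s)$ is scalar then $s'$ acts by $0$ on both sides. If $\rho(s)$ has two distinct eigenvalues $\alpha,\beta$, its minimal polynomial $(X-\alpha)(X-\beta)$ gives $\alpha\beta\,\rho(s)^{-1}=(\alpha+\beta)-\rho(s)$; transposing and setting $\chi(s)=\alpha\beta$ yields the key identity $\chi(s)\rho^*(s)=(\alpha+\beta)-{}^t\rho(s)$, hence $(\rho^*\otimes\chi)(s')=-{}^t\rho(s)+\omega_s=(\rho_{\HH_s'})^*(s')$. This is an \emph{equality} of operators on each generator, not merely of spectra, so $\rho^1=\rho^*\otimes\chi$ (irreducible, being a character twist of $\rho^*$) does the job as soon as $\chi$ exists as a character.

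To manufacture $\chi$, I would use Stanley's theorem in the form $\Hom(W,\kt)\simeq\prod_C\Z/e_C\Z$ to prescribe $\chi$ on one primitive pseudo-reflection per hyperplane class by the value $\alpha\beta$, and then check consistency. The only compatibility to verify is $\chi(s_0^r)=\chi(s_0)^r$ against the eigenvalue product of $\rho(s_0^r)=\rho(s_0)^r$; since the eigenvalues of $\rho(s_0^r)$ are the $r$-th powers of those of $\rho(s_0)$, both sides equal $(\alpha\beta)^r$ whenever $\rho(s_0^r)$ has two distinct eigenvalues, and the scalar case imposes nothing. Thus $\chi$ is well defined and $\rho^1=\rho^*\otimes\chi$ satisfies $\rho^1_{\HH_s'}\simeq(\rho_{\HH_s'})^*$.

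For the converse, suppose $\rho^1\in\Irr(W)$ with $\rho^1_{\HH_s'}\simeq(\rho_{\HH_s'})^*$. Conjugating the generator actions by a fixed $P$ and restoring the scalars gives, for every $s\in\SS$, the matrix identity $P\rho^1(s)P^{-1}=\om^{(s)}-{}^t\rho(s)$, where $\om^{(s)}=\omega_s+\omega^1_s$ and $\omega^1_s=\tr\rho^1(s)/\dim\rho^1$ is the analogous scalar for $\rho^1$. As $\rho^1(s)$ has finite order its eigenvalues lie in $\mu_{\infty}$, and they are exactly the $\om^{(s)}-\zeta_i$, the $\zeta_i\in\mu_{\infty}$ being the eigenvalues of $\rho(s)$. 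Lemma \ref{lemcercle}, applied with $\mathcal{X}=\{-\zeta_i\}$ and $\om=\om^{(s)}$, forces $|\mathcal{X}|\le2$ as soon as $\om^{(s)}\neq0$; in that two-eigenvalue case its equality clause gives $\om^{(s)}=\alpha+\beta$, whence $P\rho^1(s)P^{-1}=(\alpha+\beta)-{}^t\rho(s)=\alpha\beta\,\rho^*(s)$. Reading off $\chi(s)=\alpha\beta$ on generators and extending by Stanley identifies $\rho^1\simeq\rho^*\otimes\chi$ with $\chi(s)$ the product of the eigenvalues, as claimed.

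The step I expect to be most delicate is the degenerate case $\om^{(s)}=0$, in which Lemma \ref{lemcercle} gives no bound on $|\mathcal{X}|$: there $\rho^1(s)$ has eigenvalues $-\zeta_i$, which already forces $\mathrm{ord}(s)$ to be even. I would rule out three or more eigenvalues here by using that the assignment $s\mapsto-{}^t\rho(s)$ is not multiplicative unless it coincides with a genuine character twist, so that the homomorphism property of $\rho^1$ on $W=\langle\SS\rangle$, together with the values forced on the powers $s^r$, must either produce a nonzero $\om$ on some power (returning us to the Lemma) or collapse the spectrum to two values. Finally, for the last assertion I note that the eigenvalue hypothesis propagates from $\SS$ to all of $\RR$: every $s\in\RR$ equals $s_0^r$ with $s_0\in\SS$ by Stanley's theorem, so $\rho(s)$ again has at most two eigenvalues and $\chi(s)=\chi(s_0)^r$ is again their product. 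The very same minimal-polynomial identity then gives $(\rho^*\otimes\chi)(s')=(\rho_{\HH'})^*(s')$ on every generator $s'$ of the ambient $\HH'$, so that the isomorphism upgrades to $\rho^1_{\HH'}\simeq(\rho_{\HH'})^*$, the $\HH'$-analogue of the $\HH_s'$-statement.
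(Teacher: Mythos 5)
Your proposal follows the paper's own route quite closely: the generator identity $P\rho^1(s)P^{-1}=-\,{}^t\rho(s)+\om^{(s)}$, Lemma \ref{lemcercle} when $\om^{(s)}\neq 0$, Stanley's theorem to assemble $\chi$, and the minimal-polynomial identity $\chi(s)\rho^*(s)=(\alpha+\beta)-{}^t\rho(s)$. Your ``if'' direction and the final upgrade to $\HH'$ (which you read, surely as intended, as $\rho^1_{\HH'}\simeq(\rho_{\HH'})^*$) are correct and in fact more explicit than the paper's ``it is easily checked''.

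The genuine gap is exactly the case you flag, $\om^{(s)}=0$, and your sketch for closing it does not work. The duality isomorphism constrains only the elements of $\SS$: a power $s^r$ with $r\geq 2$ is not in $\SS$, so no relation ``with an undetermined scalar'' is available for it. Multiplicativity instead forces $\rho^1(s^r)=(-1)^r\,P^{-1}\,{}^t\rho(s^r)\,P$ outright, with no free parameter, and this is compatible with an arbitrary spectrum of $\rho(s)$ once the order of $s$ is even; so nothing ``returns you to the Lemma'', and no collapse of the spectrum follows. Moreover, no argument confined to spectra, traces and restrictions (which is all your sketch invokes) can succeed. Take $W=G(4,1,3)$ and $\rho$ labelled by $([1],[1],[1],\emptyset)$, so that $\rho(t_1)$ has the three eigenvalues $1,\ii,-1$; the unique spectrally admissible candidate is $\rho^1$ labelled by $([1],\emptyset,[1],[1])$, and for this pair every scalar $\om$ vanishes, all traces match, and one even has $\rho^1_{\HH_0'}\simeq(\rho_{\HH_0'})^*$ for $\HH_0'\subset\HH_s'$ the infinitesimal Hecke algebra of $W_0=G(4,4,3)$ (the two multipartitions differ by a cyclic rotation, and $\bla^*=([1],\emptyset,[1],[1])$); yet the proposition asserts that no such $\rho^1$ exists. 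To be fair, the paper's written proof is equally silent on this case (it simply decrees $\chi(s)=-1$ there), so you located the real difficulty; you just did not resolve it.

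What closes the case is the hypothesis you never use: $\SS$ is stable under $W$-conjugation. Suppose $\om^{(s)}=0$ and let $u\in\SS$ be arbitrary. Then $sus^{-1}\in\SS$ and carries the same scalar $\om_u$ (a class function), so expanding both sides of $\rho^1(sus^{-1})=\rho^1(s)\rho^1(u)\rho^1(s)^{-1}$ by the generator identities gives ${}^t\rho(sus^{-1})={}^t\rho(s^{-1}us)$, i.e.\ $\rho(s^{-2}vs^{2})=\rho(v)$ for every $v\in\SS$. Since $\SS$ generates $W$ and $\rho$ is irreducible, $\rho(s)^2$ is a scalar $\lambda$ by Schur's lemma; hence $\rho(s)$ has at most two eigenvalues $\pm\mu$ with $\mu^2=\lambda$, their product $-\lambda$ is the required value of $\chi(s)$, and $-{}^t\rho(s)=(-\lambda)\,{}^t\rho(s)^{-1}=\chi(s)\rho^*(s)$ puts you back in the generic case. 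In the $G(4,1,3)$ example this is exactly the obstruction: $\rho(t_1)^2$ is not scalar, which is why the candidate $\rho^1$ fails to be a homomorphism.
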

\begin{proof}
We assume $\rho^1_{\HH_s'} \simeq (\rho_{\HH_s'})^*$, and identify
the underlying vector space with a common $\kk^n$. Let $s \in \SS$
and $X = \rho(s)$, $Y = \rho^1(s)$. We have $P \in \GL_n(\kk)$
independent from $s$ with $P \rho^1(s')P^{-1} = - ^t \rho(s')$ i.e.
$PYP^{-1} = - ^t X + \om_s$
with $\om_s = (1/\#c)(\rho^1(T) + \rho(T))$, $c$ being the conjugacy
class of $s$, $T = \sum_{g \in c} g$.

We assume $\om_s \neq 0$ and let $n$ denote the order of $s$. From
$P Y P^{-1} = - ^t X + \om_s$
and $\om_s - \mathrm{Sp} (X) \subset \mu_{\infty}$ we get from
lemma \ref{lemcercle} that $| \Sp X| \leq 2$ and, if
$\Sp(X) = \{ \alpha,\beta \}$ with $\alpha \neq \beta$, we have
$\om_s = \alpha + \beta$ hence, since $X$ is semisimple,
$\om_s - X = \beta \alpha X^{-1}$, whence $PYP^{-1} = u_s ^t X^{-1}$
for some $u_s \in \mu_n$; similarly,
if $|\Sp(X)| = 1$, then $PYP^{-1} = u_s ^t X^{-1}$ for some
$u_s \in \mu_n$. Then $s^k \mapsto u_s^k$ defines a character of
the subgroup generated by $s$.




Let now $\chi \in \Hom(W,\kt)$ defined by
$\chi(s) = -1$ if $\om_s = 0$ or $s^2 = 1$, $\chi(s) = u_s$ for $\rho(s) \in \kt$
with $u$ defined as above, and otherwise
$\chi(s)$ equal to the product of the eigenvalues of $\rho(s)$.
We then have $\rho^1 \simeq \chi \otimes \rho^*$ and conversely,
if  $\rho^1 = \chi \otimes \rho^*$ with such a $\chi$, then it is easily
checked that $(\rho^1)_{\HH'} \simeq (\rho^*)_{\HH'}$.
\end{proof}

Note that any two $\chi$ as in the above statement are deduced from
each other by tensoring by some $\eta \in \XX(\rho^1)$.

We define an equivalence relation $\approx$ on $\Irr(W)$
generated by $\rho^1 \approx \rho^2$ for $\rho^2 = \rho^1 \otimes \eta$,
$\eta \in \XX(\rho^1)$, and $\rho^2 = (\rho^1)^* \otimes \chi$ for
a $\chi$ as in proposition \ref{propduauxHH}. We let $\overline{\QRef}$
a set of representatives in $\QRef$ of $\QRef/\approx$, and subdivide
$\Irr(W) \setminus \LRef = \mathcal{E} \sqcup \mathcal{F}$ with $\mathcal{F}$
the $\rho$ such that $\rho_{\HH'} \simeq (\rho_{\HH}')^*$. We identify
$\mathcal{E}/\approx$ and $\mathcal{F}/\approx$
with some subset of representatives. Finally, if $\rho_{\HH'} \simeq \rho_{\HH'}^*$,
there exists a nondegenerate bilinear form over $V_{\rho}$ preserved
by $\rho(\HH')$. We say that $\rho \in \Irr(W)$
is of \emph{orthogonal type} if this bilinear form is symmetric,
and of \emph{symplectic type} otherwise, namely if this bilinear
form is skew-symmetric. In both cases,
we denote the corresponding orthogonal or symplectic
algebra by $\osp(V_{\rho})$.

We now define the following Lie algebra
$$
\mathcal{M} = \left(\bigoplus_{\rho \in \overline{\QRef}} \sl(V_{\rho}) \right)
\oplus \left(\bigoplus_{\rho \in \mathcal{E}/\approx} \sl(V_{\rho}) \right)
\oplus \left(\bigoplus_{\rho \in \mathcal{F}/\approx} \osp(V_{\rho}) \right)
$$
and embed $\mathcal{M}$ inside $\kk W$ in the obvious way.
For instance, if $\rho_0$ is a representative of
some class in $\mathcal{E}$, then the $\sl(V_{\rho})$
factor in $\mathcal{M}$ is identified with a diagonal factor
$\bigoplus_{\rho \approx \rho_0} \sl(V_{\rho})$
using the isomorphisms $\rho_{\HH'} \simeq \rho^0_{\HH'}$
or $\rho_{\HH'}^* \simeq (\rho^0_{\HH'})^*$.

By the above propositions, we have $\HH' \subset \mathcal{M}$.
In the next section we will prove the following theorem.

\begin{theo} \label{theostruct} If $W$ is irreducible and $W \neq H_4$, then
$\HH' = \mathcal{M}$.
\end{theo}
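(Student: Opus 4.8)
The plan is to prove the reverse inclusion $\mathcal{M} \subset \HH'$, the inclusion $\HH' \subset \mathcal{M}$ being already established. Both are semisimple, and $\mathcal{M} = \bigoplus_{\rho} \g_\rho$ is a sum of simple factors $\g_\rho \in \{\sl(V_\rho),\osp(V_\rho)\}$ indexed by the disjoint union $\overline{\QRef} \sqcup (\mathcal{E}/\approx) \sqcup (\mathcal{F}/\approx)$. I would reduce the equality to two assertions, via the Goursat-type structure of a semisimple subalgebra of a product of simple algebras: writing $p_\rho$ for the projection of $\mathcal{M}$ onto $\g_\rho$, one needs \textbf{(A)} $p_\rho(\HH') = \g_\rho$ for each representative $\rho$, and \textbf{(B)} for distinct representatives $\rho \neq \rho'$ there is no Lie algebra isomorphism $\phi \colon \g_\rho \to \g_{\rho'}$ with $p_{\rho'}|_{\HH'} = \phi \circ p_\rho|_{\HH'}$. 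Indeed, a semisimple subalgebra surjecting onto each simple factor of $\bigoplus_\rho \g_\rho$ is the whole sum as soon as no two factors are linked by such a graph, since pairwise non-linking forces each simple summand of $\HH'$ to map isomorphically onto exactly one $\g_\rho$.

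For \textbf{(B)}, such a $\phi$ would force $\rho'_{\HH'}$ to be isomorphic to $\rho_{\HH'}$ or to its dual: up to inner automorphisms the only nontrivial automorphism of $\sl(V)$ is $X \mapsto -{}^tX$, and the defining representations of $\so(V_\rho)$ and $\sp(V_\rho)$ are self-dual and preserved, up to isomorphism, by every automorphism apart from triality of $\so_8$. By Propositions \ref{propHHisom}(2) and \ref{propduauxHH}, the relation $\rho_{\HH'} \simeq \rho'_{\HH'}$ or $\rho_{\HH'} \simeq (\rho'_{\HH'})^*$ holds precisely when $\rho \approx \rho'$, contradicting that $\rho,\rho'$ represent distinct classes. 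The $\so_8$/triality case I would treat separately, ruling it out from the constrained eigenvalue structure of the generators $\rho(s')$ (a genuine triality-linking of two $8$-dimensional orthogonal $W$-representations would relate a defining representation to a spin representation, which the reflection spectra forbid). Thus \textbf{(B)} holds and the theorem reduces to \textbf{(A)}.

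For \textbf{(A)}, the $\overline{\QRef}$-factors are immediate from the proposition giving $\rho(\HH(\SS)') = \sl(V_\rho)$ for $\rho \in \QRef$. The substance is surjectivity for $\rho \in \mathcal{E} \cup \mathcal{F}$, i.e.\ for the irreducibles outside $\LRef$. There $\g = \rho(\HH')$ is a semisimple subalgebra of $\sl(V_\rho)$ (resp.\ $\osp(V_\rho)$), acting irreducibly and generated by the $\rho(s') = \rho(s) - \rho(T_c)/\#c$, each $\rho(s)$ semisimple with roots-of-unity eigenvalues. I would imitate the inductive scheme from the proof of $\rho(\HH(\SS)')=\sl(V_\rho)$: descend to a maximal parabolic $W_0$ acting irreducibly on a hyperplane, so that $\g$ contains the image of $\HH(\SS \cap W_0)'$ and hence a Cartan of near-maximal rank, produce an extra semisimple element from $X = (\#c)T_0 - (\#c_0)T$, and conclude that $\g$ carries a root system of full rank inside the ambient classical algebra. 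The removal of $\LRef$ is what makes this work: an exterior-power representation has image only the exterior-power image of a smaller $\sl_n$, so excluding these is exactly what lets the rank count force $\g$ to be the full $\sl$ or $\osp$ rather than a proper irreducible subalgebra.

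The main obstacle is precisely this surjectivity for $\mathcal{E}$ and $\mathcal{F}$: excluding that $\g$ is a proper reductive subalgebra of $\sl(V_\rho)$ or $\osp(V_\rho)$ acting irreducibly. The Cartan-rank argument does not settle every case on its own, so I expect to need a case analysis over the Shephard–Todd classification of irreducible pseudo-reflection groups — the infinite series $G(de,e,r)$ together with the exceptional groups — checking fullness representation by representation from explicit eigenvalue and character data and the fact that no proper root subsystem of the ambient classical algebra has full rank. The group $H_4$ is excluded because this verification fails there for at least one representation, which is why it appears as the sole exception in the statement.
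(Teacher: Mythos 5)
Your overall architecture coincides with the paper's: your reduction to \textbf{(A)} and \textbf{(B)} is exactly the paper's lemma \ref{lemtheo} (its condition (1) is your \textbf{(A)}, its conditions (2)--(3) are your \textbf{(B)}), your treatment of \textbf{(A)} --- immediate for $\QRef$, then parabolic descent plus a Shephard--Todd case analysis (Clifford theory for $G(de,e,r)$, explicit/computer checks for the exceptionals) --- is the paper's lemma \ref{lemplein} and Section 3, and your \textbf{(B)} is meant to follow from propositions \ref{propHHisom} and \ref{propduauxHH}. The genuine gap is in \textbf{(B)}. You claim that a linking isomorphism $\phi \colon \g_\rho \to \g_{\rho'}$ forces $\rho'_{\HH'} \simeq \rho_{\HH'}$ or $(\rho_{\HH'})^*$, on the grounds that automorphisms of $\sl(V)$ and $\osp(V)$ preserve the defining representation up to duality, triality excepted. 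This presupposes that abstractly isomorphic factors have the same type and the same defining dimension, which is precisely what the exceptional isomorphisms destroy: an $\sl_4$-factor and a $6$-dimensional \emph{orthogonal} factor $\so_6$ are abstractly isomorphic, and under $\sl_4 \simeq \so_6$ the $6$-dimensional defining representation corresponds to $\Lambda^2$ of the $4$-dimensional one --- neither it nor its dual --- so no contradiction with propositions \ref{propHHisom}/\ref{propduauxHH} arises, and the graph of $\phi$ would be a proper semisimple subalgebra surjecting onto both factors. Likewise $\so_3 \simeq \sl_2$, and $\so_4 \simeq \sl_2 \times \sl_2$ is not even simple, which silently breaks your Goursat step (``each simple summand maps isomorphically onto exactly one $\g_\rho$'' requires all $\g_\rho$ simple).

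The paper closes all of these holes at once with lemma \ref{lemrhoexcept}: for $W$ irreducible and not a $2$-reflection group, every self-dual $\rho \not\in \LRef$ has even dimension, and is of symplectic type when $\dim \rho \in \{4,6,8\}$. This kills $\so_3$ (odd dimension), $\so_4$ (non-simple), $\so_6$ (exceptional isomorphism) and $\so_8$ (triality) in one stroke, and it is not a formality: its proof occupies its own part of the case analysis (restriction to $W_0 = G(de,de,r)$, hyperbolicity of the invariant form, and results of \cite{IH2} on $4$-, $6$- and $8$-dimensional self-dual representations for the series; computer verification for the exceptionals). Your proposed substitute for the triality case (``the reflection spectra forbid'' a defining/spin identification) is not an argument --- nothing in the setup a priori prevents two $8$-dimensional representations outside $\LRef$ from being linked through triality; one must prove that $8$-dimensional \emph{orthogonal} representations outside $\LRef$ simply do not exist, which is what the lemma does. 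Finally, your explanation of the exclusion of $H_4$ is off: $H_4$ is a $2$-reflection (Coxeter) group, and the paper's own analysis never touches it --- all $2$-reflection groups are disposed of by citing \cite{IH2}, where $H_4$ is the one known exception; the verification you describe is only ever run for non-$2$-reflection groups.
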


The proof of the following lemma is postponed to the next section.

\begin{lemma} \label{lemrhoexcept} Let $W$ be irreducible and not a 2-reflection
group. Let $\rho \not\in \LRef$ with $\rho_{\HH'} \simeq (\rho_{\HH'})^*$.
Then $\dim \rho$ is an even integer and,
if $\dim \rho \in \{ 4, 6 , 8 \}$,
then $\rho$ is of symplectic type.
\end{lemma}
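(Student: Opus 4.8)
The plan is to read the eigenvalue structure of $\rho$ on $\SS$ off the self-duality hypothesis, deduce evenness from the way the invariant form pairs the two eigenspaces of each $\rho(s)$, and then, for $\dim\rho\in\{4,6,8\}$, rule out the orthogonal case by controlling the possible images $\rho(\HH')\subset\so(V_{\rho})$. Throughout set $n=\dim\rho$.

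\emph{Evenness.} First I would argue as in Proposition~\ref{propduauxHH}: since $\rho_{\HH'}\simeq(\rho_{\HH'})^*$, for every $s\in\SS$ the matrix $\rho(s)$ has at most two eigenvalues. As $\rho\notin\LRef$ we have $n\geq2$ (every one-dimensional character lies in $\LRef$, taking $k=0$), and since $\rho$ is irreducible and $\SS$ generates $W$ there is some $s\in\SS$ with $\rho(s)$ non-scalar, hence with exactly two distinct eigenvalues $\alpha\neq\beta$ of multiplicities $a,b$. Let $B$ be the nondegenerate $\rho(\HH')$-invariant form supplied by the self-duality. The element $x=\rho(s')\in\rho(\HH')$, with $s'=s-T_c/\#c$, is traceless and acts by the two eigenvalues $\alpha-\bar t,\beta-\bar t$ on the eigenspaces $V_\alpha,V_\beta$, where $\bar t=\tr\rho(s)/n$; both are nonzero because $\alpha\neq\beta$. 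Infinitesimal invariance $B(xv,w)+B(v,xw)=0$ then forces $B|_{V_\alpha}=B|_{V_\beta}=0$, so $V_\alpha,V_\beta$ are isotropic, and nondegeneracy of $B$ makes them dual to one another, whence $a=b=n/2$ and $n$ is even.

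\emph{Type in dimensions $4,6,8$.} Here I argue by contradiction, assuming $\rho$ of orthogonal type, so that $\g:=\rho(\HH')$ is a semisimple subalgebra of $\so(V_{\rho})=\so_n$ acting irreducibly, generated (see \cite{IH2}) by the balanced elements $x_s=\rho(s')$, each with eigenvalues $\pm\lambda_s$ of multiplicity $n/2$. I split according to whether $\g$ is simple. If $\g$ is \emph{not} simple, irreducibility gives a tensor factorisation $V_{\rho}\simeq U_1\ot\cdots\ot U_r$ of $\HH'$-modules with each $U_i$ an irreducible self-dual module over a simple factor of $\g$, and the type of $V_{\rho}$ is the product of the types of the $U_i$. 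Enumerating the factorisations of $n\in\{4,6,8\}$ into factors of dimension $\geq2$, every dimension-$2$ factor is the symplectic standard $\sl_2$-module; I would show that the orthogonal combinations ($2\times2$ in dimension $4$ and $2\times4$ in dimension $8$) force the relevant $\rho(s)$ to have four eigenvalues $1,\zeta,\xi,\zeta\xi$ collapsing to two balanced values, which for a reflection of order $>2$ (such exist since $W$ is not a $2$-reflection group) is impossible. If $\g$ \emph{is} simple, I use the exceptional isomorphisms: $\so_4$ is not simple (excluded); $\so_6\cong\sl_4$ realises $V_{\rho}$ as $\Lambda^2$ of a $4$-dimensional $\HH'$-module that lifts to a reflection representation, placing $\rho$ in $\LRef$, a contradiction; and the remaining simple orthogonal $8$-dimensional irreducibles (the vector module of $\so_8$, the spin module of $\so_7$, the half-spin modules of $\so_8$, and the adjoint of $\sl_3$) are eliminated one by one, either by exhibiting the corresponding $\LRef$-structure or by checking that their semisimple elements cannot all be balanced with two eigenvalues of multiplicity $4$.

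\emph{Main obstacle.} The delicate point is the simple $8$-dimensional analysis: unlike $\so_6$, the algebra $\so_8$ admits no reduction to a special linear algebra, so triality forces one to treat the vector and the two half-spin modules together and to reconcile these Lie-algebra-level descriptions with genuine $W$-module structures in order to conclude $\rho\in\LRef$. The companion difficulty, already present in dimension $4$, is to convert a factorisation or exterior-power description valid for the image $\rho(\HH')$ into a statement about $\rho$ as a $W$-representation; this is where the hypotheses that $W$ is irreducible and not a $2$-reflection group, combined with the balanced two-eigenvalue constraint, do the essential work.
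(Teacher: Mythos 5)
Your first half (evenness) is correct, and it takes a genuinely different and more intrinsic route than the paper. You deduce from proposition \ref{propduauxHH} (legitimately, since $\HH_s'\subset\HH'$) that each non-scalar $\rho(s)$ has exactly two eigenvalues, and then get $a=b=n/2$ directly from isotropy of the two eigenspaces of $\rho(s')$ under the invariant form. The paper instead proves evenness for the infinite series by restricting to the $2$-reflection subgroup $W_0=G(de,de,r)$ and invoking hyperbolicity of the form (\cite{IH2}, lemma 2.5, via the involutive skew-isometries coming from the $2$-reflections of $W_0$), and for the exceptional groups the whole lemma is vacuous by machine computation. Your argument is cleaner and avoids the case split.

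The second half (symplectic type in dimensions $4,6,8$), however, has a genuine gap, and one of its claimed steps is false. In the non-simple case in dimension $4$, the balanced-eigenvalue constraint does \emph{not} produce a contradiction: writing $x_s=u\ot 1+1\ot v$ with $u,v\in\sl_2$ semisimple, the requirement of two eigenvalues of multiplicity $2$ forces $u=0$ or $v=0$, i.e.\ $\rho(s)=A_s\ot I$ or $I\ot B_s$, and such an element has exactly two eigenvalues of multiplicity $2$, entirely compatible with $s$ of order $>2$ (take $A_s=\diag(1,\zeta)$). So the configuration $\g=\sl_2\times\sl_2\subset\so_4$ is internally consistent at the Lie-theoretic level; indeed it is actually \emph{realized}, by a $4$-dimensional representation of the Weyl group of type $F_4$. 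This is exactly why the paper, after restricting to $W_0=G(de,de,r)$, must cite the classification statement of \cite{IH2} (lemma 7.3: among $2$-reflection groups only $F_4$ carries such a representation), and must dispose of the exceptional groups by computer, where it turns out that $\rho(\HH_s')=\sl(V_\rho)$ for every $\rho\notin\LRef$, so that no self-dual $\rho$ outside $\LRef$ exists at all. No purely formal eigenvalue argument can replace this group-specific input. The same defect affects your simple cases: for $\so_6\simeq\sl_4$ and for the $8$-dimensional modules of $\so_7$, $\so_8$, $\sl_3$, the crucial step --- converting an $\HH'$-module description of $V_\rho$ into a statement about $\rho$ as a $W$-representation so as to place $\rho$ in $\LRef$ --- is precisely the difficulty you flag and never resolve. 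Since $W$ only normalizes $\rho(\HH')$, the candidate $4$-dimensional module lifts a priori only to a \emph{projective} representation of $W$, possibly twisted by the outer (duality) automorphism of $\sl_4$, and the obstruction to linearizing it is not addressed. The paper circumvents all of this by Clifford theory with respect to $W_0$ together with \cite{IH2} lemma 7.4 and proposition 7.6 for the infinite series, and by explicit computation for the exceptional types; without input of this kind your outline cannot be completed.
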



Let $\g \subset \mathcal{M}(\rho)$ a semisimple Lie subalgebra of $\mathcal{M}$.
For all $\rho \in \Irr(W)$, we denote $\mathcal{M}(\rho)$
and $\g(\rho)$
the simple Lie ideal of $\mathcal{M}$ and $\g$, respectively,
that corresponds to $\rho$, i.e. the orthogonal w.r.t. the Killing
form of the kernel of $\rho$ for the corresponding Lie algebras.
We have clearly $\mathcal{M}(\rho) \simeq \sl(V_{\rho})$ for $\rho \in \mathcal{E}$
and $\mathcal{M}(\rho) \simeq \osp(V_{\rho})$ for $\rho \in \mathcal{F}$.
By the above lemma, all these Lie ideals are simple, as the non-simple
case $\so_4 \simeq \sl_2 \times \sl_2$ does not arise.

\begin{lemma} \label{lemtheo} Let $W$ be irreducible and not a 2-reflection
group. If we have
\begin{enumerate}
\item For all $\rho \in \Irr(W)$, $\rho(\mathcal{M}) \simeq \rho(\g)$
\item $\rho^1_{\g} \simeq \rho^2_{\g}$ iff $\rho^1_{\mathcal{M}}
\simeq \rho^2_{\mathcal{M}}$
\item $\rho^1_{\g} \simeq (\rho^2_{\g})^*$ iff $\rho^1_{\mathcal{M}}
\simeq (\rho^2_{\mathcal{M}})^*$
\end{enumerate}
then $\g = \mathcal{M}$.
\end{lemma}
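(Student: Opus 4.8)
The plan is to exploit that both $\mathcal{M}$ and $\g$ are semisimple, so that $\g$ sits inside $\mathcal{M} = \bigoplus_{\rho} \mathcal{M}(\rho)$ (the sum running over the representatives indexing the simple factors) as a subdirect product, and then to use hypotheses (2) and (3) to forbid any diagonal identification of two distinct factors. First I would show that hypothesis (1) forces $\g$ to project \emph{onto} each simple factor $\mathcal{M}(\rho)$. For a factor $\mathcal{M}(\rho) \neq 0$, the induced representation $\rho_{\mathcal{M}}$ is the standard representation on $\mathcal{M}(\rho)$ and is trivial on every other factor; writing $\pi_\rho : \mathcal{M} \to \mathcal{M}(\rho)$ for the projection, one has $\rho(\mathcal{M}) = \rho_{\mathcal{M}}(\mathcal{M}(\rho))$ and $\rho(\g) = \rho_{\mathcal{M}}(\pi_\rho(\g))$, with $\rho_{\mathcal{M}}|_{\mathcal{M}(\rho)}$ faithful. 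Since $\rho(\g) \subseteq \rho(\mathcal{M})$ are finite-dimensional and abstractly isomorphic by (1), they coincide; faithfulness then gives $\pi_\rho(\g) = \mathcal{M}(\rho)$.

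Next comes the structural step. Decomposing $\g = \bigoplus_a \g_a$ into its simple ideals, surjectivity of each projection $\pi_\rho$ together with simplicity forces exactly one ideal $\g_{a(\rho)}$ to map isomorphically onto $\mathcal{M}(\rho)$, all others mapping to $0$ (two ideals surjecting onto a common simple factor would commute there, which is absurd). This defines a surjection $\rho \mapsto a(\rho)$ from the factors of $\mathcal{M}$ to the simple ideals of $\g$, and $\g$ is precisely the sum, over the fibres (``blocks'') of this map, of the corresponding diagonally embedded simple subalgebras. Consequently $\g = \mathcal{M}$ if and only if every block is a singleton, i.e. the map $\rho \mapsto a(\rho)$ is a bijection.

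The heart of the argument is to rule out a block containing two distinct factors $\mathcal{M}(\rho^1) \neq \mathcal{M}(\rho^2)$. In that case a single ideal $\g_a$ maps isomorphically, via $\phi_1,\phi_2$, onto both, and the restrictions of $\rho^1_\g$ and $\rho^2_\g$ to $\g_a$ are the standard representations $V_{\rho^1} \circ \phi_1$ and $V_{\rho^2} \circ \phi_2$, both being trivial on every other ideal of $\g$. Each factor here is of type $A$, $C$, or $D_n$ with $n \geq 5$: indeed the orthogonal factors $\osp(V_\rho)$ have even dimension and, by Lemma \ref{lemrhoexcept}, dimension outside $\{4,6,8\}$, which in particular excludes the triality case $\so_8$. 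For every such simple Lie algebra the outer automorphism group preserves the isomorphism class of the standard representation up to duality. Hence $V_{\rho^1} \circ \phi_1$ is isomorphic either to $V_{\rho^2}\circ\phi_2$ or to its dual, so that $\rho^1_\g \simeq \rho^2_\g$ or $\rho^1_\g \simeq (\rho^2_\g)^*$. By hypothesis (2) or (3) this yields $\rho^1_{\mathcal{M}} \simeq \rho^2_{\mathcal{M}}$ or $\rho^1_{\mathcal{M}} \simeq (\rho^2_{\mathcal{M}})^*$; but restricting either relation to the factor $\mathcal{M}(\rho^1)$, where $\rho^1_{\mathcal{M}}$ is faithful while $\rho^2_{\mathcal{M}}$ is trivial, gives a contradiction. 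Therefore every block is a singleton and $\g = \mathcal{M}$.

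\textbf{Main obstacle.} The delicate point is the automorphism input in the last paragraph: a nontrivial diagonal glueing of two factors automatically produces an \emph{isomorphism or a duality} between their standard representations only because no factor is of type $D_4$, where triality would send the vector representation to a spin representation and destroy this dichotomy. This is exactly where Lemma \ref{lemrhoexcept} (evenness of $\dim\rho$, and exclusion of $\dim\rho\in\{4,6,8\}$ for orthogonal type) is indispensable; without it the clean ``isomorphic or dual'' alternative, and hence the appeal to (2)--(3), would break down.
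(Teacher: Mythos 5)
Your proposal is correct and follows essentially the same route as the paper: hypothesis (1) identifies the simple ideals of $\g$ with the simple factors of $\mathcal{M}$, Lemma \ref{lemrhoexcept} excludes the exceptional orthogonal types so that a diagonal gluing of two distinct factors forces the two standard representations of $\g$ to be isomorphic or dual, and hypotheses (2)--(3) together with a kernel comparison then give the contradiction, exactly as in the paper's argument. The only cosmetic difference is that you justify the ``isomorphic or dual'' dichotomy via outer automorphism groups (no triality since $D_4$ is excluded), whereas the paper counts irreducible representations of the given dimension (at most two, dual to each other, since $\so_8$ is excluded) -- the same underlying fact.
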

\begin{proof}
Let $\mathcal{X} = \overline{\QRef} \cup \mathcal{E}/\approx
\cup \mathcal{F} / \approx$, so that $\mathcal{M} = \bigoplus_{\rho
\in \mathcal{X}} \mathcal{M}(\rho)$.

Each of the $\rho \in \Irr(W)$ provides a Lie ideal
$\g(\rho)$ of $\g$, namely the orthogonal of
$\Ker \rho_{\g}$ w.r.t. the Killing form of $\g$,
which is isomorphic to $\rho(\g)$, which is isomorphic
to $\rho(\mathcal{M})$ by (1) and simple by the above lemma,
as the case $\so_4$ is excluded.
Moreover, the type of $\rho(\g)$, for $\g \in \mathcal{X}$,
determines $\dim \rho$ by the above lemma, as the exceptional
isomorphisms $\so_3 \simeq \sl_2$ and $\so_6 \simeq \sl_4$
are excluded by the lemma for $\rho \not\in \LRef$,
and $\LRef \cap \mathcal{X} = \overline{\QRef}$.

Let $\rho^1,\rho^2 \in \mathcal{X}$ such that
$\g(\rho^1) = \g(\rho^2)$. This is possible if and only if
$\rho^1$ and $\rho^2$ have the same dimension, say $N$.
But the simple Lie ideals involved here have at most 2
irreducible representations of dimension $N$,
as the exceptional case $\so_8$ is excluded. Moreover,
when there are two of them, one is the dual of the other.
Thus $\rho^1_{\g} \simeq \rho^2_{\g}$ or $\rho^1_{\g} \simeq (\rho^2_{\g})^*$.
If $\rho^1_{\g} \simeq \rho^2_{\g}$ or $\rho^1_{\g} \simeq (\rho^2_{\g})^*$,
by (2) and (3) we have $\rho^1 = \rho^2$, as $\rho^1,\rho^2 \in
\mathcal{X}$. Then
$\g = \bigoplus_{\rho \in \mathcal{X}} \g(\rho) \simeq
\bigoplus_{\rho \in \mathcal{X}} \rho(\mathcal{M}) \simeq \mathcal{M}$,
hence $\dim \g = \dim \mathcal{M}$ and $\g = \mathcal{M}$.






\end{proof}

\bigskip
\noindent {\bf Example for $G_{25}$.}
We first describe $\Irr(W)$ for $W$ of type $G_{25}$. Let $j = \exp(2 \ii \pi/3)$,
and choose $s \in \SS$. It has order 3, and there is only one conjugacy
class of hyperplanes for $W$, so there are exactly 3 one-dimensional
$S_{\alpha} : W \to \kk^{\times}$ and $\XX(\rho) = \{ S_1 \}$
for every $\rho \in \Irr(W)$ with $\dim \rho \geq 2$. The 24 irreducible
representations are described in the following table.

$$
\begin{array}{|llll|}
\hline
\mbox{Dim.} & \mbox{Name} & \mbox{Parameters} & \mathrm{Sp}(s) \\
 1 & S_{\alpha} & \alpha \in \mu_3 & (\alpha) \\
 2 & U_{\alpha,\beta} & \{ \alpha,\beta \} \subset \mu_3, \alpha \neq \beta
& (\alpha,\beta) \\
3 & U'_{\alpha,\beta} & (\alpha,\beta) \in (\mu_3)^2, \alpha \neq \beta &
(\alpha,\alpha,\beta) \\
3 & V & & (1,j,j^2) \\ 
6 & V_{\alpha,\beta} & (\alpha,\beta) \in (\mu^3)^2, \alpha \neq \beta & (\alpha,\alpha,\alpha,\beta,\beta,\gamma) \\
8 & W_{\alpha} & \alpha \in \mu_3 & (\alpha,\alpha,\alpha,\alpha,\beta,\beta,\gamma,\gamma ) \\
9 & X & & (\alpha,\alpha,\alpha,\beta,\beta,\beta,\gamma,\gamma,\gamma) \\
9 & X^* & & (\alpha,\alpha,\alpha,\beta,\beta,\beta,\gamma,\gamma,\gamma) \\
\hline
\end{array}
$$
From these datas, we get that $\LRef = \{ S_{\alpha}, U_{\alpha,\beta},
U'_{\alpha,\beta} \ | \ \alpha,\beta \in \mu_3 \}$,
and $\QRef = \{ U_{\alpha,\beta }, U'_{\alpha,\beta} \ | \ \alpha,\beta \in
\mu_3 \}$. Moreover $U'_{\alpha,\beta} \approx U'_{\beta,\alpha}$
is the only nontrivial identification in $\QRef/\approx$.
We will prove later on (see proposition \ref{propduauxexcept}) that the
identifications are the same for $\HH'_s$ and $\HH'$,
since every pseudo-reflection has odd order.
It follows that the reflection ideal is made out of simple
ideals $\sl_2$ and $\sl_3$ for each pair $\{ \alpha,\beta \}$
and is isomorphic to $\sl_2^3 \times \sl_3^3$. Outside $\LRef$,
each $N$-dimensional representation corresponds to a distinct ideal $\sl_N$,
$N \in \{ 6,8,9 \}$, so
$$
\HH'_s \simeq 
\mathcal{H}' \simeq \sl_2^3 \times \sl_3^4 \times \sl_6^6 \times \sl_8^3\times \sl_9^2
$$

\section{Proof of the structure theorem}

We prove theorem \ref{theostruct} separately for the infinite series
and for the exceptional groups. Since this theorem
is known to hold for $W$ a 2-reflection group by \cite{IH2},
we assume that $W$ is not such a group.

In order to prove the theorem, by lemma \ref{lemtheo}, we only need
to prove lemma \ref{lemrhoexcept} as well as the following one.

\begin{lemma} \label{lemplein}
Let $W$ be an irreducible reflection group which is not a 2-reflection
group. For any $\rho \not\in \LRef$, $\rho(\HH_s') = \osp(V_{\rho})$
if $\rho_{\HH'} \simeq (\rho_{\HH'})^*$ and
$\rho(\HH_s') = \sl(V_{\rho})$ otherwise.
\end{lemma}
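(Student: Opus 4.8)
The plan is to prove that $\g := \rho(\HH_s')$ equals the full algebra $\sl(V_\rho)$, $\so(V_\rho)$ or $\sp(V_\rho)$ by exhibiting a maximal torus of the ambient algebra inside $\g$. Since $\HH_s = \HH(\SS)$ is reductive with center spanned by the $T_c$ and $\rho$ is irreducible, $\g$ is semisimple, acts irreducibly on $V_\rho$, and lies in $\sl(V_\rho)$. If $\rho_{\HH'} \simeq (\rho_{\HH'})^*$ then, as $\HH_s' \subset \HH'$, the invariant nondegenerate form is $\g$-invariant, so $\g \subset \osp(V_\rho)$, with $\osp = \so$ or $\sp$ according to its type. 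If on the contrary $\rho_{\HH'} \not\simeq (\rho_{\HH'})^*$, then by the contrapositive of proposition \ref{propduauxHH} we also have $\rho_{\HH_s'} \not\simeq (\rho_{\HH_s'})^*$, so $\g$ preserves no nondegenerate form and is contained in no $\osp(V_\rho)$.

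It then suffices to show that $\g$ contains a maximal torus of the ambient algebra. Indeed, a semisimple subalgebra whose root system has full rank must coincide with the whole algebra: $A_{N-1}$ has no proper closed subsystem of rank $N-1$ (as in \cite{IH2}), while the proper maximal-rank subsystems of $B_\ell, C_\ell, D_\ell$ — of the shape $\so_a \times \so_b$, $\sp_a \times \sp_b$, and so on — act reducibly on the defining module and are ruled out by the irreducibility of $\g$. The low-dimensional coincidences $\so_3 \simeq \sl_2$, $\so_4 \simeq \sl_2 \times \sl_2$, $\so_6 \simeq \sl_4$ are excluded for $\rho \not\in \LRef$ by lemma \ref{lemrhoexcept}. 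Thus everything reduces to filling out a Cartan subalgebra of the full algebra, exactly as in the $\QRef$ case of the earlier proposition.

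To produce this torus I would induct on the rank $r$ of $W$ via a maximal parabolic $W_0$ which is an irreducible reflection group of rank $r-1$. Writing $\SS_0 = \SS \cap W_0$, one has $\HH_s(W_0)' \subset \HH_s'$ and hence $\rho(\HH_s(W_0)') \subset \g$; decomposing $\rho|_{W_0}$ into $W_0$-irreducibles and applying the inductive hypothesis, together with part (2) of the earlier proposition for the constituents lying in $\QRef(W_0)$, yields a large reductive subalgebra $\mathfrak{l} \subset \g$ accounting for all but boundedly many toral directions. The remaining directions are supplied by the elements $\rho(s')$ for $s \in \SS$ not in $W_0$, which — by the $\leq 2$-eigenvalue dichotomy of proposition \ref{propduauxHH}, and since $\rho \not\in \LRef$ — can be chosen with large (regular) spectrum; adjoining these to $\mathfrak{l}$ and using the irreducibility of $\g$ on $V_\rho$ to connect the blocks of $\rho|_{W_0}$, one reaches a full Cartan. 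In the self-dual case one checks en route that the torus lies in $\so(V_\rho)$ or $\sp(V_\rho)$ as dictated by the type of the invariant form.

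As in theorem \ref{theostruct} the argument naturally splits: for the infinite series $G(d,1,r)$ and $G(de,e,r)$ the constituents of $\rho|_{W_0}$ and the spectra of the $\rho(s')$ are explicit from the multipartition combinatorics, so the torus is assembled directly; for the finitely many exceptional non-$2$-reflection groups it is verified case by case, lemma \ref{lemrhoexcept} being exactly what removes the dangerous isomorphisms. I expect the main obstacle to be the inductive step, since $\rho|_{W_0}$ is generally reducible and some of its constituents may lie in $\LRef(W_0)$ or be one-dimensional, so the parabolic alone does not deliver a full torus: the essential work is to show that the elements $\rho(s')$, constrained by lemma \ref{lemcercle} and proposition \ref{propduauxHH}, together with the irreducibility of $\g$, genuinely complete the Cartan and exclude every proper irreducible full-rank subalgebra.
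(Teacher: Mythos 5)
Your reduction steps are sound (semisimplicity and irreducibility of $\g = \rho(\HH_s')$, and the dichotomy $\g \subset \osp(V_\rho)$ versus $\g$ preserving no form, via proposition \ref{propduauxHH}), but the entire substance of the lemma --- actually producing a Cartan subalgebra of the ambient algebra inside $\g$ --- is left as a plan, and the mechanism you propose for completing the torus rests on a false premise. You claim the elements $\rho(s')$ for $s \in \SS \setminus \CW$ ``can be chosen with large (regular) spectrum'': this is impossible, since a pseudo-reflection of order $e$ has at most $e$ distinct eigenvalues in \emph{any} representation, and $e$ is bounded independently of $\dim \rho$; hence each $\rho(s')$ centralizes an enormous subalgebra of $\sl(V_\rho)$ and can contribute only a bounded number of new toral directions. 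Meanwhile the deficit you must fill is \emph{not} bounded: when constituents of $\Res_{\CW} \rho$ lie in $\LRef(\CW)$ or are one-dimensional, induction gives blocks whose rank is roughly the rank of $\CW$ rather than $\dim \rho_i - 1$, so the rank of your $\mathfrak{l}$ can fall short of $\dim \rho - 1$ by an amount that grows with $\dim \rho$. The full-Cartan argument works for $\QRef$ precisely because there $\dim \rho$ equals the rank of $W$; it does not scale to general $\rho \not\in \LRef$, which is why your own closing paragraph concedes that the ``essential work'' remains undone.

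The paper never attempts a full Cartan outside the $\QRef$ case; it uses two different devices. First, the half-rank criterion (lemma \ref{critsimple}, imported from \cite{IH2}): if $\h \subset \g \subset \sl_N$ are semisimple with $\g$ irreducible and $\rk \h > N/2$ (or $\rk \h = N/2$ with $V$ not self-dual), then $\g = \sl_N$ --- so one only needs rank exceeding $N/2$, not $N-1$. Second, for the infinite series $G(de,e,r)$ the subgroup exploited is not a maximal parabolic of lower rank but the \emph{same-rank} normal 2-reflection subgroup $\CW = G(de,de,r)$ of index $d$: Clifford theory makes $\Res_{\CW}\rho$ multiplicity-free with at most $d$ constituents, all conjugate under $t$ and of equal dimension, and \cite{IH2} gives exactly the image of $\HH_0'$ on each block; summing these block ranks beats $N/2$, and lemma \ref{critsimple} concludes (the self-dual case being settled by uniqueness of the invariant form preserved by an irreducibly acting $\osp_0$). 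The exceptional groups are handled by direct machine computation of $\dim \rho(\HH_s')$, with $G_{32}$ treated via a $G_{25}$-parabolic plus, again, the half-rank criterion. If you wish to salvage your outline, the repair is precisely to replace ``full Cartan'' by the half-rank criterion and the lower-rank parabolic by $G(de,de,r)$ --- but that is the paper's proof, not a completion of yours.
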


We prove both lemmas in the two cases of the general series and
the exceptional cases.

\subsection{The infinite series $G(de,e,r)$}
\label{proofstructgdeer}

We let $W = G(de,e,r)$ and $W_0 = G(de,de,r)$. Since $W_0$
is a 2-reflection group, the structure of its infinitesimal Hecke algebre
$\mathcal{H}_0 \subset \mathcal{H}_s$ is known by \cite{IH2}. Recall
that $W_0$ is an index $d$ subgroup of $W$, and that $W = W_0 \rtimes < t >$
where $t \in \SS$ is the pseudo-reflection $\diag(\zeta,1,\dots,1)$ for $\zeta$ some
primitive $d$-th root of 1. We let $\alpha : W \to \mu_d$ with kernel $W_0$,
$\alpha(t) = \zeta$. When needed, we will use the standard labelling
by $de$-tuples of partitions of total size $r$ of the irreducible
representations of $G(de,1,r)$, as in e.g. \cite{ARIKI,ARIKIKOIKE,MARINMICHEL}.


We extract from \cite{IH2} the following criterium. For $\h$ a semisimple
Lie algebra, we let $\rk \h$ denote its semisimple rank.

\begin{lemma} \label{critsimple}
Let $\h \subset \g \subset \sl_N$ be semisimple Lie algebras with $\g$ acting
irreducibly on $\kk^N$. If $\rk \h > N/2$, or if $\rk \h = N/2$ with $\kk^N$
not selfdual as a $\g$-module, then $\g = \sl_N$
\end{lemma}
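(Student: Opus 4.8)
The plan is to reduce the statement to a bound on the semisimple rank of $\g$ itself and then invoke the classification of irreducible representations of semisimple Lie algebras by their smallest dimensions. Since $\h \subset \g$, a Cartan subalgebra of $\h$ is contained in one of $\g$, so $\rk \h \leq \rk \g$; both hypotheses therefore give $\rk \g \geq N/2$, together with the extra information, in the borderline case, that $\kk^N$ is not selfdual. It thus suffices to classify the semisimple $\g \subset \sl_N$ acting irreducibly on $\kk^N$ with $\rk \g \geq N/2$.

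First I would decompose $\g = \g_1 \oplus \dots \oplus \g_k$ into simple ideals. As $\g \subset \sl_N$ acts faithfully and irreducibly, $\kk^N \simeq V_1 \otimes \dots \otimes V_k$ with each $V_i$ a nontrivial irreducible $\g_i$-module, so $N = \prod_i d_i$ with $d_i = \dim V_i$, and $\rk \g = \sum_i \rk \g_i$. The classical input is that a simple Lie algebra of rank $\ell$ has no nontrivial irreducible representation of dimension $< \ell + 1$, and has one of dimension $\leq 2\ell$ only for the defining representations of the classical types and their duals. Hence $\rk \g_i \leq d_i - 1$, and the elementary inequality $\sum_i (d_i - 1) \leq \frac{1}{2} \prod_i d_i$, valid for $k \geq 2$ and all $d_i \geq 2$ with equality only when $k = 2$ and some $d_i = 2$, yields $\rk \g \leq N/2$ for every non-simple $\g$.

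Consequently the strict case $\rk \g > N/2$ forces $k = 1$, i.e. $\g$ simple; and among simple Lie algebras acting irreducibly the bound $\rk \g > N/2$ together with the dimension estimates singles out exactly the type $A$ defining representation and its dual, so $\g = \sl_N$. In the borderline case $\rk \g = N/2$ the same estimates leave, beyond $\sl_N$, only a short explicit list: among simple $\g$ the defining representations of $\sp_N$ and, for even $N$, of $\so_N$; and among non-simple $\g$ the rank-two tensor configurations $\sl_2 \otimes \sl_m$ on $\kk^2 \otimes \kk^m$ with $m = N/2$. The symplectic and orthogonal defining representations are selfdual, hence discarded by the non-selfduality hypothesis.

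The step I expect to be the main obstacle is precisely this borderline case: one must enumerate, uniformly across all Dynkin types (the exceptional types being immediately excluded, since there $d_i$ far exceeds $2\,\rk \g_i$), every semisimple $\g$ of rank exactly $N/2$, and then determine for each whether $\kk^N$ is selfdual. Controlling the non-simple tensor configurations of rank $N/2$ is the delicate point; the selfduality hypothesis is tailored to discard the orthogonal and symplectic alternatives, and reconciling it with these tensor configurations — exploiting the bilinear-form data carried by $\kk^N$ as a $\g$-module — is where I expect the real work to lie.
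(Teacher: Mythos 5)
Your strict case $\rk \h > N/2$ is sound and is essentially the paper's own route (the paper cites \cite{IH2} lemma 3.1 for it; the content is exactly the dimension count you give). The genuine gap is precisely where you flag it, and it is fatal: the borderline non-simple configuration $\g = \sl_2 \times \sl_m$ acting on $\kk^2 \otimes \kk^m$, $m = N/2$, is listed in your proposal but never eliminated, and the non-selfduality hypothesis cannot eliminate it. Indeed $(\kk^2)^* \simeq \kk^2$ as an $\sl_2$-module, while $(\kk^m)^* \not\simeq \kk^m$ as an $\sl_m$-module for $m \geq 3$, so $\kk^2 \otimes \kk^m$ is \emph{not} selfdual. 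Taking $\h = \g = \sl_2 \times \sl_m$ with $m \geq 3$, one has $\rk \h = 1 + (m-1) = N/2$ and a non-selfdual module, yet $\g \neq \sl_N$. So the ``real work'' you defer to the end cannot be carried out from the stated hypotheses: what your analysis has actually uncovered is a counterexample to the statement as printed, not a hard remaining case.

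This also explains why you could not reconstruct the paper's argument: the paper's proof is not self-contained but a chain of citations (\cite{IH2} lemmas 3.1, 3.3(I) and 3.4), and the step you are missing is exactly the simplicity claim delegated to lemma 3.3(I). For that cited statement to be correct it must carry assumptions on $\h$ beyond its rank; and indeed, everywhere the present lemma is used in this paper, $\h$ is a block-diagonal sum of $\sl$'s and $\osp$'s acting on the summands of a direct-sum decomposition of $\kk^N$ (images of restrictions of representations), an embedding type which excludes the tensor-type $\h$ above. A correct self-contained proof must either add such a structural hypothesis on $\h$ to the statement, or verify it in each application; with the bare hypotheses as written, no argument can close your gap.
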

\begin{proof}
If $\rk \g > N/2$ this is \cite{IH2} lemma 3.1. If $\rk \g = N/2$, then
by \cite{IH2} lemma 3.3, since $N < (N/2+1)^2$ for
all $N$, we get that $\g$ is simple. Then \cite{IH2} lemma 3.4
shows that the only possibility is that $\g = \sl_N$.
\end{proof}

Let $\rho \in \Irr(W)\setminus \LRef$. If $\rho(t) = u \in \kt$,
then $u = \zeta^{-k}$ for some $k$ and $\rho \otimes \alpha^k(W) = \rho(W_0)$
so the result follows from \cite{IH2}, as $(\rho \otimes \alpha^k)(\HH_s') =
\rho(\HH_0')$. From now on we assume
that $\rho(t) \not\in \kt$,
and we decompose $\Res_{W_0} \rho = \rho_1 + \dots + \rho_m$ in irreducible
components. By Clifford theory, $\rho_i \not\simeq \rho_j$ for $i \neq j$.
We denote $\eps$ the sign character on $W_0$.
Since the $\rho_i$ are deduced from each other through conjugation by $t$,
the condition $\forall i \ \rho_i \not\in \LRef(W_0)$ is equivalent to $\rho_1 \not\in \LRef(W_0)$.
We note that we can choose an ordering on $\rho_1,\dots,\rho_m$
such that $V_{\rho} = V_{\rho_1} \oplus \dots \oplus V_{\rho_m}$
with $V_{\rho_{b+k}} = \rho(t^k)V_{\rho_b}$ and $\rho_{k+1} \simeq
\rho_k \circ \Ad(t)$. A consequence is that $p = |\Sp \rho(t)| \geq m$.
Recall that, if $p \geq 3$, then $\rho_{\HH_s'}$ is not
selfdual. This is thus the case if $m \geq 3$.

\subsubsection{The case $m = 1$, $\rho_1 \not\in \LRef$}

If $\rho_1 \not\simeq \rho_1^* \otimes \eps$, then by \cite{IH2}
we already have $\rho(\HH_s') = \sl(V_{\rho})$, so we can assume
$\rho_1 \simeq \rho_1^* \otimes \eps$, and let $\h = \rho(\HH_0')$.
By \cite{IH2} this is a simple Lie algebra $\osp_0(V_{\rho})$ of rank $(\dim \rho)/2$
that preserves some nondegenerate bilinear form over $V_{\rho}$
and acts irreducibly on $V_{\rho}$.

If $(\rho_{\HH'})^* \not\simeq \rho_{\HH'}$
then by lemma \ref{critsimple}
we get $\g = \sl(V_{\rho})$, so we can assume that
 $(\rho_{\HH'})^* \simeq \rho_{\HH'}$ (in particular,
$\rho(t)$ admits exactly two eigenvalues $u,v$),
hence $\g \subset \osp(V_{\rho})$.
Since $\h$ acts irreducibly on $V_{\rho}$, it
can preserve only one such form (up to scalar), so
from $\h \subset \g \subset \osp(V_{\rho})$ and
$\h \subset \osp_0(V_{\rho})$
we get $\osp(V_{\rho}) = \osp_0(V_{\rho})$ and $\h = \g = \osp(V_{\rho})$.

\subsubsection{The case $m \geq 2$, $\rho_1 \not\in \LRef$, $\rho_{\HH_s'} \not
\simeq (\rho_{\HH_s'})^*$}

We need to show that $\g = \sl(V_{\rho})$.

First assume $\rho_1^* \otimes \eps \simeq \rho_1$.
Since $\rho_1 \not\in \LRef$ we have $\dim \rho_1 \geq 3$
(see \cite{IH2} lemma 2.18) hence $\dim \rho \geq 6$.
This implies $r \geq 3$,
as it is easily checked that the irreducible representations
of $G(de,e,2)$ have dimension at most 2. It follows
that $W_0$ has a single class of reflections. Since $W_0 \neq H_4$, the
$\rho_i$ then correspond to distinct ideals of rank
$(\dim \rho_i)/2$ and $\g$ contains
a semisimple Lie algebra of rank $m(N/2m) = N/2$. Then lemma
\ref{critsimple} implies $\g = \sl(V_{\rho})$.

Now assume $\rho_1^* \otimes \eps \not\simeq \rho_1$. If $\forall i,j \ \ 
\rho_i \not\simeq \rho_j^* \otimes \eps$, then the rank of $\h$ is
$m (N/m-1) = N-m > N/2$ iff $\dim \rho_i = N/m > 2$, which holds
true since $\rho_i \not\in \LRef$.

Otherwise, the map $\rho_i \mapsto \rho_i^* \otimes \eps$ induces
a permutation of the $\rho_i$, as $(\rho_1 \circ \Ad(t^i))^* \otimes \eps
= (\rho_1^* \otimes \eps)\circ \Ad(t^i)$,
hence $m = 2q$ for some $q \geq 1$, and $\h$ has rank $h = q(N/m-1) = N/2 - m/2$.
We have $N < (h+1)^2$ iff $N^2 - 2(m-4)N + (m-2)^2 > 0$. This trinomial
in $N$ has for reduced discriminant $-2(2m-6)<0$ for $m \geq 4$.
Checking separately the case $m = 2$, we get that
$N<(h+1)^2$. It follows that
$\g$ is simple by \cite{IH2} lemma 3.3 (I). Moreover,
since $N/m = \dim \rho_i > 2$
we have $h= N/2- m/2 > N/4$ hence $\rk \g > N/4$. 
Moreover $N = m \dim \rho_1$
with $\dim \rho_1 \geq 3$ and $m \geq 2$ hence $N \geq 8$.
Since $V_{\rho}$ is not selfdual as a $\g$-module and $N\geq 8$ is even,
by \cite{IH2} lemma 3.4 we get that the only possibility for
$\g \neq \sl_N$
implies $N = 10$ and $\g \simeq \sl_5$, or 
$N=16$ and $\rk \g = 5$. The latter case is excluded, since
it implies $\dim \rho_i = 4$, $m = 4$
and $\rk \g \geq (N-m)/2 = 6$ ; the former is excluded because
then $\dim \rho_i = 5$ and
$\g \simeq \sl_5 \simeq  \rho(\HH_0')$ implies $\g
= \rho(\HH_0')$, which contradicts the
fact that $\rho_{\g}$ is irreducible whereas $\rho_{\HH_0}'$
is not. It follows that $\g = \sl(V_{\rho})$ in this case.

\subsubsection{The case $\rho_1 \not\in \LRef, m \geq 2$ and $\rho_{\HH_s'} 
\simeq (\rho_{\HH_s'})^*$}
\label{soussection313}

This implies $m=2,p=2$. We have $\g \subset \osp(V_{\rho})$
and want to show $\g = \osp(V_{\rho})$.

By Clifford theory we have that $d = \# \alpha(W)$ even, so $d = 2 q$,
that $\rho \otimes \alpha^q \simeq \rho$, and that $\rho_1 \circ \Ad(t) = \rho_2$.
Since $\alpha^q(t) = -1$, we get from $\alpha^q \otimes \rho \simeq
\rho$ and $p = 2$ that $\rho(t)$ has for eigenvalues $u,-u$
with the same multiplicities. Letting $u = \zeta^{-k}$ we get
that $\rho' = \rho \otimes \alpha^k$ has the same restriction
to $W_0$ than $\rho$ with $\rho'(t)^2 = 1$. Hence $\rho'$ 
factorizes through the classical morphism $G(de,e,r) \onto
G(de,de/2,r)=W_1$ that preserves $\mathfrak{S}_r$ and maps
$t$ to $t$. Since $W_1$ is a 2-reflection group, 

the conclusion that $\g = \rho(\HH_s')$
thus follows from \cite{IH2}, as $\rho_{\HH_s}$ is selfdual
iff $\rho' \simeq (\rho')^*\otimes \eps$ for $\eps$ the sign character of
$W_1$, and $\rho \not\in \LRef 
\Rightarrow  \rho' \not\in \LRef(W_1)$.

\subsubsection{The case $\rho_1 \in \LRef(W_0)$}

The list of representations in $\LRef(W_0)$ can be found in \cite{IH2}.
We check on this list that the only possibilities for $\rho(t)$
not to be a scalar (in which case we would have $\rho \in \LRef$)
are the following two, with $\dim \rho = 6$.

The former one is when $r = 4$, $de$ is even, $m=2$, $\rho_1$ has dimension $3$ and, up to tensoring
by some power of $\alpha$, $\rho$ is the restriction to $W$ of a representation
$(\la,\emptyset,\dots,\la,\emptyset,\dots)$ of $G(de,1,4)$, with
$\la \in \{ [2],[1,1] \}$. But then $\rho(t)^2 = 1$ and $\g$ coincides
with the image of (the semisimple part of) the infinitesimal Hecke algebra
of $G(de,de/2,4)$, so the conclusion follows from \cite{IH2}.

The latter is when $r = 3$, $m=3$, $\dim \rho_1 = 2$ and,
up to tensoring by some power of $\alpha$, $\rho$ is the restriction to $W$ of the
representation $([1],\emptyset,\dots,[1],\emptyset,\dots,[1],\emptyset,\dots,)$ of
$G(de,1,3)$. It is easily checked (e.g. through the character table of $G(3,3,3)$) that $\rho_i \simeq \rho_i^* \otimes \eps$,
hence the $\rho_i$ correspond to three distinct ideals of $\h$
and $\h$ has rank 3. Then $\rk \h \geq 6/2$, and $\rho_{\HH'_s}$
is not selfdual, as $m > 2$ and by the argument in the previous section.
By lemma \ref{critsimple} it follows that $\g = \sl(V_{\rho})$,
and this concludes the proof for the $G(de,e,r)$.

\subsubsection{Proof of lemma \ref{lemrhoexcept}}

We let $\rho \not\in \LRef$ with $\rho_{\HH'} \simeq (\rho_{\HH'})^*$
and denote $\rho_0$ the restriction of $\rho$ to $W_0$.
Assume first $\dim \rho = 4$.
If $\rho_0$ is irreducible, then
$\rho_0 \simeq \rho_0^* \otimes \eps$, where
$\eps$ denotes the sign character of $W_0$. Then $\rho$ is of orthogonal
type if and only if $\eps \into S^2 \rho_0$, which is possible
only for $W_0$ of type $F_4$ by \cite{IH2} lemma 7.3. Since
$W_0 = G(de,de,r)$ is not of this type this excludes this case.
If $\rho_0$ is not irreducible, then its irreducible
components have dimension 2 or 1, and in particular belong to $\LRef(W_0)$.
We saw above that this situation does not occur.

Now assume $\dim \rho = 8$.
As before,
the irreducible components of $\rho_0$ do not belong to $\LRef(W_0)$,
and this excludes the case where $\rho_0$ admits 4
irreducible components. The case of $\rho_0$ irreducible (then $\eps \into
S^2 \rho_0$) is
excluded by \cite{IH2} proposition 7.6.
If $\rho_0$ admits 2 irreducible components $\rho_1,\rho_2$ of dimension 4,
then, since $(\rho_0)_{\HH'}$ would preserve a nondegenerate
symmetric bilinear form, so would either $(\rho_1)_{\HH_0'}$ or $(\rho_2)_{\HH_0'}$.
This is excluded by \cite{IH2} lemma 7.3, which concludes the case $\dim \rho = 8$.

Finally, assume that $\dim \rho = 6$. If $\rho_0$ is irreducible,
we are done by \cite{IH2} lemma 7.4. Otherwise, it admits
either 2 or 3 irreducible components. The case of 3 irreducible
components is excluded by $(\rho_{\HH'})^* \simeq \rho_{\HH'}$,
hence $\rho_0$ is the sum of 2 irreducible
3-dimensional components. Then $d$ is even and, up to tensoring by some
multiplicative character, $\rho$ corresponds to a
multipartition of the form $([2],\dots,[2],\dots)$
that factorize through $W_1 = G(de,de/2,r)$. This
situation is then dealt with again by \cite{IH2} lemma 7.4.

Now assume $\dim \rho = 2N+1>1$. Then $\rho$ is of symmetric type if
$\rho(\HH') \simeq \so_{2N+1}$. Let $\rho_1$ denote
an irreducible component of $\rho_0$. We have $\dim \rho_1 > 1$, and also
$\rho_1(W_0) \not\subset \kk^{\times}$, otherwise
$\rho(W)$ is abelian and $\dim \rho = 1$. It follows that
$W_0$ act by the sign character $\eps$ on this form, hence this form
admits involutive skewisometries afforded by the 2-reflections in $W_0$.
This implies that this form is hyperbolic (see \cite{IH2}, lemma 2.5), hence $2N+1$ is
even, a contradiction that concludes the proof of lemma \ref{lemrhoexcept}
for the general series.

\subsection{Exceptional groups}

Among the 34 exceptional groups in the Shephard-Todd classification,
15 of them are 2-reflection groups, and were dealt with in \cite{IH2}.
The others mostly have rank 2, plus 3 groups of higher rank, namely $G_{25}$,
$G_{26}$ and $G_{32}$.

For a given $\rho \in \Irr(W)$, given an explicit matrix model $\rho : W \to \GL(V_{\rho})$,
the dimension of $\rho(\HH_s')$ can be computed by the following algorithm :
start from the $\rho(s)$ for $s \in \SS$, extract a basis for the spanned
subspace in $\gl(V_{\rho})$, add to this subspace the images of this basis under
the $\ad(\rho(s))$ for $s \in \SS$, and then iterate the process until the
dimension stops increasing ; this gives the dimension $d$ of $\rho(\HH_s)$,
and $\rho(\HH_s')$ has dimension $d$ or $d-1$ if one of the $\rho(s)$ for
$s \in \SS$ has nonzero trace. Of course this dimension is the same
for $\rho$ and for $\rho \otimes \eta$ if $\eta \in \Hom(W,\kt)$,
which reduces sometimes drastically (notably for $W$ of type $G_{19}$)
the number of representations to check.

This algorithm is tractable provided that $\dim \rho$ is reasonably
small, and that explicit matrix models are available. These two
conditions are satisfied by all these exceptional groups,
except for $G_{32}$.

For $W \neq G_{32}$, the irreducible representations have dimension
at most 9, explicit matrix models were computed by various authors, and are now
included in the CHEVIE package for the GAP3 software, which is available at
\url{http://www.math.jussieu.fr/~jmichel/}. We check using the above algorithm that $\dim \rho(\HH_s') =
\dim(\rho)^2 - 1$, that is $\rho(\HH_s') = \sl(V_{\rho})$,
for all $\rho \not\in \LRef$ (in particular this shows that $\rho_{\HH'}
\not\simeq (\rho_{\HH'})^*$ for all these representations).
This proves lemmas \ref{lemrhoexcept} and \ref{lemplein} in these cases,
and the structure theorem by lemma \ref{lemtheo}.

The only group $W$ remaining to be dealt with is $G_{32}$. In that case
we use a parabolic subgroup $W_0$ of type $G_{25}$ and, assuming the
theorem proved in type $G_{25}$, we consider, for each
$\rho \in \Irr(W)$, the (reductive) Lie algebra generated by the $\rho(s)$,
$s \in \SS \cap W_0$, and let $\h(\rho)$ denote its semisimple part
(that is, its intersection with $\sl(V_{\rho})$). By the analysis carried out
above of the infinitesimal Hecke algebra of type $G_{25}$, we know
to which simple Lie ideal corresponds each irreducible representation
of $W_0$. It follows that the rank of $\h$ can be deduced
from the knowledge of the induction table from $W_0$ to $W$, which is
known and included in CHEVIE. Using it, we get that
$\h(\rho)$ has rank greater than $\dim \rho/2$ for all $\rho \not\in \LRef$,
that is $\rho(\HH'_s) = \sl(V_{\rho})$ by lemma \ref{critsimple}.
This completes the proofs of lemmas \ref{lemrhoexcept} and \ref{lemplein},
and of the structure theorem.

\section{Generic Hecke algebras}

We let $\mathcal{C}_+$ denote the set of conjugacy
classes $c$ of hyperplanes with $e_c > 2$,
and $\mathbbm{A}(W) = \prod_{c \in \mathcal{C}_+} \kk^{e_C -1}$.
A typical element of $\mathbbm{A}(W)$ is denoted $\bula = (\ula^c)_{c \in \mathcal{C}_+}$,
with $\ula^c = (\la^c_1,\dots,\la^c_{e_c-1})$. We denote
$\SS_+ = \{ s \in \SS \ | \ s^2 \neq 1 \}$,
$\SS_0 = \{ s \in \SS \ | \ s^2 = 1 \}$, and let
$\HH(\bula)$ denote the Lie subalgebra of $\kk W$ generated by
$\SS_0$ and the $\la_1^c s + \dots + \la^c_{e_c-1} s^{e_c-1}$
for $s \in \SS_+$ with reflecting hyperplane in $c \in \mathcal{C}_+$.

\subsection{Preliminaries}

Let $\zeta \in \kk$ a primitive $n$-th root of 1. Note that, if $n$ is
the order of some pseudo-reflection of $W$, then $\kk$ contains
$\mu_n(\C)$ (e.g. because the defining representation is realizable over $\kk$).

In $\kk^n$ we define $v_i = (1,\zeta^i,(\zeta^i)^2,\dots,(\zeta^i)^{n-1})$
for $i \in [0,n-1]$. Since the corresponding (Vandermonde) determinant
is invertible, these elements form a basis of $\kk^n$. It follows that,
for $(i,j) \neq (k,l)$ with $i \neq j$ or $k \neq l$, $H_{i,j,k,l} = \{ \ula \in \kk^n \ | \ 
<\ula | v_i - v_j - v_k + v_l> = 0 \}$
is an hyperplane of $\kk^n$.

\begin{prop} Let $P \in A = \kk[X]/(X^n -1)$ and $\zeta \in \kk$ a primitive
$n$-th root of 1. Then
\begin{enumerate}
\item $P$ generates $A$ as a unital algebra if and only if,
for all $r,s \in [0,n-1]$, $r \neq s \Rightarrow P(\zeta^r) \neq P(\zeta^s)$.
\item the unital subalgebra of $A \otimes_{\kk} A = \kk[X,Y]/(X^n -1,Y^n -1)$
generated by $P(X) + P(Y)$ contains $X+Y$ if and only if,
for all $i,j,k,l$, $\zeta^i + \zeta^j \neq \zeta^k + \zeta^l \Rightarrow
P(\zeta^i) + P(\zeta^j) \neq P(\zeta^k) + P(\zeta^l)$.
\item letting $P = P_{\ula} = \la_0 + \la_1 X + \dots + \la_{n-1} X^{n-1}$,
for $\ula \not\in H_{i,j,k,l}$,
$P(\zeta^i) - P(\zeta^j) = P(\zeta^k) - P(\zeta^l)$
implies $(i,j) = (k,l)$, or $i=j$ and $k=l$.
\end{enumerate}
\end{prop}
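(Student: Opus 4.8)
The plan is to reduce everything to the evaluation isomorphism coming from the Chinese Remainder Theorem. Since $\kk$ has characteristic $0$ and $\zeta$ is a primitive $n$-th root of $1$, the polynomial $X^n-1 = \prod_{r=0}^{n-1}(X-\zeta^r)$ has $n$ distinct roots, so $A = \kk[X]/(X^n-1) \simeq \prod_{r=0}^{n-1}\kk[X]/(X-\zeta^r) \simeq \kk^n$ via $P \mapsto (P(\zeta^r))_{0\leq r\leq n-1}$. This reduces (1) and (2) to an elementary fact about a product of fields $\kk^m$, which I would isolate first: for $g=(g_t)_t \in \kk^m$, the unital subalgebra $\kk[g]$ it generates equals $\{(Q(g_t))_t \mid Q \in \kk[T]\}$, and for $h=(h_t)_t \in \kk^m$ one has $h \in \kk[g]$ if and only if $h$ is constant along the fibres of $g$, i.e. $g_s = g_t \Rightarrow h_s = h_t$. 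The nontrivial direction is Lagrange interpolation: letting $c_1,\dots,c_p$ be the distinct values of $g$, the fibre condition lets one set $Q(c_a)$ to be the common value of $h$ on $g^{-1}(c_a)$ and interpolate.

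For (1) I would apply this with $m=n$, $g=(P(\zeta^r))_r$: then $\kk[g]=\kk^n$ (that is, $P$ generates $A$) if and only if every $h$ lies in $\kk[g]$, which holds exactly when the fibres of $g$ are singletons, i.e. the $P(\zeta^r)$ are pairwise distinct. For (2) I would identify $A \otimes_{\kk} A \simeq \kk^n \otimes_{\kk} \kk^n \simeq \kk^{n^2}$ by evaluation at the pairs $(\zeta^i,\zeta^j)$, under which $P(X)+P(Y)$ becomes $g=(P(\zeta^i)+P(\zeta^j))_{i,j}$ and $X+Y$ becomes $h=(\zeta^i+\zeta^j)_{i,j}$. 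By the fact above, $X+Y \in \kk[\,P(X)+P(Y)\,]$ if and only if $h$ is constant on the fibres of $g$, i.e. $P(\zeta^i)+P(\zeta^j)=P(\zeta^k)+P(\zeta^l) \Rightarrow \zeta^i+\zeta^j=\zeta^k+\zeta^l$, which is precisely the contrapositive of the stated implication.

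Part (3) is a purely linear computation. Writing $v_i=(1,\zeta^i,\dots,\zeta^{(n-1)i})$ and $P=P_{\ula}=\sum_m \la_m X^m$, one has $P(\zeta^i)=\langle \ula \mid v_i \rangle$, hence $P(\zeta^i)-P(\zeta^j)-P(\zeta^k)+P(\zeta^l)=\langle \ula \mid v_i-v_j-v_k+v_l\rangle$. Since the Vandermonde vectors $v_0,\dots,v_{n-1}$ form a basis of $\kk^n$, the vector $v_i-v_j-v_k+v_l$ vanishes precisely when $\{i,l\}=\{j,k\}$ as multisets, that is, exactly in the two cases $(i,j)=(k,l)$ or ($i=j$ and $k=l$). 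In every other case $v_i-v_j-v_k+v_l \neq 0$, so $H_{i,j,k,l}$ is a genuine hyperplane and, for $\ula \not\in H_{i,j,k,l}$, the pairing $\langle \ula \mid v_i-v_j-v_k+v_l\rangle$ is nonzero, whence $P(\zeta^i)-P(\zeta^j) \neq P(\zeta^k)-P(\zeta^l)$. This establishes (3) by contraposition.

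The genuine content is concentrated in the interpolation lemma and in transporting the algebra structure correctly through the two isomorphisms. I expect the only point requiring real care to be the identification of $A \otimes_{\kk} A$ with functions on the pair-index set $[0,n-1]^2$ together with the resulting translation of \emph{``contains $X+Y$''} into the fibre condition; everything else is bookkeeping around the CRT decomposition. No deep input is needed, and the primitivity of $\zeta$ with $\operatorname{char}\kk=0$ enters only to guarantee the $n$ distinct roots that make the evaluation isomorphism available.
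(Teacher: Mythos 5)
Your proof is correct and takes essentially the same approach as the paper: the evaluation (CRT) isomorphisms $A \simeq \kk^n$ and $A \otimes_{\kk} A \simeq \kk^{n^2}$, Lagrange interpolation to characterize the unital subalgebra generated by an element, and the identification $P(\zeta^i) = \langle \ula \mid v_i \rangle$ with the Vandermonde basis for (3). Your explicit fibre-condition lemma simply makes precise the interpolation step that the paper leaves implicit.
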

\begin{proof} We consider $\pi_i : A \onto \kk[X]/(X - \zeta^i) = \kk$
that maps $X$ to $\zeta^i$. Then $\pi = \bigoplus_{i=0}^{n-1} \pi_i : A \to \kk^n$
is a ring isomorphism, and $\pi(P)$ generates $\kk^n$ if and only
if, by Lagrange interpolation, $\pi_r(P) \neq \pi_s(P) \neq 0$
whenever $r \neq s$. Then (1) follows.
We then consider $\pi_{i,j} : A \otimes_{\kk} A \onto \kk[X,Y]/(X- \zeta^i,
Y-\zeta^j) = \kk$ mapping $X \mapsto \zeta^i$, $Y \mapsto \zeta^j$
and $\pi = \bigoplus_{i,j} : A \otimes_{\kk} A \simeq \kk^{n^2}$.
Once again by Lagrange interpolation we get (2).
Then (3) follows from the identification $P = \la_0 + \la_1 X
+ \dots + \la^{n-1} X \mapsto \ula$ of $A$ with $\kk^n$,
for which $P(\zeta^i) = <\ula | v_i>$.

\end{proof}

We will also need the following lemma

\begin{lemma} \label{critsltens} Let $\g \subset \sl(V)$ be a complex semisimple Lie algebra
acting irreducibly on $V$, and $\rho : \g \to \sl(V)$
the defining representation such that $\rho^{\otimes 2}$
admits at most 3 irreducible components. Then $\g$ is a simple Lie algebra.
Moreover, if $\rho^{\otimes 2}$ admits two
irreducible components, then $\g = \sl(V)$ ; if
$\rho^{\otimes 2}$ admits three irreducible components
and $\rho^* \simeq \rho$, then $\g$ preserves some
nondegenerate bilinear form, and $\g = \osp(V_{\rho})$ w.r.t. this form.
\end{lemma}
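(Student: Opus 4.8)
The plan is to treat the three assertions in turn, reducing each to a statement about the $\g$-module structure of $V_\rho \otimes V_\rho$, of $V_\rho \otimes V_\rho^*$, and of the standard realisations of $\so(V_\rho)$ and $\sp(V_\rho)$ inside $\End(V_\rho)$. First I would establish simplicity. Write $\g = \g_1 \oplus \dots \oplus \g_k$ as a sum of simple ideals. Since $\rho$ is faithful and irreducible, $V_\rho = V_1 \boxtimes \dots \boxtimes V_k$ with each $V_i$ a faithful irreducible $\g_i$-module, so $\dim V_i \geq 2$. Then $S^2 V_i$ and $\Lambda^2 V_i$ are both nonzero, so each $V_i \otimes V_i$ has at least two irreducible components; as the number of components of an external tensor product multiplies, $\rho^{\otimes 2} = \boxtimes_i (V_i \otimes V_i)$ has at least $2^k$ components. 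The hypothesis of at most three therefore forces $k = 1$, i.e. $\g$ is simple.

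For the two-component case the key observation is that $\dim \End_\g(V_\rho \otimes V_\rho) = \dim \End_\g(V_\rho \otimes V_\rho^*)$, both being equal to $\dim (V_\rho \otimes V_\rho \otimes V_\rho^* \otimes V_\rho^*)^{\g}$. Now $\rho^{\otimes 2} = S^2 V_\rho \oplus \Lambda^2 V_\rho$ with the two summands of different dimensions, hence non-isomorphic; if $\rho^{\otimes 2}$ has exactly two components these are $S^2 V_\rho$ and $\Lambda^2 V_\rho$, each irreducible and occurring with multiplicity one, so $\dim \End_\g(V_\rho \otimes V_\rho) = 2$. Consequently $\dim \End_\g(V_\rho \otimes V_\rho^*) = 2$. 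Since $V_\rho \otimes V_\rho^* \simeq \kk\,\Id \oplus \sl(V_\rho)$ as $\g$-modules with the trivial summand appearing exactly once by Schur's lemma, the complement $\sl(V_\rho)$ must be $\g$-irreducible. As $\g$ is a nonzero $\g$-submodule of $\sl(V_\rho)$ (via the adjoint action of $\g \subset \sl(V_\rho)$ on itself), we conclude $\g = \sl(V_\rho)$. The hard part here is precisely the non-self-dual subcase, where $V_\rho \otimes V_\rho$ and $V_\rho \otimes V_\rho^*$ are genuinely different modules; the equality of commutant dimensions is exactly the bridge that makes the argument uniform in the self-dual and non-self-dual situations.

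For the three-component self-dual case, $\rho \simeq \rho^*$ yields a nondegenerate invariant bilinear form on $V_\rho$, which is either symmetric (orthogonal type) or skew (symplectic type), so $\g$ sits inside $\so(V_\rho)$ or $\sp(V_\rho)$. Under the identification $V_\rho \otimes V_\rho^* \simeq V_\rho \otimes V_\rho$ afforded by the form, one has $\so(V_\rho) \simeq \Lambda^2 V_\rho$ and $\sp(V_\rho) \simeq S^2 V_\rho$ as $\g$-modules, while the trivial representation sits in the complementary symmetric power (in $S^2 V_\rho$ for orthogonal type, in $\Lambda^2 V_\rho$ for symplectic type). Counting then finishes the proof: the summand carrying the trivial contributes at least two components (the trivial plus its nonzero traceless part), and the other summand contributes at least one, so having exactly three components forces the latter summand --- namely $\osp(V_\rho)$, realised as $\Lambda^2 V_\rho$ or $S^2 V_\rho$ --- to be $\g$-irreducible. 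Once more $\g$ is a nonzero $\g$-submodule of this irreducible module, whence $\g = \osp(V_\rho)$. The low-dimensional coincidences that could spoil irreducibility of the adjoint module (notably $\so_4 \simeq \sl_2 \times \sl_2$) are automatically excluded, since there the adjoint is reducible and would contribute a fourth component.
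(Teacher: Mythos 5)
Your proof is correct, and for the heart of the lemma it takes a genuinely different route from the paper. The simplicity step coincides with the paper's: there, a splitting $\g = \g_1 \times \g_2$ forces $V = V_1 \otimes V_2$ and hence at least four irreducible components in $\rho^{\otimes 2}$, which is exactly your $2^k$ count specialised to $k=2$. But for the two remaining assertions the paper stops and simply invokes \cite{expo}, prop.~1 (the author's earlier tensor characterisation of standard representations), whereas you prove them from first principles. Your bridge is the equality $\dim \End_{\g}(V \otimes V) = \dim \End_{\g}(V \otimes V^*)$, both being $\dim\bigl(V^{\otimes 2} \otimes (V^*)^{\otimes 2}\bigr)^{\g}$: since $S^2V$ and $\Lambda^2V$ have different dimensions, two components means each occurs with multiplicity one, so the commutant of $V \otimes V^*$ is $2$-dimensional; as the trivial summand of $\End(V)$ has multiplicity exactly one by Schur, the traceless part $\sl(V)$ must be $\g$-irreducible, and the nonzero adjoint submodule $\g \subset \sl(V)$ then fills it. The self-dual three-component case is handled by the same counting after identifying $V^* \simeq V$ via the (necessarily symmetric or alternating) invariant form, with the trivial sitting in one of $S^2V, \Lambda^2V$ and $\osp(V)$ being the other summand, again forced to be irreducible and hence equal to $\g$. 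What the paper's route buys is brevity, at the price of an external reference; what yours buys is a self-contained, elementary argument (pure Schur's lemma and invariant counting, no classification) that moreover treats the self-dual and non-self-dual situations uniformly. All the steps check out, including the faithfulness of each $V_i$ in the simplicity argument and the nonvanishing of the complementary summands used in the counts.
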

\begin{proof}
If $\g = \g_1 \times \g_2$ with the $\g_i$ nontrivial
semisimple Lie algebras, then $V = V_1 \otimes V_2$ with
$V_i$ an irreducible representation of $\g_i$. But then
$\rho^{\otimes 2} = V_1^{\otimes 2} + V_1 \otimes V_2 + V_2 \otimes V_1
+ V_2^{\otimes 2}$ would admit at least 4 irreducible components. It
follows that $\g$ is simple. The conclusion follows from \cite{expo} prop. 1.
\end{proof}

\subsection{Generic infinitesimal Hecke algebras}

For $c \in \mathcal{C}_+$ we 
choose a primitive $e_c$-root $\zeta_c$ of 1, and we
denote $v_i^c \in \kk^{e_c-1}$ the vector $\zeta_c^i, (\zeta_c^i)^2,\dots,
(\zeta_c^i)^{e_c-1}$. Identifying $\AAA(W)$ with
$\kk^{\sum_{c \in \mathcal{C}_+}
(e_c-1)}$ we denote $< \cdot | \cdot >$ the natural scalar product on $\AAA(W)$. 
We introduce in $\mathbbm{A}(W)$ the following hyperplane arrangements
$$
\begin{array}{l}
\mathcal{L}_1   = \{ \Ker <v_r^c - v_s^c| \cdot > \ | \ r \neq s , c \in \mathcal{C}_+ \} \\
\mathcal{L}_2   = \{ \Ker <v_i^c + v_j^c - v_k^c - v_l^c | \cdot > \ | \ \zeta_c^i + \zeta_c^j \neq \zeta_c^k + \zeta_c^l, c \in \mathcal{C}_+ \} \\
\mathcal{L}_3   = \{ \Ker <v_i^c - v_j^c - v_k^c + v_l^c | \cdot > \ | 
\ (i,j) \neq (k,l) \mbox{ and } (i\neq j \mbox{ or } k \neq l)
, c \in \mathcal{C}_+ \} \\
\end{array}
$$
Note that these three sets do not depend on the choice
of the primitive $e_c$-roots $\zeta_c$. Also note that $\mathcal{L}_1 \subset \mathcal{L}_2
\cap \mathcal{L}_3$.
For $\rho \in \Irr(W)$, we let $\rho_{\HH(\bula)}, \rho_{\HH(\bula)'}$
the representations of $\HH(\bula), \HH(\bula)'$ respectively,
that are induced by $\rho$. We let $\AAA^{\times}(W) = \AAA(W)
\setminus \bigcup (\mathcal{L}_1 \cup \mathcal{L}_2 \cup \mathcal{L}_3)
=
\setminus \bigcup (\mathcal{L}_2 \cup \mathcal{L}_3)
$.

Let $c$ be a conjugacy class in $W$. We let $p : \kk W \onto Z(\kk W)$
denote the natural projection $p(g) = (1/|W|)\sum_{h \in g} h gh^{-1}$.

Let $c$ be a conjugacy class of reflections in $\SS$. We denote $\SS/W$ the
set of such classes.
For $c \in \SS_0$ we denote $T_c(\bula) = T_c =\sum_{s \in c} s \in Z(\kk W)$ and for
$c \in \SS_+$ we let
$$T_c(\bula) = \sum_{s \in c } \sum_{i=1}^{e_c-1} \la^c_i s^i=
\sum_{i=1}^{e_c-1} \la^c_i \sum_{s \in  c } s^i \in Z(\kk W).
$$
For $s \in \SS_0$ we let $s(\bula) = s$ and for $s \in \SS_+$
we let $s(\bula) = \sum_{i=1}^{e_c-1}\la_i s^i$.
Clearly $T_c(\bula) = p(s(\bula))|c|$ for any $s \in c \subset \SS$.

\begin{prop} For every value of $\bula$, the Lie algebra $\HH(\bula)$
is reductive.
\end{prop}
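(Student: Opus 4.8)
The goal is to show that $\HH(\bula)$ is reductive for every value of the parameter $\bula$, without assuming $\bula$ lies in the open dense locus $\AAA^\times(W)$. The key structural fact to exploit is the criterion from \cite{IH2} prop. 2.2, already invoked repeatedly above: a Lie subalgebra of $\kk W$ is reductive as soon as it is generated by elements that are, in a suitable sense, semisimple and behave like the images of reflections. My plan is to reduce the reductiveness of $\HH(\bula)$ to that very criterion by producing an explicit set of generators whose properties are independent of whether $\bula$ is generic.

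First I would recall that $\HH(\bula)$ is by definition generated by $\SS_0$ together with the elements $s(\bula) = \sum_{i=1}^{e_c-1}\la_i^c s^i$ for $s \in c \subset \SS_+$. Each generator coming from $\SS_0$ is a genuine reflection $s$ with $s^2 = 1$, so $\ad(s)$ is semisimple; these are exactly the generators handled in the 2-reflection case. The new input is the elements $s(\bula)$. The central point is that $s(\bula)$ is a polynomial in $s$ with no constant term, i.e. $s(\bula) = Q(s)$ where $Q(X) = \sum_{i=1}^{e_c-1}\la_i^c X^i \in \kk[X]$ and $Q(0)=0$. Since $s$ has finite order $e_c$, the element $s(\bula)$ lies in the commutative semisimple subalgebra $\kk[s] \simeq \kk[X]/(X^{e_c}-1)$ of $\kk W$, and hence $s(\bula)$ is a semisimple element of the associative algebra $\kk W$. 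Consequently $\ad(s(\bula)) = \ad(Q(s))$ is a semisimple endomorphism of $\kk W$: its eigenvalues are differences of eigenvalues of left/right multiplication by the semisimple element $s(\bula)$.

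The heart of the argument is then to invoke the general reductiveness criterion: a Lie subalgebra of an associative algebra that is generated by semisimple elements (elements $x$ for which $\ad(x)$ acts semisimply, equivalently whose action in every representation is semisimple) and that is closed under the relevant operations is reductive, because the trace form restricted to it is nondegenerate. Concretely, every generator of $\HH(\bula)$ acts semisimply in each representation $\rho$ of $W$, since $\rho(s(\bula)) = Q(\rho(s))$ is a polynomial without constant term in the semisimple matrix $\rho(s)$, hence is itself semisimple. This is precisely the hypothesis under which \cite{IH2} prop. 2.2 guarantees reductiveness, and that proposition makes no genericity assumption on the coefficients. I would therefore conclude that $\HH(\bula)$ is reductive for all $\bula$, reserving the finer statement about the dimension of its center and the identity $\HH(\bula)'=\HH'$ for the generic locus $\AAA^\times(W)$ treated in the subsequent propositions.

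The step I expect to be the main obstacle is making rigorous the claim that the generators act semisimply in a way that feeds exactly into the hypotheses of the cited criterion: one must check that it is enough for the \emph{generators} to act semisimply, rather than needing semisimplicity of all elements of the Lie algebra (which would be false in general). The resolution is that reductiveness follows from the semisimplicity of the adjoint action of a generating set together with the fact that $\HH(\bula)$ is a $\ast$-subalgebra for the natural involution of $\kk W$ sending $g \mapsto g^{-1}$; since $Q(s)^{\ast}=\sum_i \bar\la_i^c s^{-i}$ and $s^{-i}=s^{e_c-i}$, one verifies that $\HH(\bula)$ is stable under an antilinear antiautomorphism preserving a positive definite Hermitian form, forcing the Lie algebra to be reductive. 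This stability, rather than any genuinely new computation, is what must be verified carefully, and it holds uniformly in $\bula$, which is exactly why the conclusion is valid for all parameter values.
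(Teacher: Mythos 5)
Your first paragraph is fine as far as it goes: each generator $s(\bula)$ is indeed a semisimple element of $\kk W$ (it lies in the \'etale algebra $\kk[s]$), and it therefore acts semisimply in every representation of $W$. But the ``general reductiveness criterion'' you then invoke does not exist: a Lie subalgebra of an associative algebra generated by elements $x$ with $\ad(x)$ semisimple need \emph{not} be reductive, and no such statement is contained in \cite{IH2} prop.\ 2.2, which concerns Lie algebras generated by a set of \emph{group elements} (reflections), not by arbitrary semisimple linear combinations. Concretely, in $\sl_2(\C)$ the elements $h = \diag(1,-1)$ and $h+e$, with $e$ the strictly upper triangular nilpotent, are both semisimple, yet $[h,h+e] = 2e$, so they generate the Borel subalgebra, which is solvable non-abelian, hence not reductive, and on which the trace form is degenerate. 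So semisimplicity of the generators alone proves nothing; you correctly sense this in your last paragraph, but the repair you propose there fails.

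The failure is that $\HH(\bula)$ is \emph{not} in general stable under the antilinear antiautomorphism $x \mapsto x^*$ induced by $g \mapsto g^{-1}$. One has $s(\bula)^* = \sum_i \overline{\la^c_i}\, s^{e_c - i}$, and there is no reason for this element to lie in the Lie algebra generated by $\SS_0$ and the $s(\bula)$: already for $W$ cyclic of order $3$ (type $G(3,1,1)$), $\HH(\bula) = \C\,(\la_1 s + \la_2 s^2)$ is one-dimensional, and it is $*$-stable iff $|\la_1| = |\la_2|$. Indeed, $*$-stability of the generators is precisely the condition $\la^c_k = \overline{\la^c_{e_c-k}}$ that the paper has to impose in \S 6 in order to construct the \emph{compact} form $\HH_c(\bula)$; it is a genuine restriction on $\bula$, not something that ``holds uniformly.'' (Note the example does not contradict the proposition itself --- a one-dimensional Lie algebra is reductive --- it only kills your proof mechanism.) The paper's proof gets around exactly this point: after extending scalars to $\C$ and fixing a $\rho(W)$-invariant unitary form, each generator acts by a \emph{normal} operator, being a polynomial in the unitary operator $\rho(s)$; a normal operator is a polynomial in its own adjoint, so if $U \subset V_\rho$ is invariant under the associative algebra generated by the generators, then $U^{\perp}$ is invariant under each adjoint, hence under each generator, hence under that whole algebra. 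Normality is exploited generator by generator, and at no point does one need $\HH(\bula)$ itself to be closed under $*$; that is the idea missing from your argument, and without it the proof does not go through for general $\bula$.
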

\begin{proof}
We consider the direct sum of all the irreducible representations of
$W$. This provides a faithful representation of $\HH(\bula)$. In order
to prove that $\HH(\bula)$ is reductive it is sufficient to prove
that this representation is semisimple which means that,
for every $\rho \in \Irr(W)$, the representation of the envelopping
Lie algebra $\mathsf{U} \HH(\bula)$ induced
by $\rho$ is semisimple. Up to extending scalars we can assume $\kk = \C$,
and then assume that $\rho(W)$ preserves some unitary form over
$\End(V_{\rho})$. Then the elements $x = \la_1^c s + \dots + \la_{e_c-1}^c s^{e_c-1}$
act by normal operator (i.e. endomorphisms commuting with their
adjoints), which means that $\rho(x)$ is a polynomial
of its adjoint. In particular, if $U \subset V_{\rho}$ is
stable under $\rho(\mathsf{U} \HH(\bula))$, then its orthogonal
is stable under each of the elements $\rho(x)$, hence under $\rho(\mathsf{U}\HH(\bula))$.
As a consequence $V_{\rho}$ is completely reducible and the conclusion
follows. 
\end{proof}

\begin{prop}
\begin{enumerate}
\item For each $\rho \in \Irr(W)$, $\rho_{\HH(\bula)}$
and $\rho_{\HH(\bula)'}$ are irreducible as soon as $\bula \not\in \bigcup \mathcal{L}_1$.
\item If $\bula \not\in \bigcup \mathcal{L}_1$, the
center of $\HH(\bula)$ has dimension $|\SS/W|$, is spanned by
the $T_c(\bula)$ for $c \in \SS/W$ and $\HH(\bula)'$ is generated
by the $s(\bula) - T_c(\bula)/|c|$.

\item Let $\rho \in \mathrm{QRef}$ or $\rho \not\in
\LRef$, and $\bula \not\in \bigcup (\mathcal{L}_1 \cup \mathcal{L}_2)$.
Then $\rho(\HH(\bula)') = \rho(\HH')$.
\item For $\bula \in \AAA(W)^{\times}$, $\HH(\bula)' = \HH'$.
\end{enumerate}
\end{prop}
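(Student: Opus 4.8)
The plan is to prove (1)--(4) in order, the first two reducing to reductiveness plus the elementary criterion on $\kk[X]/(X^{e_c}-1)$. For (1), note that for $s\in\SS_+$ of order $e_c$ the generator $s(\bula)$ is the image of the polynomial $P_{\ula^c}$ under the evaluation $\kk[X]/(X^{e_c}-1)\to\kk W$, $X\mapsto s$; the hypothesis $\bula\notin\bigcup\mathcal{L}_1$ is exactly the assertion (first part of the preliminary proposition on $\kk[X]/(X^n-1)$) that $P_{\ula^c}$ separates the $e_c$-th roots of unity, hence generates $\kk[X]/(X^{e_c}-1)$, so that $s$ lies in the unital associative algebra generated by $s(\bula)$. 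Therefore the associative envelope of $\HH(\bula)$ contains every $s\in\SS$ and hence all of $\kk W$; so $\rho_{\HH(\bula)}$ is irreducible for each $\rho\in\Irr(W)$, and since $\HH(\bula)$ is reductive its centre acts by scalars and $\rho_{\HH(\bula)'}$ is irreducible too.

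For (2) I would use the $W$-equivariant projection $p:\kk W\onto Z(\kk W)$. Irreducibility from (1) forces $Z(\HH(\bula))=\HH(\bula)\cap Z(\kk W)$; since $p$ fixes $Z(\kk W)$ and kills every commutator (the derived algebra lies in $(\kk W)'$), it maps $\HH(\bula)$ onto its centre and sends the Lie generators to the $T_c(\bula)/|c|$, so that $Z(\HH(\bula))$ is spanned by the $T_c(\bula)$, $c\in\SS/W$. Linear independence, hence dimension $|\SS/W|$, reduces --- since a reflection fixes a single hyperplane --- to one hyperplane class at a time; there one uses that distinct nontrivial powers of a fixed reflection are never $W$-conjugate (a consequence of Stanley's theorem) together with the root-separation of $\mathcal{L}_1$, by a Vandermonde argument that is routine rather than the crux. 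Decomposing each generator into its central part plus the remainder $s(\bula)-T_c(\bula)/|c|\in\HH(\bula)'$ gives the stated generators of $\HH(\bula)'$.

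Part (3) is where $\mathcal{L}_2$ enters. As the powers $s^i$ are reflections we have $\HH(\bula)\subset\HH$, so $\g:=\rho(\HH(\bula)')$ is a semisimple subalgebra of $\mathfrak{m}:=\rho(\HH')$ acting irreducibly on $V_\rho$ by (1); I would compare the two via their commutants on $V_\rho\ot V_\rho$. In an eigenbasis of $\rho(s)$ the element $\rho(s(\bula))$ acts on $e_i\ot e_j$ with eigenvalue $P(\zeta^i)+P(\zeta^j)$ while $\rho(s)$ acts with eigenvalue $\zeta^i+\zeta^j$, and $\bula\notin\bigcup\mathcal{L}_2$ says precisely that the first partition of basis vectors refines the second; hence any endomorphism commuting with the $\rho(s(\bula))$-action commutes with the $\rho(s)$-action, for every $s\in\SS$. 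Thus $\End_\g(V_\rho\ot V_\rho)\subset\End_{\rho(\HH(\SS)')}(V_\rho\ot V_\rho)$, and for $\rho\in\QRef$ or $\rho\notin\LRef$ the structure theorem \ref{theostruct}, lemma \ref{lemplein}, and the proposition giving $\rho(\HH(\SS)')=\sl(V_\rho)$ on $\QRef$ all yield $\rho(\HH(\SS)')=\rho(\HH')=\mathfrak{m}$, so the two commutants coincide. Consequently $V_\rho\ot V_\rho$ has the same number of $\g$- and $\mathfrak{m}$-components --- two when $\mathfrak{m}=\sl(V_\rho)$, three and self-dual when $\mathfrak{m}=\osp(V_\rho)$, the low-dimensional coincidences being excluded by lemma \ref{lemrhoexcept} --- and lemma \ref{critsltens} forces $\g=\mathfrak{m}$.

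Finally, for (4) I would apply lemma \ref{lemtheo} to $\g=\HH(\bula)'\subset\HH'=\mathcal{M}$. Its hypothesis (1) follows from part (3) on $\QRef$ and on $\Irr(W)\setminus\LRef$ and propagates to the remaining $\rho=\eta\ot\Lambda^k\rho_0\in\LRef$ by the exterior-power compatibility (established for $\HH(\SS)'$ and valid verbatim here), since $\rho_0\in\Ref$ gives $\rho_0(\g)=\rho_0(\mathcal{M})=\sl(V_{\rho_0})$. The content is hypotheses (2) and (3): the inclusion $\g\subset\mathcal{M}$ makes one implication automatic, so one must rule out accidental isomorphisms and dualities under the parametrized generators. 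An isomorphism $\rho^1_\g\simeq\rho^2_\g$ is a nonzero $\g$-fixed vector in $\Hom(V_{\rho^1},V_{\rho^2})$, on which $s(\bula)$ acts through the differences $P(\zeta^i)-P(\zeta^j)$; a duality $\rho^1_\g\simeq(\rho^2_\g)^*$ is a $\g$-fixed vector in $V_{\rho^1}\ot V_{\rho^2}$, where $s(\bula)$ acts through the sums $P(\zeta^i)+P(\zeta^j)$. The difference-injectivity of $\mathcal{L}_3$ (resp. the sum-injectivity of $\mathcal{L}_2$) forces such a fixed vector to have the rigid single-block form, which is exactly the shape produced by a twist $\eta\in\XX(\rho)$ (resp. by a form $\chi$); this is precisely the description of isomorphism and duality for $\HH'$ in propositions \ref{propHHisom} and \ref{propduauxHH}, so hypotheses (2) and (3) hold and lemma \ref{lemtheo} gives $\HH(\bula)'=\HH'$. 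I expect these last two hypotheses to be the main obstacle: they require redoing the spectral analysis of \ref{propHHisom} and \ref{propduauxHH} with $s(\bula)$ in place of $s$, and it is exactly there that the hyperplanes $\mathcal{L}_2$ and $\mathcal{L}_3$ are engineered to be avoided.
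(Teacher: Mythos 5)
Your proof is correct and follows essentially the same route as the paper's: part (1) via the Lagrange-interpolation criterion attached to $\mathcal{L}_1$, part (2) via the projection $p:\kk W \onto Z(\kk W)$ and reductiveness, part (3) via the tensor square together with $\mathcal{L}_2$ and lemma \ref{critsltens}, and part (4) by verifying the hypotheses of lemma \ref{lemtheo} through the spectral analysis governed by $\mathcal{L}_1,\mathcal{L}_2,\mathcal{L}_3$ and propositions \ref{propHHisom}, \ref{propduauxHH}. The only differences are cosmetic (you phrase (3) with commutants where the paper compares images of enveloping algebras, and (4) with invariant vectors in $\Hom(V_{\rho^1},V_{\rho^2})$ and $V_{\rho^1}\otimes V_{\rho^2}$ where the paper writes conjugation equations with the scalars $\om_s$), plus your explicit check of hypothesis (1) of lemma \ref{lemtheo} on $\LRef$ via the exterior-power compatibility, a point the paper leaves implicit.
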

\begin{proof}
(1) For each $s \in \SS_+$ with reflection hyperplane $H \in c \in \mathcal{C}_+$,
$\rho(s)$ is a polynomial in $\sum \la^c_i s^i$ by the lemma
if $\bula \not\in \bigcup \mathcal{L}_1$. This implies that the
algebra generated by $\rho(\HH(\bula))$ and $\rho(\HH_s)$ are the
same, hence $\rho_{\HH(\bula)}$ is irreducible.
and $\rho(\HH(\bula))$ is reductive. This implies that
$\rho(\HH(\bula)') = \rho(\HH(\bula))'$ is semisimple and that
$\rho_{\HH(\bula)'}$
is irreducible.
(2) Since, under this conditition, $\HH(\bula)$ generates
$\kk W$ as a unital algebra, we have $Z(\HH(\bula)) \subset Z(\kk W)$.
Since $\HH(\bula)$ is reductive we have $\HH(\bula) = Z(\HH(\bula))
\oplus \HH(\bula)'$ hence $p(\HH(\bula)) = p(Z(\HH(\bula)))
= Z(\HH(\bula))$.
Let $E \subset \kk W$ be the subspace spanned by the generators of
$\HH(\bula)$ and, for a conjugacy class $c$, denote $\delta_c$
the linear form on $\kk W$ defined by $\delta_c(g) = 1$
if $g \in c$, $\delta_c(g) = 0$ otherwise. We have
$(\kk W)' = \cap_c \Ker \delta_c = \Ker p$. Since $\HH(\bula)$
is generated by $E$ we have $\HH(\bula) = E +
\HH(\bula)'$ hence $p(\HH(\bula)) = p(E)$, hence $Z(\HH(\bula))$
is spanned by the $T_c(\bula)$ for $c \subset \SS$. Finally,
the $s(\bula) - p(s(\bula))$ generate a Lie algebra
containing $\HH(\bula)'$, as $p(s(\bula)) \in Z(\HH(\bula))$
and contained in $\HH(\bula) \cap \Ker p  = \HH(\bula)'$,
which concludes the proof of (2).
We now prove (3). For such a $\rho$, we let $\tilde{\rho}$ denote the
representation $\rho_{\HH_s'} \otimes \rho_{\HH_s'}$
extended to the envelopping algebra $\mathsf{U} \HH_s'$.
First assume that $\rho(\HH') = \rho(\HH_s') = \sl(V_{\rho})$. Then $\tilde{\rho}
(\mathsf{U} \HH_s') \simeq \End(S^2V_{\rho}) \oplus \End(\Lambda^2 V_{\rho})$.
Since $\bula \not\in \bigcup \mathcal{L}_2$ we get by the lemma that,
for each $s \in \SS_+$ with reflection hyperplane
$H \in c \in \mathcal{C}_+$,
$\tilde{\rho(s)}$ is a polynomial in $\tilde{\rho}(\sum \la^c_i s^i)$.
Then $\tilde{\rho}(\mathsf{U} \HH(\bula)') = \tilde{\rho}(\mathsf{U}
\HH_s')
= \End(S^2V_{\rho}) \oplus \End(\Lambda^2 V_{\rho})$.
In particular, $\rho_{\HH(\bula)'} \otimes \rho_{\HH(\bula)'}$
admits two irreducible components,
hence $\rho(\HH(\bula)') = \sl(V_{\rho})
= \rho(\HH')$ by lemma \ref{critsltens}. Now assume that $\rho(\HH') = \rho(\HH_s') = \osp(V_{\rho})$.
By the same argument, $\rho_{\HH(\bula)'} \otimes \rho_{\HH(\bula)'}$
admits three irreducible components
and $\rho_{\HH'} \simeq
(\rho_{\HH'})^* \Rightarrow 
\rho_{\HH_s'} \simeq
(\rho_{\HH_s'})^*$,
hence $\rho(\HH(\bula)') = \osp'(V_{\rho})$
for some nondegenerate bilinear form by lemma \ref{critsltens}. On the other hand,
$\rho(\HH(\bula)') \subset \rho(\HH') = \osp(V_{\rho})$,
hence $\osp'(V_{\rho}) \subset \osp(V_{\rho})$ which implies
$\osp'(V_{\rho}) = \osp(V_{\rho})$ and proves (3).
Let
$s \in \SS$  and $\rho^1,\rho^2 \in \Irr(W)$
with $\rho^1_{\HH(\bula)'} \simeq \rho^2_{\HH(\bula)'}$. Identifying
$V_{\rho^1} = V_{\rho^2} = V$, there
exists $Q \in \GL(V)$ and $\om_s \in \kk$ with $Q P_{\ula^c}(\rho^2(s)) Q^{-1}
= P_{\ula^c}(\rho^1(s)) + \om_s$ if $s \in \SS_+$ 
, and $Q \rho^2(s) Q^{-1} = \rho^1(s) + \om_s$
otherwise.
If $\rho^1(s) \not\in \kt$, for $s^2 = 1$ this implies
$\om_s = 0$ as in the proof of prop. \ref{propHHisom} ; otherwise,
$\rho^1(s)$ admits at least two eigenvalues $\zeta_c^i \neq \zeta_c^j$.
Then $P(\zeta_c^i) - \om_s = P(\zeta_c^k)$ and
$P(\zeta_c^j) - \om_s = P(\zeta_c^l)$ for some $k,l$
hence
$P(\zeta_c^i) - P(\zeta_c^k) - P(\zeta_c^j) + P(\zeta_c^l) = 0$.
If $\om_s \neq 0$ we have $i \neq k$ and $j \neq l$. Since $i \neq j$
we get a contradiction for $\bula \in \AAA(W)^{\times}$, thus $\om_s = 0$.
We thus get $\rho^2 = \rho^1 \otimes \eta$ for some $\eta \in \XX(\rho^1)$
and $\rho^1_{\HH'} \simeq \rho^2_{\HH'}$.
We now assume $(\rho^1_{\HH(\bula)'})^* \simeq \rho^2_{\HH(\bula)'}$.
For $s \in \SS$ with reflection hyperplane in $c \in \mathcal{C}_+$,
the equation $Q P_{\ula^c}(\rho^2(s))Q^{-1} = - ^t P(\rho^1(s)) + \om_s$
with $\rho^1(s) \not\in \kt$ with eigenvalues $\zeta_c^i \neq \zeta_c^j$,
implies that there exists $\zeta_c^k \neq \zeta_c^i$,
$\zeta_c^l \neq \zeta_c^i$ with $-P(\zeta_c^i)+ \om_s = P(\zeta_c^k)$,
$-P(\zeta_c^j)+ \om_s = P(\zeta_c^l)$ whence $P(\zeta_c^i) - P(\zeta_c^j)
+P(\zeta_c^k)- P(\zeta_c^l) = 0$. Since $\zeta_c^i \neq \zeta_c^j$
and $\zeta_c^k \neq \zeta_c^l$, we get from
$\bula \in \AAA(W)^{\times}$ that $(\zeta_c^i,\zeta_c^j) = (\zeta_c^k,\zeta_c^l)$.
But then $\om_s = 2 P(\zeta_c^i) = 2 P(\zeta_c^j)$ which implies
$\zeta_c^i = \zeta_c^j$, a contradiction. We conclude
as above that $\rho^2_{\HH'} \simeq (\rho^1_{\HH})^*$, and lemma
\ref{lemtheo} implies that $\HH(\bula)' = \HH'$, as $\HH' = \mathcal{M}$
by theorem \ref{theostruct}.
\end{proof}

Note that, if $\bula , \bumu \in \AAA(W)$ are such that, for all $c \in \SS_+/W$,
we have $\umu^c = u_c \ula^c$ for some $u_c \in \kt$, clearly $\HH(\bula) = \HH(\bumu)$.
Now denote $\alpha_c  \in \Hom(W,\kt)$ defined by $\alpha_c(s) = \zeta_c$
for $s \in c \subset \SS$, and $\alpha_c(s) = 1$ for $s \in \SS \setminus c$.
Any tuple $\underline{n} = (n_c)_{c \in \SS/W}$ of integers with $0 \leq n_c < e_c -1$
then defines a character $\alpha_{\underline{n}} = \prod_{c \in \SS/W} \alpha_c^{n_c}$
and we can define $\Phi_{\underline{n}} \in \Aut(\kk W)$ by $\Phi_{\underline{n}}(g) = 
\alpha_{\underline{n}}(g) g$ for $g \in W$. For any $\rho \in \Irr(W)$
we have $\rho \circ \Phi_{\underline{n}} \simeq \alpha_{\underline{n}} \otimes \rho$ and,
for any $\bula \in \AAA(W)$ we have $\Phi_{\underline{n}}(\HH(\bula)) = \HH(\bumu)$ with
$\mu^c_i = \zeta_c^{i n_c} \la^c_i$ for each $c,i$. The hyperplane arrangements $\mathcal{L}_i$
are clearly invariant under these operations.

\section{Applications to Zariski closures}

We recall that $W < \GL(V)$ is a finite group generated
by a set $\RR$ of (pseudo-)reflections that defines an hyperplane
complement $\mathcal{A} = \{ \Ker(s-1)\ | \ s \in \RR \}
= \{ \Ker(s-1) \ | \ s \in \SS\}$. The (generalized) pure braid
group and  braid group associated to them are $P = \pi_1(X)$
and $B = \pi_1(X/W)$, where $X$ is the hyperplane complement
$V \setminus \bigcup \mathcal{A}$ and a base point in $X$
(hence in $X/W$) is chosen once and for all.

We refer to \cite{BMR} to the basic properties of these groups,
only updating some terminology and recalling a few basic facts.

The composition of paths $(\alpha,\beta) \mapsto \alpha \beta$
is given by following first $\beta$ and then $\alpha$. With
this convention, the Galois covering $X \to X/W$
defines a natural morphism $B \onto W$ with kernel $P$.
We call the generators-of-the-monodromy in $B$ which were
associated to elements of $\RR$ in \cite{BMR} \emph{braided reflections}.
Recall from \cite{BMR} that they generate $B$.

An important structure associated to the hyperplane complement
$X$ is its holonomy Lie algebra $\mathcal{T}$ (see e.g. \cite{KOHNOKOSZUL}).
It is generated by one element $t_H$ for each hyperplane $H \in \mathcal{A}$,
and there is a natural action of $W$ by automorphisms of $\mathcal{T}$,
given by $w.t_H = t_{w(H)}$. To each $H \in \mathcal{A}$ is
associated a (well-defined) logarithmic 1-form $\om_H = \dd \alpha_H/\alpha_H$,
for an arbitrary 1-form $\alpha_H \in V^*$ with kernel $H$.

To any representation $\varphi : \mathcal{T} \to \gl_N(\C)$ is
associated a (family of) integrable 1-form(s)
$$
\om = h \sum_{H \in \mathcal{A}} \varphi(t_H) \om_H
$$
over $X$ with values in $\gl_N(\C)$, depending on some complex
parameter $h$. By monodromy this family of 1-forms defines a
family of representations
of $P = \pi_1(X)$,
that can be considered as a representation $P \to \GL_N(A)$
with $A = \C[[h]] \subset K = \C((h))$.

If $\C^N = V_{\rho}$ for some linear representation $\rho$
of $W$ such that $\varphi : \mathcal{T} \to \gl(V_{\rho})$
is equivariant w.r.t. the natural actions of $W$, then the
representation of $P$ naturally extends to a representation
$B \to \GL(V_{\rho} \otimes A)$. A change of basepoint
gives rise to a representation which is isomorphic to
the former one.

In \cite{BMR} is introduced a representation
of $\mathcal{T}$, depending on a collection $\tau_{c,j}$ of
complex numbers for $c \in \mathcal{A}/W= \SS/W$,
where $\SS$ here denotes the set of distinguished
reflections, and $0 \leq j \leq e_c-1$. For $H \in c$
and $s_H \in \SS$ with $\Ker(s-1) = H$, it is given by
$$
\varphi(t_H) = \sum_{j=0}^{j=e_c-1} \tau_{H,j} \frac{1}{e_c}
\sum_{k=0}^{k = e_c-1} \zeta_c^{-jk} \rho(s_H^k) = 
\sum_{k=0}^{e_c-1} \frac{1}{e_c}  \left( \sum_{j=0}^{j=e_c-1} (\zeta_c^{-k})^j \tau_{H,j} \right) \rho(s_H)^k
$$
with $\zeta_c = \det(s_H) = \exp(2 \ii \pi/e_c)$. The representation
of $B$ associated to that satisfies
$$
\prod_{j=0}^{e_c - 1} (\sigma - q_{H,j} \zeta_c^j ) = 0
$$
for any braided reflection associated to $s_H$ and $q_{H,j} = \exp(-h \tau_{H,j}/e_c)$,
and deforms $\rho$ into a representation of the Hecke algebra
of $W$, as defined in \cite{BMR}, with these parameters.

We note that the specialization of interest in the Brou\'e-Malle-Michel
`Spetses' program (see \cite{BMM}) is for $q_{H,0} = q$ and $q_{H,j} = 1$ for $j \neq 0$.
This specialization corresponds to the choice of parameters
$\tau_{H,j} = 0$ for $j \neq 0$.

We let $R : P \to \GL(V_{\rho} \otimes K)$ denote the
representation of $P$ associated to $\rho$. We refer to \cite{KRAMCRG}
for a proof of the next proposition.


\begin{prop} \label{propzarchev} For any representation $\rho$ of $W$, the Lie algebra of
the Zariski closure of $R(P)$ contains $\varphi(\mathcal{T}) \otimes K$.
If $\varphi(\mathcal{T})$ preserves some bilinear form over
$V_{\rho}$, then $R(P)$ preserves the induced bilinear form
over $V_{\rho} \otimes K$.
\end{prop}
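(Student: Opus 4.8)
I would establish the two assertions separately. For the containment of Lie algebras I work over $K=\C((h))$ and use crucially that $h$ is a formal, hence transcendental, parameter. The connection $\om=h\sum_H\varphi(t_H)\om_H$ is regular singular with residue $h\varphi(t_H)$ along each hyperplane $H$, and this residue is non-resonant: its eigenvalues are the $h\tau_{H,j}$, whose pairwise differences lie in $hK$ and can thus never equal a nonzero integer. Hence the local monodromy $R(\gamma_H)$ around a meridian of $H$ is semisimple and conjugate in $\GL(V_{\rho}\otimes K)$ to $\exp(2\ii\pi h\varphi(t_H))$, with conjugating matrix $C_H\equiv\Id$ modulo $h$ (the connection being trivial at $h=0$); in particular its eigenvalues are the $\exp(2\ii\pi h\tau_{H,j})$.

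Let $G$ be the Zariski closure of $R(P)$ and $\mathfrak{g}$ its Lie algebra. As $R(\gamma_H)$ is semisimple, the closure of the cyclic group it generates is diagonalisable, with Lie algebra cut out by the multiplicative relations among its eigenvalues. Here formality of $h$ is decisive: a relation $\prod_j\exp(2\ii\pi h\tau_{H,j})^{m_j}=1$ forces $h\sum_j m_j\tau_{H,j}\in\Z$, hence $\sum_j m_j\tau_{H,j}=0$, so these multiplicative relations coincide with the additive relations among the $\tau_{H,j}$. Consequently $\Ad(C_H)\varphi(t_H)$, which visibly satisfies them, lies in $\mathfrak{g}$; since $C_H\equiv\Id$ modulo $h$, this element is $\varphi(t_H)$ plus a term in $h\,\gl(V_{\rho}\otimes A)$, so $\varphi(t_H)$ occurs as a leading coefficient of an element of $\mathfrak{g}$.

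To upgrade this to the honest containment $\varphi(\mathcal{T})\otimes K\subset\mathfrak{g}$ I would exploit that $\om$ is homogeneous of degree one in $h$: the rescaling $h\mapsto\lambda h$, $\lambda\in\C^{\times}$, is an automorphism $\s_\lambda$ of $K$ sending the monodromy of $\om$ to that of $\lambda\om$, and I expect it to preserve $G$ and hence $\mathfrak{g}$. A $\GM$-invariant $K$-subspace of $\gl(V_{\rho}\otimes K)$ is graded, i.e. of the form $\mathfrak{g}_0\otimes_{\C}K$ with $\mathfrak{g}_0=\mathfrak{g}\cap\gl(V_{\rho})$, so its space of leading coefficients is exactly $\mathfrak{g}_0\subset\mathfrak{g}$. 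Combined with the previous paragraph this places every $\varphi(t_H)$ in the Lie subalgebra $\mathfrak{g}_0$; as the $\varphi(t_H)$ generate $\varphi(\mathcal{T})$, we obtain $\varphi(\mathcal{T})\subset\mathfrak{g}_0$ and therefore $\varphi(\mathcal{T})\otimes K\subset\mathfrak{g}$. The main obstacle is exactly this descent step---proving that the Zariski closure is genuinely $\s_\lambda$-invariant, equivalently that the scaled connections $\om$ and $\lambda\om$ yield closures with the same Lie algebra---for which the homogeneity of the connection and the absence of resonances must be used with care.

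For the second assertion, assume $\varphi(\mathcal{T})$ preserves a bilinear form $b$ on $V_{\rho}$, so that each $\varphi(t_H)$ is an infinitesimal isometry of $b$. Then $\om$ takes values in the associated orthogonal or symplectic subalgebra, and the constant section $b$ of $V_{\rho}^*\otimes V_{\rho}^*$ is horizontal for the connection induced by $\om$: indeed $\dd b=0$, and the isometry condition is precisely the vanishing of the remaining contribution $\sum_H h\,\om_H\,({}^{t}\varphi(t_H)\,b+b\,\varphi(t_H))$. A horizontal section is monodromy-invariant, so $R(\gamma)$ preserves the $K$-bilinear extension of $b$ to $V_{\rho}\otimes K$ for every $\gamma\in P$, as claimed.
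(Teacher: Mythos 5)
Your proof of the second assertion is correct and is the standard argument: horizontality of the constant section $b$ of $V_{\rho}^*\otimes V_{\rho}^*$ is exactly the condition that each $\varphi(t_H)$ be an infinitesimal isometry, and horizontal sections are monodromy-invariant. (For calibration: the paper itself gives no proof of this proposition but refers to \cite{KRAMCRG}, so the comparison below is with the argument used there.) For the first assertion, however, the gap you flag yourself is genuine and is not a technicality. What comes for free is only that the substitution $\sigma_\lambda\colon h\mapsto\lambda h$ carries $\overline{R_h(P)}$ onto $\overline{R_{\lambda h}(P)}$, because the Chen iterated-integral series of the monodromy of $\lambda\om$ is obtained from that of $\om$ by substituting $\lambda h$ for $h$. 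But $R_{\lambda h}(P)$ and $R_h(P)$ are \emph{different} subgroups of $\GL(V_{\rho}\otimes K)$, and identifying their Zariski closures (equivalently their Lie algebras) is essentially equivalent to the statement being proved; for instance it would follow from theorem \ref{theozarheck}, which itself rests on this proposition. So the descent step is circular as it stands. Without it, your (correct) observation that each $\varphi(t_H)$ arises as a leading coefficient of an element of $\mathfrak{g}$ only bounds $\dim_K\mathfrak{g}$ from below by $\dim_{\C}\varphi(\mathcal{T})$; it does not place $\varphi(t_H)$, let alone $\varphi(\mathcal{T})\otimes K$, inside $\mathfrak{g}$.

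The missing idea, which is how \cite{KRAMCRG} proceeds, is that the monodromy itself is constrained a priori: $R(\gamma)$ is the value of Chen's series, which is group-like, so $\log R(\gamma)$ is primitive and therefore lies in $\varphi(\mathcal{T})\otimes hA$ for \emph{every} $\gamma\in P$, not just for meridians. Your torus argument (valid because $h$ is formal: multiplicative relations among eigenvalues in $1+hA$ reduce to additive ones, since $2\ii\pi\Z$ meets $hA$ only in $0$) combined with the Jordan decomposition inside an algebraic group shows that $\log R(\gamma)\in\mathfrak{g}$ for each $\gamma\in P$. Now let $L$ be the $K$-Lie subalgebra generated by the $\log R(\gamma)$; then $L\subset\mathfrak{g}\cap\left(\varphi(\mathcal{T})\otimes K\right)$. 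The space of leading coefficients of $L$ is a $\C$-Lie subalgebra of $\varphi(\mathcal{T})$ of dimension $\dim_K L$, and it contains every $\varphi(t_H)$ because $\log R(\gamma_H)=2\ii\pi h\varphi(t_H)+O(h^2)$; since the $t_H$ generate $\mathcal{T}$, this space is all of $\varphi(\mathcal{T})$, whence $\dim_K L=\dim_{\C}\varphi(\mathcal{T})$ and therefore $L=\varphi(\mathcal{T})\otimes K\subset\mathfrak{g}$. This closes the argument using exactly the leading-term machinery you set up, and no $\GM$-equivariance of the Zariski closure is needed.
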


The group $P$ is generated by the $\sigma^{e_c}$ for $\sigma$
running among the braided reflections, with $c$  denoting
the class of the hyperplane attached to $\sigma$.
We have
$$\det R(\sigma^{e_c}) = \exp(h e_c \mathrm{tr} \varphi(t_H)) =
\exp \left( h  \sum_{k=0}^{e_c-1}   \left( \sum_{j=0}^{j=e_c-1} (\zeta_c^{-k})^j \tau_{H,j} \right) \mathrm{tr} \rho(s_H)^k \right)
$$
We let $\la^c_k =  \sum_{j=0}^{j=e_c-1} (\zeta_c^{-k})^j \tau_{c,j}$.
Note that this defines a bijection between the $(\la^c_k)$ and
the $(\tau_{c,k})$ by invertibility of the Vandermonde determinant.
With this notation, $\varphi(t_H) = (1/e_c) \sum_{k=0}^{e_c-1} \la^c_k
\rho(s_H)^k$.

Also note that $\la^c_0 = \sum_{j=0}^{e_c-1} \tau_{c,j}$. Up to
tensoring $R$ by the 1-dimensional representation that
maps a braided reflexion around $H \in \mathcal{A}$ of class $c$
to $\exp \left( h (\sum_{j=0}^{e_c-1} \tau_{c,j})/e_c\right)$, we can
assume $\la^c_0 = 0$, which we do from now on.

With this convention, we have $\varphi(\mathcal{T}) = \rho(\HH(\bula))$,
hence $\varphi(\mathcal{T})$ is a reductive Lie algebra. From
proposition \ref{propzarchev} and theorem \ref{theostruct} one
readily gets the following.

\begin{theo}\label{theozarheck}  
If $\bula \in
\AAA(W)^{\times}$ then $\varphi(\mathcal{T})$ is reductive,
$\varphi(\mathcal{T})' = \rho(\HH')$ and $R(P)$ has connected Zariski
closure, with Lie algebra $\rho(\HH(\bula))$.
\end{theo}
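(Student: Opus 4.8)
The plan is to assemble the theorem from three ingredients already in place: the identity $\varphi(\mathcal{T}) = \rho(\HH(\bula))$ recorded just above the statement, the reductivity and derived-subalgebra results for $\HH(\bula)$ proved in this section, and Proposition \ref{propzarchev}. First I would dispose of reductivity. Since $\HH(\bula)$ is reductive for every value of $\bula$ (by the reductivity proposition above), and since the image of a reductive Lie algebra under a representation is again reductive, $\varphi(\mathcal{T}) = \rho(\HH(\bula))$ is reductive. This yields the first assertion with no further work.

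Next I would identify the derived subalgebra. Because $\rho$ is a morphism of Lie algebras one has $\rho(\HH(\bula))' = \rho(\HH(\bula)')$. For $\bula \in \AAA(W)^{\times}$, part (4) of the structure proposition of this section gives $\HH(\bula)' = \HH'$, whence $\varphi(\mathcal{T})' = \rho(\HH(\bula)') = \rho(\HH')$. This is the only point where the hypothesis $\bula \in \AAA(W)^{\times}$, and through it theorem \ref{theostruct}, actually enters.

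It remains to treat the Zariski closure $G = \overline{R(P)}$. Proposition \ref{propzarchev} already furnishes the inclusion $\Lie(G) \supseteq \varphi(\mathcal{T}) \ot K$. For the reverse inclusion I would use that $\varphi(\mathcal{T})$, being reductive, is the Lie algebra of a connected algebraic subgroup $\mathbf{G} \subset \GL(V_{\rho})$; since the residues $\varphi(t_H)$ of the defining connection all lie in $\varphi(\mathcal{T}) = \Lie(\mathbf{G})$, the monodromy $R$ takes its values in $\mathbf{G}(K)$, so $\Lie(G) \subseteq \varphi(\mathcal{T}) \ot K$ and the two coincide. Connectedness I would extract from the deformation picture: the connection $\om = h \sum_H \varphi(t_H) \om_H$ vanishes at $h = 0$, so $R(P)$ lies in the congruence subgroup of elements of $\GL(V_{\rho} \ot \C[[h]])$ that reduce to the identity modulo $h$, and the Zariski closure over $K$ of such a subgroup is connected. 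Combining, $G$ is connected with Lie algebra $\varphi(\mathcal{T}) \ot K = \rho(\HH(\bula)) \ot K$.

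The first two paragraphs are routine; the genuine content is the third, and within it the delicate step is the upper bound on $\Lie(G)$ together with connectedness, i.e.\ confining the monodromy to the connected algebraic group integrating $\varphi(\mathcal{T})$. This is precisely where reductivity of $\varphi(\mathcal{T})$, guaranteeing that it is an algebraic Lie algebra, and the fact that $R$ deforms the trivial representation are both indispensable, and where the monodromy techniques underlying Proposition \ref{propzarchev} (following \cite{KRAMCRG}) must be invoked.
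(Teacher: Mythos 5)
Your first two paragraphs are correct and coincide with what the paper itself does: after the normalization $\la^c_0=0$ one has $\varphi(\mathcal{T})=\rho(\HH(\bula))$, reductivity comes from the proposition asserting that $\HH(\bula)$ is reductive for every $\bula$, and the identification $\varphi(\mathcal{T})'=\rho(\HH(\bula)')=\rho(\HH')$ comes from part (4) of the proposition preceding the theorem (which is where theorem \ref{theostruct} enters). Be aware, though, that the paper never attempts your third paragraph: proposition \ref{propzarchev} is explicitly quoted from \cite{KRAMCRG} (``we refer to \cite{KRAMCRG} for a proof''), and the connectedness and exact determination of the Lie algebra of the closure are likewise imported from that reference -- the paper's entire proof is the sentence ``from proposition \ref{propzarchev} and theorem \ref{theostruct} one readily gets the following''. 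You rightly noticed that the containment stated in proposition \ref{propzarchev} is not by itself enough, and tried to supply the upper bound and connectedness yourself; that is where the problem lies.

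The genuine gap is your claim that reductivity of $\varphi(\mathcal{T})$ guarantees it is an algebraic Lie algebra, i.e.\ the Lie algebra of a connected algebraic subgroup $\mathbf{G}\subset\GL(V_{\rho})$. This is false in general: a reductive, even abelian, subalgebra of $\gl_N(\C)$ need not be algebraic -- for instance $\C\,\diag(1,\sqrt{2})\subset\gl_2(\C)$ is abelian and acts semisimply, yet the smallest algebraic subalgebra containing it is the full diagonal Cartan. The obstruction sits exactly in the center: the semisimple part $\varphi(\mathcal{T})'$ is automatically algebraic, but the center need not be, and if it is not, then your group $\mathbf{G}$ does not exist and the claimed equality would in fact fail, because $\mathrm{Lie}$ of the Zariski closure is an algebraic Lie algebra containing $\varphi(\mathcal{T})\ot K$ and hence contains its strictly larger algebraic hull. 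The repair in the present situation is to invoke irreducibility: since $\AAA(W)^{\times}$ avoids $\bigcup\mathcal{L}_1$, part (1) of the same proposition gives that $\rho_{\HH(\bula)}$ is irreducible for $\rho\in\Irr(W)$, so by Schur's lemma the center of $\varphi(\mathcal{T})=\rho(\HH(\bula))$ is contained in the scalars $\C\,\Id$, and a subalgebra of the form (scalars) $\oplus$ (semisimple) \emph{is} algebraic. Your argument never uses irreducibility of $\rho_{\HH(\bula)}$, and this is precisely the missing ingredient. Two further steps are asserted rather than proved, although both are true and are part of what \cite{KRAMCRG} establishes: that a connection with residues in $\mathrm{Lie}(\mathbf{G})$ has monodromy landing in $\mathbf{G}(K)$ (this requires the formal Chen--Magnus expansion expressing the logarithm of the holonomy as a Lie series in the residues), and that a subgroup of $\Id+h\,\gl_N(\C[[h]])$ has connected Zariski closure over $K$ (this requires observing that a root of unity congruent to $1$ modulo $h$ equals $1$, so the cyclic subgroups generated by the images of the $\sigma^{e_c}$ have connected closures).
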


\section{Unitarizability questions}

The determination of the Zariski closure is specially useful
when the representations involved are unitarizable. The monodromy
construction as described above provides a morphism $B \to W \ltimes \exp
\widehat{\mathcal{T}}$, where $\mathcal{T}$ is completed with respect to
the graduation $\deg t_H = 1$.

Recall from \cite{KOHNOKOSZUL} that $\mathcal{T}$ can be defined over an arbitrary field $\kk$
of characteristic 0. We state the following conjecture :

\begin{conj} \label{conj1} For an arbitrary complex (pseudo-)reflection group $W$
and field $\kk$ of characteristic 0,
there exists morphisms $\Phi : B \to W \ltimes \exp \mathcal{T}$,
with $\mathcal{T}$ defined over $\kk$, such that
$\Phi(\sigma)$ is conjugated to $s_H \exp t_H$ by some element
in $\exp \widehat{\mathcal{T}}$ for every braided reflection $\sigma$
associated to $s_H$ and $H \in \mathcal{A}$.
\end{conj}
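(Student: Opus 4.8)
The plan is to build $\Phi$ from the universal monodromy of the holonomy connection and then to descend the resulting transcendental data to the field $\kk$ by an associator-type argument. First I would consider the universal flat $1$-form $\om_{\mathrm{univ}} = \sum_{H \in \mathcal{A}} t_H\, \om_H$ with values in the completed holonomy Lie algebra $\widehat{\mathcal{T}}$. By construction the defining relations of $\mathcal{T}$ are precisely the integrability (flatness) conditions for $\om_{\mathrm{univ}}$, so parallel transport along paths in $X$ is well defined and yields a monodromy morphism $P \to \exp \widehat{\mathcal{T}}$. Because the $W$-action $w.t_H = t_{w(H)}$ is compatible with the covering $X \to X/W$, this morphism is $W$-equivariant and extends to $B \to W \ltimes \exp \widehat{\mathcal{T}}$. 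A local analysis near a hyperplane $H$, where $\om_{\mathrm{univ}}$ is asymptotic to $t_H\, \dd\log\alpha_H$, shows that the image of a braided reflection $\sigma$ around $H$ is conjugate in $\exp\widehat{\mathcal{T}}$ to $s_H \exp(\lambda t_H)$ for a universal constant $\lambda$; rescaling the grading (equivalently the parameter $h$ of proposition \ref{propzarchev}) normalizes $\lambda$ to $1$, giving the required conjugacy $\Phi(\sigma) \sim s_H \exp t_H$.

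The second and harder step is to descend this \emph{a priori} complex-analytic morphism to an arbitrary field $\kk$ of characteristic $0$. The idea is that the whole construction is governed by an \emph{associator}: once the local behaviour $s_H \exp t_H$ is fixed at each hyperplane, the ambiguity in assembling these local pieces into a global morphism compatible with the Brou\'e--Malle--Rouquier presentation of $B$ (see \cite{BMR}) is measured by a group-like element of $\exp \widehat{\mathcal{T}}$ subject to coherence equations analogous to Drinfeld's pentagon and hexagon. The complex monodromy of the KZ-type connection of proposition \ref{propzarchev} produces one such transcendental solution. As in Drinfeld's theory, the solutions should form a torsor under a pro-unipotent group scheme over $\Q$; exhibiting a $\Q$-rational point and base-changing to $\kk$ would then yield a morphism $\Phi$ all of whose structure constants lie in $\kk$. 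Since $\mathcal{T}$ is defined over $\kk$ by \cite{KOHNOKOSZUL}, such a $\Phi$ lands in $W \ltimes \exp \mathcal{T}$, while the conjugacies $\Phi(\sigma) \sim s_H \exp t_H$ take place in the completion $\exp \widehat{\mathcal{T}}$, exactly as the statement requires.

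The crux of the difficulty is the existence of such a $\Q$-rational associator for each type. In the Coxeter (i.e. $2$-reflection) case the braided reflections obey ordinary Artin relations and Drinfeld's rational associator applies directly, so the conjecture should be accessible there; the infinite series $G(de,e,r)$ can presumably be treated inductively through the fibrations of their hyperplane complements, reducing to the Coxeter subgroup $W_0 = G(de,de,r)$ as in the proof of the structure theorem. For the exceptional complex types, however, the Brou\'e--Malle--Rouquier presentation involves higher braid relations for which no complete associator formalism is yet available, and it is precisely the passage from the transcendental KZ solution to a $\Q$-rational one---rather than the mere existence of a morphism over $\C$---that I expect to be the main obstacle. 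A realistic intermediate target is therefore the unconditional statement over $\kk = \C$, where the KZ monodromy already furnishes $\Phi$ directly, followed by a type-by-type reduction of the rationality claim to the existence of a single rational associator per Shephard--Todd type.
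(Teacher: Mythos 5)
The statement you were asked to prove is Conjecture \ref{conj1} of the paper: it is stated as an \emph{open conjecture}, and the paper offers no proof of it, only evidence --- it holds for $W = \mathfrak{S}_n$ via Drinfeld's rational associator, for $G(d,1,n)$ via Enriquez's cyclotomic analogues, for $G(e,e,2)$ by the author's earlier work on dihedral Artin groups, together with the fact that all the varieties involved are defined over $\Q$. Your proposal is therefore not, and could not be, a complete proof; to your credit, you recognize this explicitly in your final paragraph rather than papering over the missing step.

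What you wrote is a faithful reconstruction of the strategy behind the conjecture and of the known partial results: the holonomy/KZ monodromy furnishes $\Phi$ over $\C$ with the required local behaviour $\Phi(\sigma) \sim s_H \exp t_H$; descending to an arbitrary field $\kk$ of characteristic $0$ is an associator-type rationality problem (a torsor under a pro-unipotent group scheme, in Drinfeld's spirit); and the genuine obstruction is the absence of an associator formalism adapted to the presentations of the exceptional complex reflection groups. This coincides with the paper's own assessment of why the statement remains conjectural. Two corrections of detail. First, Coxeter groups are $2$-reflection groups but not conversely, so your parenthetical equating the two classes is inaccurate; Drinfeld's rational associator handles the Coxeter case, not every $2$-reflection group. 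Second, the known treatment of $G(de,e,r)$ goes through $G(de,1,r)$ (restriction along the inclusion of Hecke algebras, as in the paper's corollary to Theorem \ref{theounit}), not through an induction on the $2$-reflection subgroup $W_0 = G(de,de,r)$; the latter reduction is used in the paper only for the structure theorem of $\HH'$, which is a purely algebraic statement, and there is no known mechanism for propagating a morphism $\Phi$ from $W_0$ up to $W$. The bottom line: the gap in your argument --- existence of a $\Q$-rational (or $\kk$-rational) associator for each Shephard--Todd type --- is precisely the gap that makes the statement a conjecture, and your localization of the difficulty is correct.
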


Evidences for this conjecture include the fact that it holds
for $W = \mathfrak{S}_n$ by using a rational associator as defined
by Drinfeld (see \cite{DRIN}), for $W$ of type $G(d,1,n)$ by using the analogous
gadgets by Enriquez (\cite{ENRIQUEZ}; see also the appendix of \cite{DIEDRAUX}),
and for $W$ of type $G(e,e,2)$ (see \cite{DIEDRAUX}).
Another evidence is that all the varieties involved are defined
over $\Q$, as proved in \cite{MARINMICHEL}.

Recall that it is conjectured in \cite{BMR} that the Hecke algebra
of $W$ is always a flat deformation of the group algebra of $W$.
In the sequel, we call this the BMR-conjecture. It is known to
hold for all but possibly a finite number of exceptional cases.

Notice that a necessary condition for the representation $R$ defined in
a previous section to be unitary is that the $q_{H,j} \zeta_c^j$ have modulus 1.
When $h$ is specialized to a purely imaginary number, this happens exactly
when the $\tau_{H,j}$ are real numbers, for $0 \leq j \leq e_c - 1$.
Since, for $e_c - 1 \geq k \geq 1$,
$$
\la_k^c - \overline{\la^c_{e_c - k}} = \sum_{j=1}^{e_c - 1} (\zeta_c^{-k})^j
(\tau_{c,j} - \overline{\tau_{c,j}}),
$$
this imposes that $\la^c_k$ is the complex conjugate of
$\la^c_{e_c-k}$.

\begin{theo} \label{theounit} If conjecture \ref{conj1} and the BMR-conjecture hold for $W$ when $\kk = \R$ and, for all $c \in \mathcal{C}_+$
and $0 \leq j \leq e_c-1$, $\tau_{c,j} \in \R$,
then $R$ is unitarizable for $h \in \ii \R$ small enough.
\end{theo}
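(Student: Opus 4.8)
The plan is to produce a single positive-definite Hermitian form on $V_{\rho}$ preserved by the whole image $R(B)$, once $h$ is specialized to a small purely imaginary number. Since $W$ is finite, fix a $W$-invariant positive-definite Hermitian form $\langle\cdot,\cdot\rangle$ on $V_{\rho}$, write $\mathrm{U}(V_{\rho})$ for the corresponding unitary group and $\mathfrak{u}(V_{\rho})$ for its Lie algebra of anti-Hermitian endomorphisms. Each $\rho(s_H)$ is then unitary, so $\rho(W)\subset\mathrm{U}(V_{\rho})$, and it suffices to prove that for $h\in\ii\R$ small enough one also has $R(B)\subset\mathrm{U}(V_{\rho})$.

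First I would check that every $\varphi(t_H)$ is self-adjoint for $\langle\cdot,\cdot\rangle$. Using $\rho(s_H)^*=\rho(s_H)^{-1}=\rho(s_H)^{e_c-1}$ together with $\varphi(t_H)=\frac{1}{e_c}\sum_{k=0}^{e_c-1}\la^c_k\rho(s_H)^k$, one computes
$$
\varphi(t_H)^*=\frac{1}{e_c}\sum_{k=0}^{e_c-1}\overline{\la^c_k}\,\rho(s_H)^{e_c-k}
=\frac{1}{e_c}\sum_{m=0}^{e_c-1}\overline{\la^c_{e_c-m}}\,\rho(s_H)^{m}.
$$
The hypothesis $\tau_{c,j}\in\R$ gives exactly the relation $\la^c_m=\overline{\la^c_{e_c-m}}$ recorded just before the statement, whence $\varphi(t_H)^*=\varphi(t_H)$. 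Consequently, for $h=\ii\theta$ with $\theta\in\R$, each $h\varphi(t_H)=\ii\theta\,\varphi(t_H)$ lies in $\mathfrak{u}(V_{\rho})$.

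Next I would pass from the generators $t_H$ to the whole pro-unipotent part $\exp\widehat{\mathcal{T}}$. As $\mathfrak{u}(V_{\rho})$ is a real Lie algebra — a bracket of anti-Hermitian operators is again anti-Hermitian — every iterated bracket of the $\ii\theta\,\varphi(t_H)$ lies in $\mathfrak{u}(V_{\rho})$. Because $\mathcal{T}$ is taken over $\kk=\R$, an element $\xi\in\widehat{\mathcal{T}}$ has real coordinates on the bracket monomials in the $t_H$; under the grading $\deg t_H=1$ its image is a real combination, convergent for $|\theta|$ small, of iterated brackets of the $\ii\theta\,\varphi(t_H)$, hence lies in $\mathfrak{u}(V_{\rho})$ and exponentiates into $\mathrm{U}(V_{\rho})$. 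The $W$-equivariance $\rho(w)\varphi(t_H)\rho(w)^{-1}=\varphi(t_{w(H)})$ makes $\rho$ and $\exp\circ(h\varphi)$ assemble into a group homomorphism $W\ltimes\exp\widehat{\mathcal{T}}\to\GL(V_{\rho})$, and the above shows that for $|\theta|$ small this homomorphism maps into $\mathrm{U}(V_{\rho})$.

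Finally I would identify $R$ with this homomorphism precomposed with $\Phi$. Conjecture \ref{conj1} supplies $\Phi:B\to W\ltimes\exp\widehat{\mathcal{T}}$ sending each braided reflection $\sigma$ to a conjugate of $s_H\exp t_H$; the BMR-conjecture guarantees that the Hecke algebra is a flat deformation of $\kk W$, so that the deformation of $\rho$ carried by $(\rho\otimes\varphi)\circ\Phi$ — which has the same braided-reflection eigenvalues $q_{H,j}\zeta_c^j$ as $R$ — is rigid and therefore isomorphic to $R$. Combined with the previous paragraph this yields $R(B)\subset\mathrm{U}(V_{\rho})$ for $h\in\ii\R$ small, i.e.\ $R$ is unitarizable. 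The main obstacle is exactly this last identification: one must know that the abstract monodromy morphism of Conjecture \ref{conj1}, once evaluated through $\rho\otimes\varphi$, reproduces $R$ rather than some other deformation, and this is where both conjectures are genuinely used, the Hermitian computation and the small-$h$ convergence being otherwise routine.
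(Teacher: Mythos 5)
Your Hermitian computation showing that each $\varphi(t_H)$ is self-adjoint is correct, and it is exactly how the paper's proof begins. The fatal step is the convergence claim in your second paragraph. An element $\xi\in\widehat{\mathcal{T}}$ is a formal series $\xi=\sum_{n\geq 1}\xi_n$ of graded components with no growth restriction whatsoever, so its image under $t_H\mapsto \ii\theta\,\varphi(t_H)$ is the formal series $\sum_n(\ii\theta)^n\varphi_*(\xi_n)$, and nothing forces it to converge for any $\theta\neq 0$. This is not a removable technicality: the morphism $\Phi$ of conjecture \ref{conj1} is an associator-type object produced by formal and non-constructive arguments (Drinfeld, Enriquez), and its values are genuinely formal series which cannot in general be specialized at a nonzero value of $h$. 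Consequently the homomorphism $W\ltimes\exp\widehat{\mathcal{T}}\to \mathrm{U}(V_{\rho})$ that you claim to have \og for $|\theta|$ small\fg{} does not exist as stated; what your (correct) bracket computation actually gives is a representation $R'$ of $B$ with entries in $K=\C((h))$ landing in the twisted unitary group $U_N^{\bar{\eps}}(K)=\{x\in\GL_N(K)\ |\ {}^t\bar{\eps}(x)=x^{-1}\}$, and unitarity of a specialization cannot yet even be discussed.

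This is precisely the difficulty around which the paper's proof is organized. After establishing the same formal statement (in the form $A\circ\tilde{\varphi}=\bar{\eps}\circ\tilde{\varphi}$, equivalent to your parity computation on iterated brackets of self-adjoint operators), the paper invokes \cite{BMW}, proposition 3.1: the entries of the $R'(b)$ generate a finitely generated extension $L$ of $\R(h)$ inside $\C((h))$, and such a field embeds $\eps$-equivariantly into the field $\R(\{h\})$ of \emph{convergent} Laurent series. This produces a new representation $R''$ of $B$, still with values in the twisted unitary group, but now with convergent entries. Only then can one specialize: BMR-rigidity gives $R''\simeq R'\simeq R$ over $\C((h))$; since $R''$ and $R$ are both defined over $\C(\{h\})$ they are conjugate over that field, and since they coincide modulo $h$ they are conjugate over the ring of convergent power series, so their specializations at small $h$ are isomorphic over $\C$; finally the specialization of $R''$ at $h\in\ii\R$ small is unitary. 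Your last paragraph's rigidity argument (both deformations specialize to $\rho$ at $h=0$, hence are isomorphic under BMR flatness) is the right idea and matches the paper, but you apply it only at the formal level; without the intermediate convergent model $R''$ it does not descend to the specialized representations, so your conclusion $R(B)\subset\mathrm{U}(V_{\rho})$ for small $h\in\ii\R$ does not follow.
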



\begin{proof}
Up to tensoring the representation by a 1-dimensional unitary
one, we can assume that $\la^c_0 = 0$ for every $c \in \mathcal{C}^+$.
The condition then exactly means that, for all $1 \leq k \leq e_c-1$,
$\la^c_k$ is the complex conjugate of
$\la^c_{e_c-k}$.

We introduce the following automorphisms of $K = \C((h))$. Let
$\eps \in \Aut(K)$ being defined through $f(h) \mapsto f(-h)$, and
$\bar{\eps}$ be the its composed by the complex conjugation
$\sum_{r \geq r_0} a_r h^r \mapsto \sum_{r \geq r_0} \overline{a_r} (-h)^r$.
We can assume $\rho(W) \subset U_N$. We let
$U_N^{\bar{\eps}}(K) = \{ x \in \GL_N(K) \ | \ ^t\bar{\eps}(x) = x^{-1} \}$.
Since $\varphi(t_H) = (1/e_c)\sum_{k=0}^{e_c - 1} \la_k^c \rho(s_H)^k$
and since the $\rho(s_H)^k$ are unitary, our assumption implies that
$\varphi(t_H)$ is selfadjoint, hence $\exp h \varphi(t_H) \in U_N^{\bar{\eps}}(K)$.
Let $\sigma$ be a braided reflection
corresponding to $s_H$, and $\tilde{\varphi} : \widehat{\mathcal{T}} \to
\gl_N(K)$ be defined through $t_H \mapsto h \varphi(t_H)$.
Now consider the automorphism $A \in \Aut(\gl_N(K))$ given by $x \mapsto - ^t x$.
We have $A \circ \tilde{\varphi}(t_H) = - ^t h \varphi(t_H)
= - h \overline{\varphi(t_H)} = \bar{\eps} \circ \tilde{\varphi}(t_H)$.
It follows that $A \circ \tilde{\varphi} = \bar{\eps} \circ \tilde{\varphi}$
and in particular 
$ - ^t \tilde{\varphi}(\psi) =  A \circ \tilde{\varphi}(\psi) = \bar{\eps} \circ \tilde{\varphi}(\psi)
= \bar{\eps} (\tilde{\varphi}(\psi))$. Taking exponentials,
we get
$$
\left( ^t\exp(  \tilde{\varphi}(\psi)) \right)^{-1} = \exp ( - ^t \tilde{\varphi}(\psi))
= \exp ( \bar{\eps} (\tilde{\varphi}(\psi)) ) = \bar{\eps} 
\left( \exp((\tilde{\varphi}(\psi)) \right)
$$
whence $\exp \tilde{\varphi}(\psi) \in U_N^{\bar{\eps}}(K)$.
Finally note that $\rho(s_H) \in U_N \subset U_N^{\bar{\eps}}(K)$.
Now $(\rho, \tilde{\varphi})$ provide a morphism $W \ltimes \exp
\widehat{\mathcal{T}} \to \GL_N(K)$. Combining it with
$\Phi$ we thus get a representation $R' : B \to U_N^{\bar{\eps}}(K)$,
that factorizes through the Hecke algebra. Under the BMR-conjecture,
we have $R \simeq R'$, since both $R$ and $R'$ specialize to
$\rho$ through $h = 0$.

Let $L$ be a subfield of $K$ containing $\R$, the entries of the $R'(b)$
for $b \in B$ as well as $\mathrm{i} \in \C$ and $h$. Since
$B$ is finitely generated we can take for $L$ a finitely generated
extension of $\R(h)$. Up to considering $L + \eps(L)$
we can assume $\eps(L) = L$. Since $\ii \in L$ we have
$L = L_0 \oplus \ii L_0$ with $L_0 \in \R((h))$ and $\eps(L_0) = L_0$.
By \cite{BMW} proposition 3.1 we know that $L_0$ is isomorphic to some finitely generated
extension $L_0^*$ of $\R(h)$ inside the field of convergent Laurent
series $\R(\{ h \})$, through a $\eps$-equivariant isomorphism.
It extends to $\bar{\eps}$-equivariant isomorphism
between $L = L_0 + \ii L_0$ and $L^* = L_0^* + \ii L_0^*$. 
Since $R'(B) \subset U_N^{\bar{\eps}}(L) \simeq U_N^{\bar{\eps}}(L^*)$
we get a new representation $R'' : B \to U_N^{\bar{\eps}}(\C(\{ h \}))$.
The isomorphism $L_0 \to L_0^*$ can be chosen such that the entries
of the $R(b)$, for $b$ running inside a finite set of generators for $B$,
are unchanged modulo $h$. By the same argument as above, under the
BMR-conjecture, $R''$ is isomorphic to $R'$ hence to $R$ over $\C(( h ))$.
Since $R''$ and $R$ are both defined over $\C(\{ h \})$, they
are conjugated over this field. Since they coincide modulo
$h$ they are both defined over the ring of convergent power series,
so their specialisations to small $h$ are isomorphic over $\C$.
Now, for $h \in \ii \R$ small enough, the specialization of $R''$ is unitary,
and this concludes the proof.
\end{proof}

\begin{cor} If $W$ is a complex reflection group of type $G(de,e,r)$,
then $R$ is unitarizable under the above conditions on the parameters.
\end{cor}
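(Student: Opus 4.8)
The plan is to obtain the corollary as a direct application of Theorem \ref{theounit}: it suffices to check that its two standing hypotheses hold for $W=G(de,e,r)$, namely the BMR-conjecture and Conjecture \ref{conj1} over $\kk=\R$, the reality of the parameters $\tau_{c,j}$ being exactly the hypothesis we are granted. The BMR-conjecture is known for every member of the infinite series $G(de,e,r)$, so that hypothesis is automatic, and the whole problem reduces to producing over $\R$ the morphism required by Conjecture \ref{conj1} --- or, as it will turn out, enough of one to drive the proof of Theorem \ref{theounit}.

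To do so I would reduce to $G(de,1,r)$, a member of the series $G(d,1,n)$ for which Conjecture \ref{conj1} is known by Enriquez's construction (\cite{ENRIQUEZ}, and the appendix of \cite{DIEDRAUX}). The starting point is the classical fact that, when $d>1$, the groups $G(de,e,r)$ and $G(de,1,r)$ have the same reflecting hyperplanes: the braid-type hyperplanes $x_i=\zeta x_j$ belong to both, and the coordinate hyperplane $x_i=0$ is reflecting for $G(de,e,r)$ precisely because $\mu_d\setminus\{1\}$ is non-empty. Hence the hyperplane complement $X$, the holonomy Lie algebra $\mathcal{T}$ and the pure braid group $P=\pi_1(X)$ coincide for the two groups, while the index-$e$ inclusion $G(de,e,r)\hookrightarrow G(de,1,r)$ realises $B=\pi_1(X/G(de,e,r))$ as an index-$e$ subgroup of $B(de,1,r)=\pi_1(X/G(de,1,r))$ (see \cite{BMR}). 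I would then restrict the morphism $\Phi_1:B(de,1,r)\to G(de,1,r)\ltimes\exp\mathcal{T}$ to $B$; since $\Phi_1$ covers the projection $B(de,1,r)\onto G(de,1,r)$ (the $W$-part of $s_H\exp t_H$ is $s_H$, and conjugation by $\exp\widehat{\mathcal{T}}$ leaves it unchanged) and $B$ maps into $G(de,e,r)$, the restriction lands in $G(de,e,r)\ltimes\exp\mathcal{T}$.

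On the braid-type braided reflections, which lie in both braid groups and are attached to the same order-$2$ reflections, the restriction already yields elements conjugate to $s_H\exp t_H$. The hard part will be the coordinate hyperplane: the braided reflection $\tau_0$ of $B$ around $x_i=0$ is the $e$-th power $\sigma_0^e$ of the corresponding braided reflection of $B(de,1,r)$, and $\Phi_1(\sigma_0)$ is conjugate to $s\exp t_H$ with $s$ of order $de$. As $s$ fixes $H$ it commutes with $\exp t_H$, so $\Phi_1(\tau_0)$ is conjugate to $s^e\exp(e\,t_H)$, where $s^e$ is the distinguished reflection of order $e_c=d$ of $G(de,e,r)$. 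The spurious factor $e$ in the exponent is the main obstacle: it prevents the restriction from satisfying Conjecture \ref{conj1} verbatim, and it cannot be removed by an automorphism of $\mathcal{T}$, since rescaling the coordinate generators alone does not respect the holonomy relations (the flat $\{x_i=x_j=0\}$ lies on hyperplanes of both classes).

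I expect this factor to be harmless for the unitarity statement, and I would dispose of it by re-running the proof of Theorem \ref{theounit} with the restricted morphism $\Phi_e$ in place of $\Phi$; that proof uses only that $\Phi$ is a homomorphism covering $B\onto W$ together with the self-adjointness of the residues. Writing the connection residue for $G(de,e,r)$ as $\varphi(t_H)=(1/e_c)\sum_k\la^c_k\rho(s_H^k)$ with $s_H=s^e$ of order $e_c=d$, I would feed $\Phi_e$ with the rescaled residue $\tilde{\varphi}(t_H)=(h/e)\varphi(t_H)$ on the coordinate classes, so that $\Phi_e(\tau_0)$ is sent to $\rho(s^e)\exp(h\varphi(t_H))$ and matches the monodromy $R$. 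Reality of the $\tau_{c,j}$ still makes $\varphi(t_H)$, hence $(1/e)\varphi(t_H)$, self-adjoint, so the resulting $R'$ lands in $U_N^{\bar{\eps}}(K)$; the BMR-conjecture for $G(de,e,r)$ then gives $R'\simeq R$, and the specialisation argument of Theorem \ref{theounit} shows $R$ is unitarizable for $h\in\ii\R$ small enough. Finally, the excluded values $d\in\{1,2\}$, for which $G(de,e,r)$ is a $2$-reflection group, fall outside this scheme but are already covered by \cite{IH2}, so the corollary holds for the entire series.
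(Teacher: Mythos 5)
Your route is genuinely different from the paper's, and it contains gaps. The paper never attempts to verify (any variant of) Conjecture \ref{conj1} for $G(de,e,r)$ itself: it applies Theorem \ref{theounit} only to $G(de,1,r)$, where Enriquez's result gives the conjecture, and then transports unitarity down \emph{algebraically}, quoting \cite{RR}: the Hecke algebra of $G(de,e,r)$ is a subalgebra of that of $G(de,1,r)$, with images of braided reflections mapped to images of braided reflections, and every irreducible representation of the smaller algebra occurs in the restriction of one of the larger, so the invariant hermitian form simply restricts. Your braid-group-level alternative (restrict $\Phi_1$ along $B \leq B(de,1,r)$ and re-run the proof of Theorem \ref{theounit}) can plausibly be completed when $d>1$, but as written its central step is unjustified: for $R'=(\rho,\tilde{\varphi})\circ\Phi_e$ to exist at all, the rescaled map $\tilde{\varphi}$ (sending $t_H$ to $(h/e)\varphi(t_H)$ on coordinate classes and to $h\varphi(t_H)$ on braid classes) must be a $W$-equivariant Lie algebra morphism on $\widehat{\mathcal{T}}$, i.e.\ the rescaled residues must still satisfy the holonomy relations --- and you yourself observe that per-class rescaling does \emph{not} respect those relations in $\mathcal{T}$. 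Your claim that the proof of Theorem \ref{theounit} ``uses only that $\Phi$ is a homomorphism covering $B\onto W$ together with the self-adjointness of the residues'' is therefore inaccurate, and ``I expect this factor to be harmless'' is not an argument. The gap is fillable, but only by an observation you never make: the rescaled collection is exactly the BMR residue collection $\varphi(t_H)=(1/e_c)\sum_k \la^c_k\rho(s_H^k)$ with the parameters $\la^c$ multiplied by a constant depending only on the class $c$, and the BMR construction produces a representation of $\mathcal{T}$ for \emph{every} choice of parameters (integrability of the Cherednik form holds identically in the parameters); without this, $R'$ is not even defined.

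The second gap cannot be repaired inside your scheme: the case $d=1$, i.e.\ $W=G(e,e,r)$, which the corollary also covers. There the coordinate hyperplanes are not reflecting hyperplanes of $W$, so the complement $X$, the holonomy Lie algebra $\mathcal{T}$ and the group $P$ differ from those of $G(e,1,r)$, and $B(e,e,r)$ is not an index-$e$ subgroup of $B(e,1,r)$ (it is a proper quotient of the index-$e$ subgroup $\pi_1(X_{e,1,r}/G(e,e,r))$, obtained by filling the coordinate hyperplanes back in); every identification your construction rests on fails. Your fallback --- that $d\in\{1,2\}$ gives $2$-reflection groups ``already covered by \cite{IH2}'' --- is not correct: by the paper's own account, \cite{IH2} yields unitarizability unconditionally only for \emph{Coxeter} groups, and $G(e,e,r)$ with $e,r\geq 3$ (for instance $G(3,3,3)$) is a non-Coxeter $2$-reflection group for which no analogue of Conjecture \ref{conj1} is known. (The exclusion of $d=2$ was also unnecessary: $G(2e,e,r)$ has the same arrangement as $G(2e,1,r)$, so your main scheme applies there.) This is precisely where the paper's Clifford-theoretic argument shows its strength: the Hecke-algebra inclusion and the restriction statement of \cite{RR} are uniform in $d\geq 1$, so the corollary follows for the whole series, whereas your proof, even once repaired, covers only $d\geq 2$.
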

\begin{proof} The BMR-conjecture is known to hold for
$W$ of these types. Since conjecture \ref{conj1} holds for $W$
of type $G(de,1,r)$ by \cite{ENRIQUEZ}, we get unitarity
for these groups. Now the Hecke algebra of type $G(de,e,r)$
is a subalgebra of the one of type $G(de,1,r)$, with images of braided
reflexions mapped to images of braided reflections (see \cite{RR}); since
any irreducible representation of the Hecke algebra of type $G(de,e,r)$
appears in the restriction of one of type $G(de,1,r)$ (see \cite{RR}),
this concludes the proof of the corollary.
\end{proof}

We recall from \cite{IH2} that the unitarizability is known for
Coxeter groups, and was also proved for the reflection
representation by geometric methods in \cite{CHL}.

In the unitary cases, and for transcendant values of the
parameters, the Lie algebra of the topological closure
is then given by a compact form of the Lie algebra of the
Zariski closure. Such a compact form is described
by the following generalization of \cite{IH2} prop. 2.27.

\begin{prop} Assume that, for all $c \in \mathcal{C}_+$
and $1 \leq k \leq e_c-1$, $\la^c_k$ is the complex conjugate of
$\la^c_{e_c-k}$. Then the real Lie subalgebra $\mathcal{H}_c(\bula)$
of $\C W$ generated by the $\ii s, s \in \SS_0$ and the
$\ii (\sum \la^c_k s^k)$ for $s \in c \subset \SS_+$ is a compact real
Lie subalgebra of $\mathcal{H}(\bula)$. Moreover, $\HH_c(\bula)'$ is
a compact real form of $\HH(\bula)'$.
\end{prop}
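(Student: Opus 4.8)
The plan is to realize $\HH_c(\bula)$ inside the Lie algebra of skew-Hermitian elements of $\C W$ for the natural involution. First I would equip $\C W$ with the conjugate-linear anti-automorphism $x \mapsto x^{*}$ determined by $w^{*} = w^{-1}$ on group elements; under the Wedderburn isomorphism $\C W \simeq \prod_{\rho \in \Irr(W)} \End(V_\rho)$, with each $V_\rho$ carrying a $W$-invariant Hermitian form so that $\rho(w)$ is unitary, this involution becomes the Hermitian adjoint on each factor. The key computation is that the generators of $\HH_c(\bula)$ are anti-self-adjoint: for $s \in \SS_0$ one has $s^{*} = s^{-1} = s$, while for $s \in c \subset \SS_+$ of order $e_c$, writing $x = \sum_{k=1}^{e_c-1} \la^c_k s^k$ and using $(s^k)^{*} = s^{e_c-k}$ together with the hypothesis $\overline{\la^c_{e_c-k}} = \la^c_k$ yields $x^{*} = x$. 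Hence $(\ii s)^{*} = -\ii s$ and $(\ii x)^{*} = -\ii x$.

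Next I would note that $\mathfrak{u} = \{ y \in \C W \mid y^{*} = -y\}$ is a real Lie subalgebra of $\C W$ (closed under the bracket, since $[y,z]^{*} = -[y,z]$ whenever $y,z$ are anti-self-adjoint) which, under the Wedderburn decomposition, equals $\bigoplus_\rho \mathfrak{u}(V_\rho)$, the Lie algebra of the compact group $\prod_\rho \mathrm{U}(V_\rho)$. As $\HH_c(\bula)$ is generated as a real Lie algebra by anti-self-adjoint elements, it lies in $\mathfrak{u}$; and since any real subalgebra of a compact Lie algebra is compact (an ad-invariant positive-definite form on $\mathfrak{u}$ restricts to one on the subalgebra), $\HH_c(\bula)$ is compact. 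It is contained in $\HH(\bula)$ because the latter is a complex subalgebra containing $s$ and $x$, hence $\ii s$ and $\ii x$. This settles the first assertion.

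For the real-form statement I would verify that $\HH(\bula) = \HH_c(\bula) \oplus \ii\,\HH_c(\bula)$. The sum $\HH_c(\bula) + \ii\,\HH_c(\bula)$ is a complex subalgebra (stable under multiplication by $\ii$ and closed under the bracket) that contains the generators $s,\ii s,x,\ii x$ of $\HH(\bula)$, so it equals $\HH(\bula)$; the sum is direct because $\HH_c(\bula)$ consists of anti-self-adjoint elements and $\ii\,\HH_c(\bula)$ of self-adjoint ones, which meet only in $0$. Thus $\HH_c(\bula)$ is a compact real form of $\HH(\bula)$. Finally, since complexification commutes with passing to the derived subalgebra, $\HH_c(\bula)' \otimes_\R \C = (\HH_c(\bula)\otimes_\R \C)' = \HH(\bula)'$; and $\HH_c(\bula)'$, being the semisimple part of the compact reductive algebra $\HH_c(\bula)$, is itself compact. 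Hence $\HH_c(\bula)'$ is a compact real form of $\HH(\bula)'$, using that $\HH(\bula)$ is reductive as proved above.

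The routine points are the Lie-algebra closure checks and the Wedderburn identification; the one computation carrying the real content is the self-adjointness of $x = \sum_k \la^c_k s^k$, where the palindromic conjugation hypothesis $\overline{\la^c_{e_c-k}} = \la^c_k$ is exactly what forces $x^{*} = x$. I expect the main (though modest) obstacle to be bookkeeping: ensuring the involution is transported correctly through Wedderburn so that \emph{anti-self-adjoint} genuinely means \emph{skew-Hermitian in every irreducible factor}, which is what guarantees compactness rather than mere reductiveness.
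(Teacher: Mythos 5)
Your proof is correct, and it overlaps with the paper's argument in the first half while diverging in a useful way in the second. For compactness, the paper does essentially what you do, just in a different dress: it takes the positive-definite form $(w_1,w_2)=\delta_{w_1,w_2}$ on $\C W$ and views $\HH(\bula)$ inside $\End(\C W)$ via left multiplication; the adjoint with respect to that form is exactly your involution $w\mapsto w^{-1}$ extended conjugate-linearly, so the same palindromic computation shows the generators are skew-adjoint and $\HH_c(\bula)$ sits inside the compact algebra $\mathfrak{u}(\C W)$. Your Wedderburn formulation (skew-Hermitian in every irreducible factor) is an equivalent repackaging. The genuine difference is in the real-form statement. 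The paper proves the spanning identity $\HH_c(\bula)'+\ii\HH_c(\bula)'=\HH(\bula)'$ by rewriting iterated brackets of the generators, and then obtains directness of the sum by a dimension argument: a compact subalgebra of the complex semisimple Lie algebra $\HH(\bula)'$ has real dimension at most $\dim_{\C}\HH(\bula)'$, since otherwise it would exponentiate to a compact subgroup of $\exp\HH(\bula)'$ larger than a maximal compact subgroup. You avoid this group-level detour entirely: since $\HH_c(\bula)$ consists of skew-adjoint elements and $\ii\,\HH_c(\bula)$ of self-adjoint ones, the intersection is trivially zero, and you then descend to derived algebras via $(\mathfrak{g}')_{\C}=(\mathfrak{g}_{\C})'$. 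Your route is more elementary and self-contained, exploiting the involution you already set up for compactness; the paper's dimension bound is a general principle that would apply to any compact subalgebra regardless of how it is embedded, but here it is more machinery than the problem requires. Both arguments are complete; yours is arguably the cleaner of the two.
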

\begin{proof}
We define on $\C W$ a sesquilinear form by $(w_1,w_2) = \delta_{w_1,w_2}$
for $w_1,w_2 \in W$. This form is clearly positive definite. By left action we have $\mathcal{H}(\bula) \subset
\C W \subset \End(\C W)$, and the generators of $\mathcal{H}_c(\bula)$
are easily checked to satisfy, under our assumption, the
equality $x^* = -x$, where $x^*$
denotes the adjoint of $x \in \End(\C W)$ with respect to our form.
This proves that $\mathcal{H}_c(\bula)$ is a real Lie subalgebra of
the compact Lie algebra $\mathfrak{u}(\C W)$, and therefore is
a compact Lie algebra. Now $\HH_c(\bula)' + \ii \HH_c(\bula)$
is a complex Lie subalgebra of $\HH(\bula)'$, which equals
$\HH(\bula')$ because every iterated bracket $[x_1,[x_2,\dots,[x_{r-1},x_r]\dots]$ in the
defining generators of $\HH(\bula)$ can be written as
$(-\ii)^r [y_1,[y_2,\dots,[y_{r-1},y_r]\dots]$ where the $y_k = \ii x_k$ are the defining generators of $\HH_c(\bula)$.
Since $\HH_c(\bula)'$ is a compact Lie subalgebra of the semisimple
Lie algebra $\HH(\bula)'$, its real dimension is at most
$\dim_{\C} \HH(\bula)' = (\dim_{\R} \HH(\bula)')/2$
(otherwise it would define a compact subgroup of the semisimple
group $\exp \HH(\bula)'$,
of real dimension larger than its maximal compact subgroups).
It follows that $\HH_c(\bula)' \cap \ii \HH_c(\bula)' = \{ 0 \}$
hence $\HH_c(\bula)'$ is a compact real form of $\HH(\bula)'$.
\end{proof}

\section{Special situations}

\subsection{The special Hecke algebra}

Let $W$ an irreducible reflection group, and $\rho^1,\rho^2 \in \Irr(W)$.
We assume that $\rho^1_{\HH_s'} \simeq \rho^2_{\HH_s'}$.
We can assume that $\rho^2(s) = \rho^1(s) + \om_s$ for
all $s \in \SS$. Note that $\om_s \in \kk$ only depends on
the conjugacy class of $\SS$. If $\mathcal{X} = \Sp \rho^1(s)$
we have $\mathcal{X} + \om_s = \Sp \rho^2(s) \subset \mu_n$,
where $n$ is the order of $s$. By lemma \ref{lemcercle} we have $\om_s \neq 0
\Rightarrow |\Sp(\rho^2(s))| \leq 2$. Up to tensoring
$\rho^1$ by some character in $\XX(\rho^1)$, we can assume that,
for all $s \in \SS$, either $\rho^2(s) = \rho^1(s)$, or
$|\Sp(\rho^2(s))| = 2$ when $\om_s \neq 0$.
When $\Sp(\rho^2(s)) = \{ \alpha,\beta \}$
with $\alpha \neq \beta$, then $\om_s = \alpha + \beta$
and $\Sp \rho^1(s) = \{ - \alpha, - \beta \}$. This is only possible
for $n$ an even integer. Also note that, if $n = 2$,
then necessarily $\om_s = 0$. We can thus assume that $n >2$
and $n$ is even.

Since $\rho^1(s)$ is semisimple, $\rho^1(s^{-1}) = (- \alpha^{-1} \beta^{-1})
(\rho^1(s) + \om_s)$, hence there exists $\chi \in \Hom(W,\kt)$
defined by $\chi(s) = 1$ if $\om_s = 0$ and $\chi(s) = (-1/\alpha\beta)$
when $\om_s \neq 0$ and $\Sp \rho^1(s) = \{ \alpha,\beta \}$
with $\alpha \neq \beta$, such that $\rho^3 = \rho^2 \otimes \chi$ is an
irreducible representation of $W$ that satisfies
$\rho^3(s) = \rho^1(s^{-1})$ for all $s \in \SS$ with $\om_s \neq 0$
and $\rho^3(s) = \rho^1(s)$ when $\om_s = 0$.

Using the Shephard-Todd classification we can check on
the representations of the exceptional groups that these situations
do not occur for $W$ an exceptional group. The first remark
is that the orders of the pseudo-reflections for such a group
are at most 5, so the only cases to consider are when
$W$ admits a pseudo-reflection of order 4. There are four such groups,
namely $G_8,G_9,G_{10}$ and $G_{11}$. In these four cases,
there are six pseudo-reflections of order 4, and only one class of
such pseudo-reflections in $\SS$. We then restrict to the
$\rho^1 \in \Irr(W)$ such that the formulas $s \mapsto \rho^1(s^{-1})$ if
$s \in \SS$ has order 4 and $s \mapsto \rho^1(s)$ otherwise define
a representation of $W$. All such representations have dimensions 1 or 2.
Then the representation $\rho^2$ is given by
$\rho^2(s) = - (\det \rho^1(s)) \rho^1(s^{-1})$ when $s\in \SS$ has order 4
and $\rho^2(s) = \rho^1(s)$ otherwise, and it is readily checked
that $\rho^2$ has the same character as $\rho^1$ in all cases.
This proves the following.

\begin{prop} \label{propduauxexcept} Let $W$ be an exceptional irreducible
reflection group. For $\rho^1,\rho^2 \in \Irr(W)$, 
$\rho^1_{\HH_s'} \simeq \rho^2_{\HH_s'}$ if and only
if
$\rho^1_{\HH'} \simeq \rho^2_{\HH'}$,
that is $\rho^2 = \rho^1 \otimes \eta$ for some $\eta \in \XX(\rho^1)$.
\end{prop}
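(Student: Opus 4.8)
The plan is to reduce the statement to a finite, completely explicit check over the list of exceptional irreducible reflection groups, using the constraints already extracted in the preceding paragraph. The key observation is that the equivalence $\rho^1_{\HH'} \simeq \rho^2_{\HH'}$ is, by construction of the relation $\approx$ and by proposition \ref{propHHisom}(2), exactly the condition $\rho^2 \simeq \rho^1 \otimes \eta$ for some $\eta \in \XX(\rho^1)$, so the nontrivial content is showing that the a priori weaker equivalence $\rho^1_{\HH_s'} \simeq \rho^2_{\HH_s'}$ forces the same conclusion. Since $\HH_s \subset \HH$, one implication is immediate; the work lies entirely in the converse, i.e. in showing that the extra identifications possibly created by passing from $\HH'$ to $\HH_s'$ do not actually occur for exceptional $W$.

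First I would invoke the analysis of the paragraph above: starting from $\rho^2(s) = \rho^1(s) + \om_s$ with $\om_s$ depending only on the conjugacy class of $s$ in $\SS$, lemma \ref{lemcercle} gives $\om_s \neq 0 \Rightarrow |\Sp \rho^2(s)| \leq 2$, and when $\Sp \rho^2(s) = \{\alpha,\beta\}$ with $\alpha \neq \beta$ one has $\om_s = \alpha+\beta$ and $\Sp \rho^1(s) = \{-\alpha,-\beta\}$, which is possible only when the order $n$ of $s$ is even and $n > 2$. This is the decisive structural restriction: a genuinely new identification can arise only through pseudo-reflections of even order at least $4$. Since the pseudo-reflections of an exceptional group have order at most $5$ by the Shephard--Todd classification, the only relevant order is $4$, which occurs precisely in $G_8, G_9, G_{10}, G_{11}$.

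The heart of the argument is then the finite verification for these four groups. In each, there is a single class of order-$4$ pseudo-reflections in $\SS$, and I would restrict attention to those $\rho^1 \in \Irr(W)$ for which the prescriptions $s \mapsto \rho^1(s^{-1})$ on order-$4$ reflections and $s \mapsto \rho^1(s)$ elsewhere actually assemble into a representation of $W$; a character-table inspection shows these have dimension $1$ or $2$. For such $\rho^1$ one defines $\rho^2$ by $\rho^2(s) = -(\det \rho^1(s))\,\rho^1(s^{-1})$ on the order-$4$ reflections and $\rho^2(s) = \rho^1(s)$ otherwise, and checks directly that $\rho^2$ and $\rho^1$ have the same character, hence $\rho^2 \simeq \rho^1$. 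This shows the putative new identification collapses to the trivial one, so $\rho^1_{\HH_s'} \simeq \rho^2_{\HH_s'}$ already implies $\rho^1_{\HH'} \simeq \rho^2_{\HH'}$.

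The main obstacle I anticipate is not conceptual but bookkeeping: one must be sure the eigenvalue/semisimplicity constraints leave \emph{no} case for odd-order reflections (handled by the even-order necessity above and by the remark that $n=2$ forces $\om_s = 0$) and that the finite check for $G_8$ through $G_{11}$ is genuinely exhaustive over the relevant $\rho^1$. Once the reduction to order-$4$ reflections is secured, the verification is routine and amounts to comparing characters, which is why I would present it as a direct computation rather than spelling out each matrix.
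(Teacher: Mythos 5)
Your proposal is correct and follows essentially the same route as the paper: the easy direction from $\HH_s' \subset \HH'$, the reduction via lemma \ref{lemcercle} to pseudo-reflections of even order greater than $2$, the observation that the Shephard--Todd classification leaves only the order-$4$ classes in $G_8, G_9, G_{10}, G_{11}$, and the finite character-comparison check with $\rho^2(s) = -(\det \rho^1(s))\,\rho^1(s^{-1})$ on those classes. The paper's own argument is exactly this, so there is nothing to add.
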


We now consider $W = G(de,e,r)$, for which we can assume with $d > 2$.
The elements in $\SS$ of order more than 2 have order $d$
and form a single conjugacy class $c \subset \SS$.
We can thus assume $d$ even, and let $\zeta = \zeta_c$.
The elements of $c$ are then the $t_i = \mathrm{diag}(1,\dots,1,\zeta,1,\dots,1)$
for $1 \leq i \leq r$. Is is easily checked (see e.g. \cite{MARINMICHEL})
that the formulas $t_i \mapsto t_i^{-1}$ define (uniquely)
an automorphism $\mathfrak{c}$ of $W$ that fixes $\mathfrak{S}_r \subset
W$.

It follows that $\rho^2_{\HH_s'} \simeq \rho^1_{\HH_s'}$
if and only if either $\rho^2_{\HH'} \simeq \rho^1_{\HH'}$
or $\rho^1(t_1)$ has two eigenvalues $\alpha \neq \beta$ and,
up to tensoring by some character in $\XX(\rho^1)$,
$\rho^2 = \chi \otimes \rho^1 \circ \mathfrak{c}$,
where $\chi \in \Hom(W,\kt)$ is defined by $\chi(t_i) = - \alpha \beta$,
$\chi(s) = 1$ for $s \not\in c$.

\begin{prop} Let $W = G(de,e,r)$ with $d > 2$ and
$\rho^1,\rho^2 \in \Irr(W)$. If $d$ is odd or
$\Sp(\rho^1(t_1)) \neq 2$ then $\rho^1_{\HH_s'} \simeq
\rho^2_{\HH_s'}$ iff $\rho^1_{\HH'} \simeq \rho^2_{\HH'}$.
\end{prop}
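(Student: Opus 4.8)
The plan is to reduce everything to the spectral analysis carried out at the beginning of this subsection: assuming $\rho^1_{\HH_s'} \simeq \rho^2_{\HH_s'}$, one normalizes $\rho^1$ by a character of $\XX(\rho^1)$ so that for every $s \in \SS$ either $\rho^2(s) = \rho^1(s)$, or $\om_s \neq 0$ and $|\Sp \rho^2(s)| = 2$. I will show that under either hypothesis the scalar $\om_s$ vanishes for all $s$, which forces $\rho^2 \simeq \rho^1 \otimes \eta$ with $\eta \in \XX(\rho^1)$ and hence, by proposition \ref{propHHisom}, $\rho^1_{\HH'} \simeq \rho^2_{\HH'}$.

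One implication is free of hypotheses: since $\SS \subset \RR$ we have $\HH_s \subset \HH$, whence $\HH_s' = [\HH_s,\HH_s] \subset [\HH,\HH] = \HH'$, so any intertwiner realizing $\rho^1_{\HH'} \simeq \rho^2_{\HH'}$ restricts to one realizing $\rho^1_{\HH_s'} \simeq \rho^2_{\HH_s'}$. It remains to prove the converse under the stated hypotheses. Recall that $\om_s$ depends only on the class of $s$ in $\SS$, that $\om_s = 0$ whenever $s$ has order $2$, and that the only reflections of $\SS$ of order $>2$ are the $t_i$, all conjugate and of order $d$. Thus there is a single scalar that can fail to vanish, namely the common value $\om_{t_i}$.

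First I would treat the hypothesis $d$ odd. If $\om_{t_1} \neq 0$, then by the argument recalled above $\Sp \rho^1(t_1) = \{-\alpha,-\beta\}$ with $\Sp \rho^2(t_1) = \{\alpha,\beta\}$ and $\alpha \neq \beta$; as both spectra consist of eigenvalues of elements of order $d$, they lie in $\mu_d$, forcing $-1 \in \mu_d$, i.e. $d$ even — a contradiction, so $\om_{t_1} = 0$. Next I would treat the hypothesis $|\Sp \rho^1(t_1)| \neq 2$: again $\om_{t_1} \neq 0$ would give $|\Sp \rho^1(t_1)| = 2$, contradicting the hypothesis, so once more $\om_{t_1} = 0$. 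In both cases every $\om_s$ vanishes, so $\rho^2(s) = \rho^1(s)$ for all $s \in \SS$; as $\SS$ generates $W$ this yields $\rho^2 \simeq \rho^1 \otimes \eta$ with $\eta \in \XX(\rho^1)$, and proposition \ref{propHHisom} (the one-dimensional case being trivial, both sides then holding automatically) gives $\rho^1_{\HH'} \simeq \rho^2_{\HH'}$, completing the equivalence.

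I do not anticipate a genuine obstacle: the substantive work is the spectral analysis at the start of the subsection, and the proposition is precisely the observation that each of the two hypotheses forbids the exceptional pattern $\Sp \rho^2(t_1) = -\Sp \rho^1(t_1)$ that distinguishes $\HH_s'$ from $\HH'$. The only point to verify is that $|\Sp \rho^1(t_1)|$ is well defined, so that the hypothesis makes sense: it is unchanged when $t_1$ is replaced by another $t_i$ (the $t_i$ being conjugate) and when $\rho^1$ is replaced by $\rho^1 \otimes \eta$ (since $\eta(t_1)$ is a scalar), hence does not depend on the normalizations used above.
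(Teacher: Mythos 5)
Your proposal is correct and is essentially the paper's own argument: the proposition carries no separate proof there, being exactly the specialization to $G(de,e,r)$ of the spectral analysis at the start of the subsection (lemma \ref{lemcercle}, the normalization by a character in $\XX(\rho^1)$, and the observation that $\om_s \neq 0$ forces $\Sp \rho^1(s) = \{-\alpha,-\beta\}$, hence $s$ of even order $>2$ with exactly two distinct eigenvalues), which is precisely what you invoke for the single class of the $t_i$. Your explicit handling of the easy direction via $\HH_s' \subset \HH'$, of the one-dimensional case in proposition \ref{propHHisom}, and of the well-definedness of $|\Sp \rho^1(t_1)|$ only spells out what the paper leaves implicit.
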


The following is then a consequence of lemma \ref{lemtheo}.

\begin{cor} For $G(de,e,r)$ with $d$ an odd integer,
or $W$ an exceptional group,
$\HH'_s = \HH'$.
\end{cor}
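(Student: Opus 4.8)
The plan is to deduce the corollary from Lemma \ref{lemtheo}, applied with $\g = \HH_s'$ and $\mathcal{M} = \HH'$. First I would note that in both families of cases $W$ is irreducible and not a $2$-reflection group, and in particular $W \neq H_4$: for $W = G(de,e,r)$ with $d$ odd we have $d \geq 3$, so $W$ carries pseudo-reflections of order $d > 2$; and the exceptional groups meant here are the non-Coxeter ones, the $2$-reflection case having been settled in \cite{IH2}, so $H_4$ is excluded. Hence Theorem \ref{theostruct} applies and gives $\HH' = \mathcal{M}$. Since $\HH_s = \HH(\SS) \subset \HH(\RR) = \HH$ gives $\HH_s' = [\HH_s,\HH_s] \subset \HH'$, a semisimple subalgebra, it suffices to verify the three hypotheses of Lemma \ref{lemtheo}.

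Condition (1), that $\rho(\HH_s') \simeq \rho(\HH')$ for every $\rho \in \Irr(W)$, I would obtain as follows. For $\rho \not\in \LRef$, Lemma \ref{lemplein} asserts that $\rho(\HH_s')$ equals $\osp(V_\rho)$ or $\sl(V_\rho)$ according to whether $\rho_{\HH'}$ is selfdual, which is precisely the simple ideal $\rho(\HH') = \mathcal{M}(\rho)$ supplied by the structure theorem. For $\rho \in \QRef$ both images equal $\sl(V_\rho)$, by the proposition asserting $\rho(\HH(\SS)') = \sl(V_\rho)$ for such $\rho$; and for the remaining $\rho \in \LRef$, of the shape $\eta \otimes \Lambda^k \rho_0$, the identity $\Lambda^k((\rho_0 \otimes \eta)_{\HH_s'}) = (\eta \otimes \Lambda^k \rho_0)_{\HH_s'}$ shows that both $\rho(\HH_s')$ and $\rho(\HH')$ are the image of $\sl(V_{\rho_0})$ acting on $\Lambda^k V_{\rho_0}$, hence isomorphic.

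The crux is conditions (2) and (3), and this is exactly where the hypothesis \emph{$d$ odd, or $W$ exceptional} enters. Condition (2), stating $\rho^1_{\HH_s'} \simeq \rho^2_{\HH_s'}$ if and only if $\rho^1_{\HH'} \simeq \rho^2_{\HH'}$, is Proposition \ref{propduauxexcept} in the exceptional case and the immediately preceding proposition in the case $G(de,e,r)$ with $d$ odd (for which the condition on $|\Sp \rho^1(t_1)|$ is automatically irrelevant). For the duality condition (3) I would combine these with Proposition \ref{propduauxHH}: the latter describes exactly when $\rho^1_{\HH_s'} \simeq (\rho_{\HH_s'})^*$, in terms of a twist $\rho^1 \simeq \rho^* \otimes \chi$, and records that in that situation $\rho^1_{\HH'} \simeq (\rho^*)_{\HH'}$ as well, so that the dual-isomorphism pattern of the $\HH_s'$-restrictions coincides with that of the $\HH'$-restrictions.

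With the three hypotheses verified, Lemma \ref{lemtheo} yields $\HH_s' = \mathcal{M} = \HH'$. I expect the main obstacle to be condition (3): one must ensure that no accidental self- or dual-isomorphism among the $\HH_s'$-restrictions arises that is invisible for $\HH'$. This is precisely the role played by the parity of $d$: when $d$ is even, the automorphism $\mathfrak{c}$ together with the character $\chi$ can create extra identifications (so that in general $\HH_s' \neq \HH'$), whereas for $d$ odd, or for the exceptional groups where the pseudo-reflection orders are too small to support such a $\chi$, these do not occur.
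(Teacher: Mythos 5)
Your proposal is correct and takes essentially the same route as the paper: there the corollary is stated precisely as a consequence of lemma \ref{lemtheo}, applied exactly as you do with $\g = \HH_s' \subset \mathcal{M} = \HH'$, hypothesis (1) coming from lemma \ref{lemplein} together with the $\QRef$/$\LRef$ cases, hypothesis (2) from proposition \ref{propduauxexcept} (exceptional case) and the preceding proposition for $G(de,e,r)$ with $d$ odd, and hypothesis (3) from proposition \ref{propduauxHH}. Your closing remark about the parity of $d$ also matches the paper, which immediately afterwards shows that $\HH_s' \not\simeq \HH'$ when $d>2$ is even and $r \geq 3$, so the hypothesis is sharp.
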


We consider the special case $e = 1$, and $\rho^1,\rho^2$
with $\rho^1_{\HH'} \not\simeq \rho^2_{\HH'}$
but $\rho^1_{\HH_s'} \simeq \rho^2_{\HH_s'}$. By the arguments
above $d>2$ is even, and $\rho^1(t)$ admits two eigenvalues.
This means that $\rho^1$ is labelled by
a multipartition $(a_0,\dots,a_{d-1})$ with two non-empty parts
$a_i,a_j$ with $i \neq j$, and $\Sp \rho^1(t_1) = \{ \zeta^{a_i},
\zeta^{a_j} \}$ with $\zeta = \exp(2 \ii \pi/d)$.
Then $\rho^3 = \rho^1 \circ \mathfrak{c}$ corresponds
to the multipartition $(b_0,b_1 , \dots, b_{d-1}) = (a_0,a_{d-1},a_{d-2},\dots,a_2,a_1)$,
and tensoring by $\chi$ with $\chi(t_1) = - a_i a_j = \zeta^k$ with $\zeta^k =
-\zeta^{i+j} = \zeta^{d/2 + i + j}$, e.g. $k = d/2 + i + j$,
leads to $\rho^2 = \rho^3 \otimes \chi$
labelled by $(b_{0-k},b_{1-k},\dots, b_{d-2-k},b_{d-1-k})$.
For instance, if $d=4,r=3$ and $\rho^1$ is labelled by $([2],[1],\emptyset,\emptyset)$,
we have $\XX(\rho^1) = \{ \un \}$
and the only possibility is for $\rho^2$ labelled by $(\emptyset,\emptyset,[1],[2])$.
More generally, if $\rho^1$ is the defining representation of $W$, labelled
by $([r-1],[1],\emptyset)$, then $\XX(\rho^1) = \{ \un \}$, but
$\rho^1_{\HH_s'} \simeq \rho^2_{\HH_s'}$ with $\rho^1 \not\simeq \rho^2$ as the formulas above guarantee, for $d> 2$
and $r \geq 3$, that the first part of (the multipartition labelling) $\rho^2$
will not be $[r-1]$.

For $e > 1$, this example also provides $\rho^1,\rho^2$ with $\rho^1_{\HH'} \not\simeq \rho^2_{\HH'}$
but $\rho^1_{\HH_s'} \simeq \rho^2_{\HH_s'}$. As a consequence,
we get that the above corollary is sharp.

\begin{prop} If $d > 2$ is even and $r \geq 3$, then $\HH_s' \not\simeq \HH'$.
\end{prop}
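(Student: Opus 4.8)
The plan is to make the corollary sharp by exhibiting, for $W = G(de,e,r)$ with $d>2$ even and $r \geq 3$, a pair $\rho^1,\rho^2 \in \Irr(W)$ whose restrictions to $\HH_s'$ are isomorphic while their restrictions to $\HH'$ are not. Such a pair forces the (always valid) inclusion $\HH_s' \subseteq \HH'$ to be strict, and a dimension count then yields $\HH_s' \not\simeq \HH'$. I would first dispose of the case $e=1$, where $W=G(d,1,r)$; the general case $e>1$ follows verbatim from the same representations, as indicated in the discussion preceding the statement.

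For $e=1$ I would take $\rho^1$ to be the $r$-dimensional reflection representation of $W$, labelled by $([r-1],[1],\emptyset,\dots,\emptyset)$. Two verifications are needed. First, $\XX(\rho^1) = \{\un\}$: since $r \geq 2$, for every $s \in \SS$ the matrix $\rho^1(s)$ is a genuine pseudo-reflection and hence never a scalar, so $\rho^1(s) \not\in \kt$; the defining condition of $\XX(\rho^1)$ then forces $\eta(s)=1$ for all $s \in \SS$, and as $\SS$ generates $W$ this gives $\eta = \un$. Second, using the explicit construction carried out just above the statement, I would set $\rho^2 = (\rho^1 \circ \mathfrak{c}) \otimes \chi$ with $\chi(t_i) = -\alpha\beta$ and $\chi(s)=1$ for $s \not\in c$; this guarantees $\rho^1_{\HH_s'} \simeq \rho^2_{\HH_s'}$, and inspection of the multipartition labels shows that the first part of $\rho^2$ is no longer $[r-1]$, whence $\rho^1 \not\simeq \rho^2$. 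This is exactly the point at which the hypotheses $d>2$ even and $r \geq 3$ are used.

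It then remains to separate the two restrictions and draw the conclusion. Since $\dim \rho^i > 1$, Proposition \ref{propHHisom}(2) gives $\rho^1_{\HH'} \simeq \rho^2_{\HH'}$ if and only if $\rho^2 \simeq \rho^1 \otimes \eta$ for some $\eta \in \XX(\rho^1)$; as $\XX(\rho^1) = \{\un\}$ this means $\rho^1 \simeq \rho^2$, which is false, so $\rho^1_{\HH'} \not\simeq \rho^2_{\HH'}$. Now if one had $\HH_s' = \HH'$ as subalgebras of $\kk W$, then for every $\rho$ the maps $\rho_{\HH_s'}$ and $\rho_{\HH'}$ would be literally the same representation of the same algebra, so $\rho^1_{\HH_s'} \simeq \rho^2_{\HH_s'}$ and $\rho^1_{\HH'} \simeq \rho^2_{\HH'}$ would be the same assertion, contradicting that the former holds and the latter fails. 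Hence $\HH_s' \subsetneq \HH'$ strictly as subspaces of $\kk W$, so $\dim \HH_s' < \dim \HH'$, and two Lie algebras of different finite dimension cannot be isomorphic.

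I expect the main obstacle to be purely bookkeeping: confirming $\XX(\rho^1)=\{\un\}$ and $\rho^1 \not\simeq \rho^2$ through the multipartition combinatorics, both of which are already prepared in the paragraph preceding the statement. The conceptual core, namely that equality of the two subalgebras would make restriction well-defined on isomorphism classes, so that a single pair detecting different restriction behaviour rules it out, is the short logical step on which everything turns and carries no real difficulty.
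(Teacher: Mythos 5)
Your proof is correct and takes essentially the same route as the paper: the paper's own justification is exactly the discussion preceding the statement, which exhibits the defining representation $\rho^1$ labelled $([r-1],[1],\emptyset,\dots)$ together with $\rho^2 = \chi \otimes (\rho^1 \circ \mathfrak{c})$ as a pair satisfying $\rho^1_{\HH_s'} \simeq \rho^2_{\HH_s'}$ but (via $\XX(\rho^1)=\{\un\}$ and the multipartition shift) $\rho^1_{\HH'} \not\simeq \rho^2_{\HH'}$, for $e=1$ and then for $e>1$ by restriction. Your only addition is to make explicit the concluding step that the paper leaves implicit --- equality of the two subalgebras of $\kk W$ would force the two restriction assertions to coincide, so $\HH_s' \subsetneq \HH'$ and the dimension count rules out any abstract isomorphism --- which is a sound and welcome clarification rather than a different argument.
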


\subsection{Cubic Hecke algebras}

We assume that, for all $s \in \SS$, $s^3 = 1$, and that there
exists a single class $c$ of reflecting hyperplanes for $W$. For irreducible
reflection group of rank at least 2, this happens exactly for the exceptional
types $G_4,G_{25},G_{32}$. In that case we have two parameters $\la_1,
\la_2$, $\AAA(W) = \{ (\la_1,\la_2) \in \kk^2 \}$ and $\HH(\la_1,\la_2)$
is generated by the $\la_1 s + \la_2 s^2$ for $s \in \SS$.
Leting $j = \zeta_c = \exp(2 \ii \pi/3)$, we have $v_0 = (1,1)$, $v_1 = (j,j^2)$,
$v_2 = (j^2,j)$. It is easily checked that we have here $\mathcal{L}_3 \subset \mathcal{L}_2$
hence $\AAA(W)^{\times} = \AAA(W) \setminus \bigcup \mathcal{L}_2$. We have
$$
\begin{array}{lcl}
\mathcal{L}_1 &=& \{ \Ker < v_0 - v_1 | \cdot>, \Ker < v_0 - v_2 | \cdot>,
\Ker < v_1 - v_2 | \cdot> \} \\
\mathcal{L}_2 &=& \mathcal{L}_1 \sqcup \{ \Ker < v_0 + v_1 -2v_2| \cdot>,
\Ker < v_0 +v_2 - 2v_1 | \cdot>, \\
 & & \Ker < v_1 +v_2 -2 v_0| \cdot> \} \\
\end{array}
$$

It is clear that $\HH(0,0) = \{ 0 \}$, and we know $\HH(0,1) \simeq \HH(1,0) = \HH_s$,
which has already been studied (moreover $(1,0)$ and $(0,1)$ clearly
belong to $\AAA(W)^{\times}$), so we can assume $\la_1 \la_2 \neq 0$.
Then $\HH(\la_1,\la_2) = \la_1 \HH(1,\la_2/\la_1)$, so we can assume
$\la_1 = 1$ and $\la_2 = a \in \kt$ and let $\HH(a) = \HH(1,a)$.
A straightforward computation shows $(1,a) \in \bigcup \mathcal{L}_1$ iff
$a \in \mu_3$, and $(1,a) \in \bigcup \mathcal{L}_2$ iff $a \in \mu_3 \cup (- \mu_3)$.
In particular $(1,a) \in \AAA(W)^{\times}$ iff $a \not\in \mu_6$.

For $a^6 \neq 1$ (and $a \neq 0$), we have $\HH(\ula)' = \HH'$.
Letting $\Phi \in \Aut(\kk W)$ denote as above the automorphism $s \mapsto j s$
for $s \in \SS$, we have $\Phi(\HH(a)) = \HH(j,j^2a) = j\HH(1,ja) = \HH(1,ja) = \HH(ja)$,
so there are only two cases to consider, $a = 1$ for $a \in \mu_3$,
and $a = -1$ for $a \in - \mu_3$.


If $W = G_4$,
there are three representations in $\overline{\QRef} = \QRef = \LRef$
of dimension 2, three 1-dimensional characters and a 3-dimensional
one that we denote $\rho_3$. The 2-dimensional ones are the restrictions
to a parabolic subgroup of type $G_4$ of the representations
$U_{\alpha,\beta}$ of $G_{25}$. In order to avoid confusion, we denote
them $U_{\gamma}$ for $\{ \alpha,\beta,\gamma \} = \mu_3(\C)$. The 3-dimensional
one is the restriction $\bar{V}$ of $V \in \Irr(G_{25})$.
We assume that $\SS$ is given by the distinguished pseudo-reflections.

We have $\HH' = \sl(U_1) \times \sl(U_j) \times \sl(U_{j^2}) \times \sl(\bar{V}) \simeq (\sl_2)^3 \times \sl_3$.
For $a = 1$, we get by computer that $\dim \HH(1) = 15$,
$\dim \HH(1)' = 14$, $\dim Z(\HH(1)) = 1$,
and that the image in each irreducible representation of $W$ of
$\HH(1)'$ is the same as $\HH'$, except for $U_{1}$, where the image
is 0. Since $\HH(1)'$ is semisimple this implies
$\HH(1)' = \sl(U_j) \times \sl(U_{j^2}) \times \sl(\bar{V})
\simeq \sl_2^2 \times \sl_3$.
Moreover, the center of $\HH(1)$, having dimension 1,
is also spanned by $T_{\SS}(1) = \sum_{s \in \SS} s +  s^2$
in this case.
Another argument
for this last fact, that will be used for $G_{25}$, is to notice that the
image of $\HH(1)$ in $U_1$ is $\C$ (more precisely, $s +  s^2$ acts by $-1$).
It follows that $Z(\HH(1)) \subset Z(\HH) \subset Z(\C W)$.
Since $\HH(1)$ is reductive and $\HH(1)' \subset \Ker p$,
we thus recover $Z(\HH(1)) = p(\HH(1)) = \C T_{\SS}(1)$.

For $a =-1$, we get by computer $\dim \HH(-1)' = 6$,
and the image in each of the irreducible representations of $W$
of $\HH(-1)'$ is the same as $\HH'$, except for $\bar{V}$, for
which the image is of dimension 3. Moreover, since $a \not\in \mu_3$,
$\HH(-1)'$ is semisimple. We check that the image of $\HH(-1)'$
in $\sl(U_j) \times \sl(U_{j^2}) \times \sl(\bar{V})$ has dimension 3,
hence $\HH(-1)' \simeq \sl(U_1) \times \sl_2 \simeq (\sl_2)^2$, the restriction
of the three representations $U_j,U_{j^2},\bar{V}$ to $\HH(\ula)'$ factorizing
through the same ideal $\sl_2$. More previsely, we can identify
$\HH(-1)'$ with $\sl(A_1) \times \sl(A_2)$ with $A_1,A_2$ two vector
spaces with $\dim A_i = 2$, such that the representations of
$\HH(-1)'$ corresponding to $U_1,U_j,U_{j^2},\bar{V}$
are $A_1,A_2,A_2, S^2 A_2$, as $\sl_2$ admits only one irreducible
representation in each dimension.


We now consider $W = G_{25}$. For $a = 1$, we get $\rho(\HH(1)') = \rho(\HH')$
for each $\rho \in \Irr(W)$ except $U_{j,j^2}$ and $U'_{j^2,j}, U'_{j,j^2}$
(recall that $U'_{j,j^2} \approx U'_{j^2,j}$), in which case $\rho(\HH(1)') = \{ 0 \}$.
Moreover, we check that $\rho(\HH(1)) = \C$, and more precisely $\rho(s + s^2) = -1$
in these latter cases. 
We check by computer that, for any $\rho^1,\rho^2$ not in these
exceptional cases and $\rho^1 \not\approx \rho^2$, then $(\rho^1 \oplus \rho^2)(\HH(1)')$
has dimension  $\dim \rho^1(\HH(1)') + \dim \rho^2(\HH(1)')$, hence
$(\rho^1 \oplus \rho^2)(\HH(1)') = \rho^1(\HH(1)') \oplus \rho^2(\HH(1)')$.
This proves that the simple ideals determined by the non-exceptional $\rho \in \Irr(W)$
(up to $\approx$) never coincide, hence $\HH(1)' \simeq \sl_2^2 \times \sl_3^3  
\times \sl_6^6 \times \sl_8^3 \times \sl_9^2$ is the kernel of
the representation $U_{j,j^2} \oplus U'_{j^2,j}$ restricted to $\HH'$.
We get that $Z(\HH(1)) = \C T_{\SS}(1,1)$ by the same argument as above.

The situation for $\HH(-1)$ is a lot messier. As before, computing the
dimensions of $\rho(\HH(-1)')$ determines the type of the corresponding
ideals (notice that $\sl_2,\sl_3,\sl_2 \times \sl_3$
are the only semisimple algebras of dimensions 3,8, and 11 respectively) ;
computing the dimensions of the $(\rho^1 \oplus \rho^2)(\HH(-1)')$
determines the simple ideals of $\HH(-1)$. We get that
$$
\begin{array}{lcl}
\HH(-1) &= &\sl(A_1) \times \sl(A_2) \times \sl(B_1) \times \sl(B_2) \times
\sl(C) \times \sl(D) \times \sl(E) \\ &\simeq& \sl_2^2 \times \sl_3^2 \times \sl_6 \times \sl_8 \times
\sl_9 \\
\end{array}
$$
with $\dim A_i = 2$, $\dim B_i = 3$, $\dim C = 6$, $\dim D = 8$ and $\dim E = 9$.
Under this identification, the isomorphism type of the
representations $\rho_{\HH(-1)'}$ is then determined using
$\dim \rho$, the dimension of the invariants in
$\rho^1_{\HH(-1)'} \otimes \rho^2_{\HH(-1)'}$ (which distinguish e.g.
between $V$ and $V^*$ for a representation of $\sl(V)$),
and the dimensions of $(\rho^1_{\HH(-1)'} \otimes \rho^2_{\HH(-1)'})( \mathsf{U} \g)$,
which often determines the number of irreducible components in
$(\rho^1_{\HH(-1)'} \otimes \rho^2_{\HH(-1)'})$. The result is
tabulated below. In this table, $S^2 V$ denote the symmetric square of $V$, and $F_{[2,1]}$
denotes the Schur functor associated to the partition $[2,1]$, so that
$V^{\otimes 3} = S^3 V \oplus \Lambda^3 V \oplus 2 F_{[2,1]}(V)$.

$$
\begin{array}{|c||cccccc|}
\hline
\hline
\rho & U_{j,j^2} & U_{1,j} & U_{1,j^2} & V & &  \\
\hline
\rho(\HH(-1)') & \sl(V_{\rho}) & \sl(V_{\rho})& \sl(V_{\rho})& \sl_2 & & \\
\rho_{\HH(-1)'} & A_1  & A_2 & A_2 & S^2 A_2 & & \\
\hline
\hline
\rho & U'_{j,1} & U'_{1,j^2} & U'_{j^2,j} & U'_{j^2,1} & U'_{1,j} & 
   U'_{j,j^2}  \\
\hline
\rho(\HH(-1)') & \sl(V_{\rho})& \sl(V_{\rho})& \sl(V_{\rho})& \sl(V_{\rho})&\sl(V_{\rho}) &\sl(V_{\rho}) \\
\rho_{\HH(-1)'} & B_1 & B_1 & B_2 & B_1^* & B_1^* & B_2^*  \\
\hline
\hline
\rho & V_{j^2,1} & V_{j,j^2} & V_{1,j} & V_{j,1} & V_{j^2,j} & V_{1,j^2}  \\
\hline
\rho(\HH(-1)') & \sl_3 & \sl(V_{\rho})&\sl_3 \times \sl_2 & \sl_3 & \sl(V_{\rho}) & \sl_3 \times \sl_2  \\
\rho_{\HH(-1)'} &  S^2(B_1^*) & C & B_1 \otimes A_2 & S^2 B_1 & C^* & (B_1^*) \otimes A_2 \\
\hline
\hline
\rho & W_1 & W_{j^2} &W_j  & X & X^* &  \\
\hline
\rho(\HH(-1)') & \sl_3 & \sl(V_{\rho}) & \sl(V_{\rho}) & \sl(V_{\rho})& \sl(V_{\rho}) & \\
\rho_{\HH(-1)'} & F_{[2,1]}(B_1) & D & D^* & E & E^* & \\
\hline
\end{array}
$$

\subsection{The spetsial Hecke algebra}

In view of the Brou\'e-Malle-Michel `Spetses' program, a specialization
of interest is when all the $\la_i$ are equal. We denote
$\HH_{st}$ the Lie subalgebra of $\HH$ generated by the
$\sum_{k=1}^{e_c-1} s^k$ for $s \in c \subset \SS$. We have
$\HH_0 \subset \HH_{st} \subset \HH$. The decomposition
of $\HH_{st}$ for the exceptional groups $G_4,G_{25}$ were done in
the previous section.


We contend ourselves here to deal with the groups $G(d,1,r)$.

\begin{prop} \label{irrspets} Let $\bla = (\la_0,\dots,\la_{d-1})$ be a multipartition of $r$,
$W = G(d,1,r)$, and $\rho \in \Irr(W)$ the representation
of $W$ labelled by $\bla$. 
If $\la_0 \neq \emptyset$, then $\rho_{\HH_{st}}$
is irreducible. If $\la_0  = \emptyset$, then $\rho(\HH_{st}') = \rho(\HH_0')$.
\end{prop}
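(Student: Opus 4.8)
The plan is to reduce both assertions to a single computation. For a distinguished pseudo-reflection $s = t_1 = \diag(\zeta,1,\dots,1)$ of order $d$, where $\zeta$ is a primitive $d$-th root of unity, the spetsial generator $\sigma_s = \sum_{k=1}^{d-1} s^k$ acts in any representation $\rho$ by $\rho(\sigma_s) = d\,P_s - \id$, where $P_s = \frac1d \sum_{k=0}^{d-1}\rho(s)^k$ is the projection onto $\ker(\rho(s)-1)$. Thus everything hinges on whether $\rho(t_1)$ admits the eigenvalue $1$. First I would record the combinatorial fact, in the seminormal model for $G(d,1,r)$ (see \cite{ARIKIKOIKE}), that $t_1$ acts on the basis vector attached to a standard $\bla$-tableau $T$ by $\zeta^{c(T)}$, where $c(T)$ is the index of the component containing the box with entry $1$; since that box is forced to sit at the corner $(1,1)$ of a nonempty component, the eigenvalue $1$ occurs for $\rho(t_1)$ if and only if $\la_0 \neq \emptyset$.

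The case $\la_0 = \emptyset$ is then immediate: here $\ker(\rho(t_1)-1)=0$, so $P_{t_1}=0$ and $\rho(\sigma_{t_1}) = -\id$, and by conjugacy the same holds for every $s \in \SS_+$. Hence all generators of $\HH_{st}$ coming from $\SS_+$ act as the central scalar $-\id$, so that $\rho(\HH_{st}) = \rho(\HH_0) + \kk\,\id$, where $\HH_0 = \HH(\SS_0)$ is the (known) infinitesimal Hecke algebra of the $2$-reflection group $W_0 = G(d,d,r)$. Passing to derived algebras kills the central scalar and yields $\rho(\HH_{st}') = [\rho(\HH_{st}),\rho(\HH_{st})] = \rho(\HH_0')$, which is the second claim.

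For the case $\la_0 \neq \emptyset$ I would argue that $\rho_{\HH_{st}}$ is irreducible by showing that the associative algebra $\mathcal A$ generated by $\rho(\HH_{st})$ is all of $\End(V_\rho)$. Decompose $\Res_{W_0}\rho = \rho_1 \oplus \cdots \oplus \rho_m$ into (pairwise non-isomorphic, by Clifford theory) $W_0$-irreducibles, cyclically permuted by $\rho(t_1)$, each $V_{\rho_i}$ being mapped isomorphically onto $V_{\rho_{i+1}}$. Since $W_0$ is a $2$-reflection group, \cite{IH2} gives that each $\rho_i(\HH_0)$ acts irreducibly on $V_{\rho_i}$, so the algebra generated by $\rho(\HH_0)$ is the block-diagonal $\bigoplus_i \End(V_{\rho_i})$, and it suffices to show that $P_{t_1} \in \mathcal A$ links all the blocks. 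Because any nonzero fixed vector of $\rho(t_1)$ has a nonzero component in every block, and such a vector $f$ exists precisely when $\la_0\neq\emptyset$, a short computation with the $W_0$-invariant inner product shows that $P_{t_1}$ carries each block-component of $f$ into the (all-blocks-nonzero) fixed space; hence every $(i,j)$-entry of $P_{t_1}$ is nonzero. Sandwiching $P_{t_1}$ between matrix units of the blocks then produces every $\Hom(V_{\rho_j},V_{\rho_i})$, so $\mathcal A = \End(V_\rho)$ and $\rho_{\HH_{st}}$ is irreducible.

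The main obstacle I anticipate is precisely the case $\la_0 \neq \emptyset$: unlike the generic parameter situation treated earlier, the spetsial parameter lies on the arrangement $\mathcal L_1$ (since $P(\zeta^r)=\sum_{k=1}^{d-1}\zeta^{rk}$ takes only the two values $d-1$ and $-1$), so one cannot recover $\rho(t_1)$ itself from $\HH_{st}$, only the projection $P_{t_1}$. The argument must therefore exploit the precise Clifford-block geometry of $\ker(\rho(t_1)-1)$ rather than any formula for $\rho(t_1)$, and it is this spreading of the fixed space across all blocks, together with the eigenvalue-$1$ combinatorics, that carries the whole proof.
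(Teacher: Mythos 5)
Your argument is correct, but for the key case $\la_0\neq\emptyset$ it takes a genuinely different route from the paper's. The paper works inside an arbitrary $\HH_{st}$-invariant subspace $U$: since $\SS_0\subset\HH_0\subset\HH_{st}$, $U$ is a $W_0$-submodule, hence a sum of eigenspaces of the shift operator $S$ of \cite{MARINMICHEL}; applying $P_i$ to a suitable nonzero $v\in U$ gives a nonzero $w$ supported on the multitableaux with $i\in T_0$, the vectors $w,S(w),S^2(w),\dots$ have pairwise disjoint supports (hence are independent), and their discrete Fourier combinations produce an eigenvector of $S$ in $U$ for every eigenvalue, so $U$ meets every $W_0$-component and equals $V_{\rho}$. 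You instead compute the associative envelope $\mathcal{A}$ of $\rho(\HH_{st})$: it contains $\rho(\kk W_0)=\bigoplus_i\End(V_{\rho_i})$ together with the projection $P_{t_1}$, whose block entries are all nonzero by your two observations --- a nonzero $\rho(t_1)$-fixed vector has nonzero components in every block (because $\rho(t_1)$ permutes the blocks cyclically), and $\langle P_{t_1}f_j,f\rangle=\|f_j\|^2\neq 0$ for each block component $f_j$ of a nonzero fixed vector $f$ --- so sandwiching with the block projections yields $\mathcal{A}=\End(V_{\rho})$. This replaces the paper's multitableau support combinatorics by Clifford theory plus an inner-product argument (you only use the seminormal model to see that the eigenvalue $1$ of $\rho(t_1)$ occurs iff $\la_0\neq\emptyset$), and it gives the slightly stronger conclusion that the associative algebra is the full matrix algebra. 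Two small repairs to your write-up: the inner product must be $W$-invariant, not merely $W_0$-invariant, so that $P_{t_1}$ is self-adjoint (average a $W_0$-invariant form over the cyclic group generated by $t_1$); and the block-diagonal claim should be justified via $\mathcal{A}\supset\rho(\kk W_0)$ and the pairwise non-isomorphism of the $\rho_i$ as $W_0$-modules, since irreducibility of each $\rho_i(\HH_0)$ alone would not exclude a diagonal identification between blocks isomorphic as $\HH_0$-modules. The case $\la_0=\emptyset$ is handled in essentially the same way in both proofs.
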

\begin{proof}
On note $P_i = (1/d) \sum_{k=0}^{d-1} \rho(t_i)^k$.
Letting $\bT = (T_0,\dots,T_{d-1})$
be a multitableau of shape $\bla$, we have $P_i \bT = 0$ if
$i \not\in T_0$ and $P_i \bT = \bT$ otherwise.
If $\la_0 = \emptyset$, then $\rho(\mathsf{U}\HH_{st}) = 
\rho(\mathsf{U}\HH_{0})$ hence $\rho_{\HH_{st}}$ is semisimple.

Now assume $\la_0 \neq \emptyset$ and $U$ be a subspace
setwise invariant under $\HH_{st}$.
Since $\HH_0 \subset \HH_{st}$
it is invariant under $W_0 = G(d,d,r)$, hence a sum of the
irreducible components. In particular, it is irreducible
unless the sequence $\la_0,\la_1,\dots$ has a period $0 < u < d$.
In that case, the irreducible components for $W_0$
are the eigenspaces of the endomorphism $S$ of order $d/u$
defined by $S(\bT) = (T_{u},T_{1+u},\dots,T_{d-1+u})$
(see \cite{MARINMICHEL} \S 2.4), hence $U$ is setwise
invariant $S$. Now let $v = \sum \alpha_{\bT} \bT \in U$
with $v \neq 0$, that is $\alpha_{\bT^0} \neq 0$ for some
multitableau $\bT^0$. Since $\la_0 \neq \emptyset$ there
exists some $i$ in $(\bT^0)_0$. Then $w = P_i v = \sum_{i \in
\bT_0} \alpha_{\bT} \bT \in U \setminus \{ 0 \}$.
By definition of $S$ the family $w,S(w),\dots,S^{d-1}(w) \in U$ is
free, hence the $\sum_{k=0}^{d/u-1} \zeta^{sk} S^k(w)$
for $\zeta = \exp(2 \ii \pi d/u)$ afford
eigenvectors for all eigenvalues of $S$.
It follows that $U$ meets every irreducible components
of the restriction to $W_0$, hence
$U$ is the whole space and $\rho_{\HH_{st}}$ is irreducible.
\end{proof}


We assume $r \geq 3$.

\begin{prop} Let $W = G(d,1,r)$ with $r \geq 3$, and $W_0 = G(d,d,r) < W$.
Let $\rho^1,\rho^2 \in \Irr(W)$ associated to multipartitions $\bla,\bmu$.
If both $\bla, \bmu$ have a single nonempty part, then $\rho^1_{\HH_{st}'} \simeq \rho^2_{\HH_{st}'}$
iff this single part is the same. Assuming this is not the case,
$\rho^1_{\HH_{st}'} \simeq \rho^2_{\HH_{st}'}$ iff
$\Res_{W_0} \rho^1 = \Res_{W_0} \rho^2$
when $\la_0 = \mu_0 = \emptyset$,
and $\rho^1_{\HH_{st}'} \simeq \rho^2_{\HH_{st}'}$ iff $\bla = \bmu$
otherwise.
\end{prop}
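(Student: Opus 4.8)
The plan is to use Proposition \ref{irrspets} to push everything down to the $2$-reflection group $W_0 = G(d,d,r)$, whose infinitesimal Hecke algebra $\HH_0 \subset \HH_{st}$ is fully described in \cite{IH2}, and then to recover the extra information carried by the order-$d$ generators. The key element attached to each coordinate is $y_j = \sum_{k=1}^{d-1} t_j^k = d P_j - 1$, where, in the notation of Proposition \ref{irrspets}, $P_j = (1/d)\sum_{k=0}^{d-1}\rho(t_j)^k$ is the projection onto the $t_j$-fixed subspace of $V_\rho$; its rank records the contribution of the $0$-th component $\la_0$ of the multipartition. I would also record the elementary remark that $\HH_0' = [\HH_0,\HH_0] \subseteq [\HH_{st},\HH_{st}] = \HH_{st}'$, so that any isomorphism $\rho^1_{\HH_{st}'} \simeq \rho^2_{\HH_{st}'}$ restricts to an isomorphism $\rho^1_{\HH_0'} \simeq \rho^2_{\HH_0'}$.

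First I would dispose of the easy implications. If $\bla$ and $\bmu$ each have a single nonempty part, then $\rho(t_j)$ is a scalar, so every $y_j$ acts by a scalar, $\HH_{st}'$ acts through $\rho(\HH_0')$, and $\Res_{W_0}\rho$ is the pullback of the $\mathfrak{S}_r$-representation attached to that single partition; hence equal single parts give isomorphic $\HH_{st}'$-representations, independently of the position, which settles case (I). When $\la_0 = \emptyset$, Proposition \ref{irrspets} shows each $y_j$ acts by the scalar $-1$, so the abstract $\HH_{st}'$-representation on $V_\rho$ depends only on $\Res_{W_0}\rho$; this gives the backward direction when $\la_0 = \mu_0 = \emptyset$. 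Finally $\bla = \bmu$ trivially gives an isomorphism, covering the backward direction of the remaining case.

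For the forward implications I would fix an intertwiner $P$ with $P\rho^2(\xi)P^{-1} = \rho^1(\xi)$ for all $\xi \in \HH_{st}'$. Restricting to $\HH_0'$ and invoking the identification relation for $W_0$ from \cite{IH2} (so that $P\rho^2(s)P^{-1} = \rho^1(s) + \om_s$ for each order-$2$ reflection $s$, exactly as in the proof of Proposition \ref{propHHisom}), together with Clifford theory for the pair $W_0 \subset W$ as described in \cite{MARINMICHEL}, I would deduce that $\rho^1_{\HH_0'} \simeq \rho^2_{\HH_0'}$ forces $\Res_{W_0}\rho^1 = \Res_{W_0}\rho^2$; equivalently, $\bla$ and $\bmu$ lie in the same orbit under the cyclic shift induced by tensoring with powers of the character $\alpha$ of kernel $W_0$. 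This already yields case (II). To upgrade this to $\bla = \bmu$ when $\la_0 \neq \emptyset$, I would feed in the order-$d$ generators: writing $P\rho^2(P_j)P^{-1} = \rho^1(P_j) + c_j$ and using that the left-hand side is idempotent, the relation $(\rho^1(P_j)+c_j)^2 = \rho^1(P_j)+c_j$ forces $c_j = 0$ as soon as $\rho^1(P_j)$ is a non-scalar idempotent, that is precisely when $\la_0 \neq \emptyset$ and $\bla$ is not a single part. Then $\operatorname{rank}\rho^1(P_j) = \operatorname{rank}\rho^2(P_j)$, and since this common rank measures the contribution of the $0$-th component it breaks the residual cyclic-shift ambiguity, pinning down $\la_0 = \mu_0$ and hence $\bla = \bmu$; when $\la_0 = \emptyset$ instead $\rho(P_j) = 0$ carries no information, consistently with the weaker conclusion of case (II).

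The main obstacle will be the step converting the Lie-algebraic isomorphism $\rho^1_{\HH_0'} \simeq \rho^2_{\HH_0'}$ into the genuine equality of $W_0$-modules $\Res_{W_0}\rho^1 = \Res_{W_0}\rho^2$: the general $W_0$-identification relation of \cite{IH2} permits twists by linear characters of $W_0$ and by duality, and one must verify that for representations obtained by restriction from $G(d,1,r)$ these twists collapse, so that the relation reduces to plain equality up to the $\mu_d$-shift. This forces reliance on the explicit combinatorial description of $\Res_{W_0}$ and of $\XX(\rho)$ for $G(d,1,r)$ from \cite{MARINMICHEL}, and the hypothesis $r \geq 3$ enters here to keep $W_0$ within the range of the structure theory of \cite{IH2}. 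The second delicate point is the bookkeeping that separates the degenerate single-part cases, where $P_j$ is scalar and the idempotency argument degenerates, from the generic ones.
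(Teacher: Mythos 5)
Your skeleton genuinely matches the paper's own proof: restrict the $\HH_{st}'$-isomorphism to $\HH_0' \subset \HH_{st}'$ and quote \cite{IH2} to conclude that $\bmu$ is a cyclic shift of $\bla$, i.e.\ $\mu_i = \la_{i+k}$ (this reduction, which you flag as your main obstacle, is handled in the paper by exactly this citation); dispose of the cases where the $\rho(\mathbf{t}_j) = d\,\rho(P_j)$ act by scalars (single nonempty part, or $\la_0 = \mu_0 = \emptyset$) essentially as you do; and kill the scalar shift $c_j$ by idempotency of $P_j$, which is the paper's spectrum argument ($\mathrm{Sp}\,\rho(\mathbf{t}_j) \subset \{0,d\}$) in disguise. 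Up to that point the proposal is sound.

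The gap is in your last step, and it is fatal as written. From $c_j = 0$ you retain only $\rk \rho^1(P_j) = \rk \rho^2(P_j)$, and rank is far too weak an invariant. Since the $t_j$ are permuted by conjugation under $\mathfrak{S}_r$, and each standard multitableau of shape $\bla$ has $j \in T_0$ for exactly $|\la_0|$ values of $j$, one gets $\rk \rho^1(P_j) = \dim(\rho^1)\,|\la_0|/r$; so equal ranks only yield $|\la_0| = |\mu_0|$, not $\la_0 = \mu_0$. Concretely, for $d = 3$, $r = 4$, the multipartition $\bla = ([2],[1,1],\emptyset)$ and its shift $\bmu = ([1,1],\emptyset,[2])$ have equal restrictions to $W_0$ and equal-rank (indeed conjugate) projections, yet the proposition asserts precisely that they are \emph{not} isomorphic over $\HH_{st}'$: your argument cannot separate them. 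Moreover the final inference ``$\la_0 = \mu_0$, hence $\bla = \bmu$'' is also false for cyclic shifts: with $d=4$ take $\bla = ([1],[2],[1],[3])$ and its shift by $2$, $\bmu = ([1],[3],[1],[2])$. The underlying point is that rank is a conjugation invariant, and no conjugation invariant of a single idempotent can detect the shift $k$; what is needed is the \emph{relative position} of the two fixed spaces. That is what the paper exploits: it realizes both representations on the same space of multitableaux with $\rho^1 = \rho^2$ on $W_0$ and $\rho^2(t_j) = \zeta^k \rho^1(t_j)$, so that the $\HH_{st}'$-isomorphism becomes the operator identity $\rho^2(\mathbf{t}_1) = \rho^1(\mathbf{t}_1) + \om_1$ with $\om_1 = 0$; evaluating both sides on a multitableau $\bT$ with $1 \in T_0$ gives $d\,\bT$ on one side and $\bigl(\sum_{j=0}^{d-1} \zeta^{jk}\bigr)\bT = 0$ on the other when $k \neq 0$, forcing $k = 0$. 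Equivalently, the images of $\rho^1(P_1)$ and $\rho^2(P_1)$ are eigenspaces of the \emph{same} operator $\rho^1(t_1)$ for the distinct eigenvalues $1$ and $\zeta^{-k}$, so they intersect trivially; if they coincide and are nonzero, then $k=0$. If you insist on carrying an explicit intertwiner (which commutes with the common $\HH_0'$-action) rather than trivializing it as the paper does, you must extract this positional information from that commutation constraint; rank bookkeeping alone cannot do it.
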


\begin{proof}
We assume that $\rho^2_{\HH_{st}'} = \rho^1_{\HH_{st}'}$.
Since $\rho^1_{\HH_0'} \simeq \rho^2_{\HH_0'}$,
we know from \cite{IH2} that $\rho^2$ is associated to
a multipartition $\bmu = (\mu_0,\dots,\mu_{d-1})$
with $\mu_i = \la_{i+k}$ for all $i$ and a given $k \in [1,d-1]$.
We thus can assume that $V_{\rho^1} = V_{\rho^2}$ has
for basis the collection of multitableaux of shape $\bla$, with
$\rho^1$ affording the usual action on them,
$\rho^1(w) = \rho^2(w)$ for $w \in W_0$, and $\rho^2(t_i) \bT = 
\zeta^j \bT$ iff $\rho^1(t_i) \bT = \zeta^{j-k}$.
Now let $\mathbf{t}_i = \sum_{j=0}^{d-1} t_i^j$. Since
$\rho^1_{\HH_{st}'} \simeq \rho^2_{\HH_{st}'}$ we have $\rho^2(
\mathbf{t}_i) = \rho^1(\mathbf{t}_i) + \om_i$
for some $\om_i \in \C$. 
Since $\Sp(\rho^1(\mathbf{t}_i)),\Sp(\rho^2(\mathbf{t}_i))
\subset \{ 0 , d \}$ we get that, either
the $\rho^1(\mathbf{t}_i)$ are scalars, or $\om_i = 0$ for all $i$.
If the $\rho^1(\mathbf{t}_i)$ are not scalars, then
$\rho^1(\mathbf{t}_i) = \rho^2(\mathbf{t}_i)$
for all $i$, and also $\la_0 \neq \emptyset$ (otherwise $\rho^1(\mathbf{t}_i) = 0$).
Let then $\bT$ be a multitableau of shape $\bla$
with $1 \in T_0$. We have
$\rho^1(\mathbf{t}_1) \bT = d \bT$ and $\rho^2(\mathbf{t}_1) \bT = 0$
unless $k = 0$, which implies $\rho^1 = \rho^2$.

We now assume that the
$\rho^1(\mathbf{t}_i)$,
$\rho^2(\mathbf{t}_i)$ are scalars. Then
$\rho^1_{\HH_{st}'} \simeq \rho^2_{\HH_{st}'}$
iff $\rho^1_{\HH_{0}'} \simeq \rho^2_{\HH_{0}'}$, which is equivalent
to $\Res_{W_0}
\rho^1 = \Res_{W_0} \rho^2$ by \cite{IH2}.
This
case means that either $\bla, \bmu$ have a single part,
that is the $\rho^1(t_i)$ and $\rho^2(t_i)$ themselves are scalars,
and then $\rho^1_{\HH_0'} \simeq \rho^2_{\HH_0'}$
implies that these single parts are the same,
or $\la_0 = \mu_0 = \emptyset$. This concludes the proof,
as the converse implications are obvious.
\end{proof}


\begin{prop} \label{propdualgd1r} Let $W = G(d,1,r)$ with $r \geq 3$.
Let $\rho \in \Irr(W)$. If $\rho^2 \in \Irr(W)$ satisfies
$(\rho^2)_{\HH_{st}'} \simeq (\rho_{\HH_{st}'})^*$, then
$(\rho^2)_{\HH'} \simeq (\rho_{\HH'})^*$.
\end{prop}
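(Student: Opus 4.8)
The plan is to follow the scheme of Proposition~\ref{propduauxHH}, the new feature being that the spetsial generators record only coarse spectral data, so that the work lies in recovering the finer $\HH'$-information. First I would identify the spaces of $\rho$ and $\rho^2$ with a common $V$ and, using the irreducibility of $\rho_{\HH_{st}'}$ from Proposition~\ref{irrspets}, produce $P\in\GL(V)$ with $P\rho^2(x)P^{-1}=-{}^t\rho(x)+\om_x$ for $x$ ranging over the generators of $\HH_{st}$: the order-two reflections $s\in\SS_0$ and the elements $\mathbf{t}_i=\sum_{j=0}^{d-1}t_i^{\,j}=dP_i$, where $P_i$ is the projector onto the part-$0$ eigenspace of $\rho(t_i)$ as in the proof of Proposition~\ref{irrspets}; the scalars $\om_x$ arise from the central corrections $p(x)$ exactly as there. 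For $s\in\SS_0$ the order-two computation of Proposition~\ref{propduauxHH} forces $\om_s=0$ (or $\rho(s)$ scalar), so on the subgroup $\mathfrak{S}_r$ generated by $\SS_0$ one reads off that $\rho^2$ restricts to the sign-twisted contragredient $\mathrm{sgn}\otimes(\Res_{\mathfrak{S}_r}\rho)^*$, up to a character of $\XX(\rho)$. For the $\mathbf{t}_i$, the spectrum of $\rho(\mathbf{t}_i)$ lies in $\{0,d\}$, so matching the two possible eigenvalues forces $\om_{\mathbf{t}_i}=d$ whenever part $0$ is nonempty and gives $P_i^{(2)}=P^{-1}(1-{}^tP_i)P$ for the part-$0$ projector $P_i^{(2)}$ of $\rho^2$; in particular $\rk P_i^{(2)}=\dim V-\rk P_i$.

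I would then split according to whether $\la_0$, the part-$0$ of the multipartition $\bla$ of $\rho$, is empty. If $\la_0\neq\emptyset$, the projector $P_i$ separates the trivial eigenvalue of $t_i$, so the relation $\rk P_i^{(2)}=\dim V-\rk P_i$ together with the $\mathfrak{S}_r$-symmetry of the projectors fixes $|\mu_0|$, and combined with the $\mathfrak{S}_r$-restriction this should determine the multipartition $\bmu$ of $\rho^2$ rigidly; by the preceding proposition the class of $\rho_{\HH_{st}'}$ already determines $\bla$, so $\rho^2$ must coincide with the representation $\rho^*\otimes\chi$ produced by Proposition~\ref{propduauxHH} (with $\chi$ the product-of-eigenvalues character), whence $\rho^2_{\HH'}\simeq(\rho_{\HH'})^*$ by that proposition. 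The case of three or more nonempty parts should be excluded here, since the rank constraint is then incompatible with a genuine multipartition of total size $r$. If $\la_0=\emptyset$ then $\rho(\mathbf{t}_i)=0$, so by Proposition~\ref{irrspets} the hypothesis is exactly $\HH_0'$-duality for $W_0=G(d,d,r)$; I would invoke \cite{IH2} for this $2$-reflection group to identify $\Res_{W_0}\rho^2$, then lift the identification through $\HH_0'\subseteq\HH_{st}'\subseteq\HH'$ using Proposition~\ref{propHHisom} to recover the $\HH'$-statement.

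I expect the main obstacle to be precisely this last case. Because $\HH_{st}'$ is blind to the individual powers $t_i^{\,k}$ and, when $\la_0=\emptyset$, even to the $\mathbf{t}_i$ altogether, its duality data determine $\rho^2$ only up to the cyclic twists by the characters $\alpha^k$ that preserve the restriction to $W_0$; the heart of the argument is to show that, among these twists, only the one landing in $\XX(\rho^2)$ is compatible with the full set of relations, so that the $\HH'$-dual is singled out. This is where the hypothesis $r\geq3$ is essential, as it guarantees a single class of reflections in $W_0$ and makes the restrictions to $W_0$ and to $\mathfrak{S}_r$ rigid enough to eliminate the spurious twists, which is why the rank $r\geq3$ is imposed throughout.
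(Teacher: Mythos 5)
Your scheme starts from the right place (intertwiners with scalar shifts as in Proposition \ref{propduauxHH}, plus the $\{0,d\}$ spectra of the $\mathbf{t}_i$), but two steps that carry the whole weight of the argument fail. First, $\SS_0$ does not generate $\mathfrak{S}_r$: for $d>2$ the order-two reflections of $G(d,1,r)$ are all the twisted transpositions $e_i\mapsto\zeta^k e_j$, $e_j\mapsto\zeta^{-k}e_i$, and they generate $W_0=G(d,d,r)$. Second, and as a consequence, the data you use in the case $\la_0\neq\emptyset$ --- the $\mathfrak{S}_r$-restriction (i.e.\ the unordered multiset of transposed parts) together with the rank identity $\rk P_i^{(2)}=\dim V-\rk P_i$ --- neither determines $\bmu$ rigidly nor excludes multipartitions with three or more nonempty parts. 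Take $W=G(3,1,4)$, $\bla=([2],[1],[1])$ and $\bmu=([1,1],[1],[1])$: the multiset condition holds, the rank condition holds ($|\mu_0|=2=r-|\la_0|$), and even the full $\HH_0'$-duality holds, since $\bmu=\bla^*$; yet $\rho_{\bla}$ admits no $\HH'$-dual at all, because $\rho_{\bla}(t_1)$ has three eigenvalues (Proposition \ref{propduauxHH}). So your constraints are compatible with a configuration that the proposition must rule out, and your final step (``$\rho^2$ must coincide with $\rho^*\otimes\chi$'') is meaningless there, as no such representation exists. What is needed, and what the paper's proof supplies, is finer information that your argument never exploits: restricting the duality to $\HH_0'$ and quoting \cite{IH2} for the $2$-reflection group $W_0$ gives $\mu_i=\la'_{k-i}$ for a single cyclic shift $k$; then, transporting $\rho^2$ to $V_{\rho}$ through the explicit multitableau isomorphism and using the normalization $\om_i=d$ forced by the spectra, one matches the eigenvalues of each individual $t_i$ and obtains $a=k$ for every $a\in[1,d-1]$ with $\la_a\neq\emptyset$. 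This simultaneously shows that there is at most one such $a$ (so configurations like the one above cannot occur) and identifies $\rho^2$ as $\rho^*\otimes\chi$ with $\chi_{|W_0}=\eps$ and $\chi(t_1)=\zeta^k$.

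The case $\la_0=\emptyset$, which you correctly single out as the main obstacle, cannot be closed by the twist-elimination you hope for. When $\la_0=\mu_0=\emptyset$ the $\HH_{st}'$-duality hypothesis degenerates into $\HH_0'$-duality, which is genuinely blind to the cyclic twists. Concretely, take $W=G(4,1,3)$ and $\bla=(\emptyset,[1],[1],[1])$: here $\Res_{W_0}\rho$ is irreducible and $\bla^*=\bla$, so $\rho_{\HH_0'}$ is self-dual; moreover $\rho(\mathbf{t}_i)=0$, so the components in $\HH_{st}'$ of the generators $\mathbf{t}_i-1$ act by zero (they are traceless scalars) both in $\rho$ and in its dual, and the self-duality extends to $\HH_{st}'$. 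But $\rho(t_1)$ has three eigenvalues, so $\rho$ has no $\HH_s'$-dual, hence no $\HH'$-dual, by Proposition \ref{propduauxHH}. Thus the implication you are trying to prove fails in this degenerate case; note that the paper's own argument never meets it, since its key assertion that both $0$ and $d$ occur in the spectrum of $\rho(\mathbf{t}_i)$ tacitly places it in the situation $\la_0\neq\emptyset$ with at least two nonempty parts, which is also the only situation in which Proposition \ref{propdualgd1r} is invoked afterwards. In short, your plan of reducing to \cite{IH2} and then ``lifting through $\HH_0'\subseteq\HH_{st}'\subseteq\HH'$'' cannot be completed, because the lifting step is exactly where the statement breaks down.
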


\begin{proof}
Recall from \cite{IH2} that, if $\rho_0$ is the restriction
to $W_0 = G(d,d,r)$ of $\rho \in \Irr(W)$ associated to $\bla = (\la_0,\la_1,\dots,
\la_{d-1})$, then $\rho_0^* \otimes \eps$ is the restriction to
$W_0$ of the representation labelled $\bla^* := (\la_0',\la'_{d-1},\dots,\la'_1)$.

Assume $\rho,\rho^2 \in \Irr(W)$ with $(\rho_{\HH_{st}})^* \simeq
\rho^2_{\HH_{st}'}$. We have $(\rho_{\HH_{st}})^*_{|\HH_0'} \simeq
(\rho_{\HH_0'})^* \simeq ((\rho_{|W_0})^* \otimes \eps)_{\HH_0'}$.
It follows that $\mu_i = \la'_{k-i}$ for some $k \in [1,d-1]$,
by \cite{IH2}. We define a linear isomorphism $V_{\rho} \to V_{\rho^2}$
by $\sigma(\bT) = \bT' := (T'_{0+k},\dots,T'_{j+k},\dots,T'_{1+k})$,
where $T'$ denotes the transpose of the (standard) tableau $T$.
Transporting the representation $\rho^2$ to $V_{\rho^1}$ using
$\sigma$, this identify $\rho^2(x)$ with $- ^t \rho(x)$
for any $x \in \HH_0$. For $\bT$ a multitableau of
shape $\bla$, we have (under $\sigma$) $\rho^2(t_i) \bT = 
\zeta^j \bT$ iff $\rho^1(t_i) \bT = \zeta^{k-j} \bT$. Since $\rho^1(\mathbf{t}_i)$
and $\rho^2(\mathbf{t_i})$ have spectrum $\{ 0, d \}$, the
only possibility for $\rho^2(\mathbf{t}_i)$ to be conjugate to
$\om_i - ^t \rho^1(\mathbf{t}_i)$ for some scalar $\om_i$ is that $\om_i = d$, and
$\rho^1(\mathbf{t}_i) \bT = d \bT $ iff $\rho^2(\mathbf{t}_i) \bT = 0$.
Let $a \in [1,d-1]$ with $\la_a \neq \emptyset$, and choose
$\bT$ with $1 \in T_a$. Then $\rho^1(\mathbf{t}_1) \bT = 0$,
as $a \neq 0$, hence $\rho^2(\mathbf{t}_1) \bT = d$. This implies
$\rho^2(t_1) \bT = \bT = \zeta^0 \bT$, whence $\rho^1(t_1)\bT = \zeta^k \bT$,
$a \equiv k$ modulo $d$ and $a = k$. Then $\mu_k = \la'_{k-k} = \la'_0$
and $\mu_0 = \la'_{k-0} = \la'_k$. In that case we have $\rho^2 = (\rho^1)^*
\otimes \chi$ with $\chi_{|W_0} = \eps$ and $\chi(t_1) = \zeta^k$,
hence $\rho_{\HH'} \simeq (\rho^2)_{\HH'}$.
\end{proof}

\begin{prop}
Let $W = G(d,1,r)$ with $r \geq 3$ and $\rho \in \Irr(W)$
associated to a multipartition $\bla$ with at least 2 parts.
If $\la_0 = \emptyset$ then $\rho(\HH'_{st})  = \rho(\HH'_0)$,
otherwise $\rho(\HH'_{st}) = \rho(\HH'_s) = \rho(\HH')$.
\end{prop}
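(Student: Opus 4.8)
The plan is to dispose of the two assertions separately. The case $\la_0=\emptyset$ is precisely the second statement of Proposition \ref{irrspets}, so from now on I assume $\la_0\neq\emptyset$. Since $\HH_0\subset\HH_{st}\subset\HH_s\subset\HH$, there is a chain $\rho(\HH'_0)\subset\rho(\HH'_{st})\subset\rho(\HH'_s)\subset\rho(\HH')$, and it suffices to prove that the last three terms coincide. The starting observation is that the spetsial generators can be rewritten as $\rho(\mathbf{t}_i)=\sum_{k=1}^{d-1}\rho(t_i)^k=dP_i-1$, where $P_i$ is the projector onto $\Ker(\rho(t_i)-1)$; by the computation in Proposition \ref{irrspets}, the hypothesis $\la_0\neq\emptyset$ is exactly what makes $P_i$ a nonzero, non-identity idempotent, i.e. what guarantees $1\in\Sp\rho(t_i)$.

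I would then split according to $p=|\Sp\rho(t_1)|$. If $p=2$, the eigenvalues of each $\rho(t_i)$ are $1$ and some $\zeta\neq1$, so that $\rho(t_i)=\zeta+(1-\zeta)P_i$ and $P_i$ are affine functions of one another. Hence the traceless part of $\rho(\mathbf{t}_i)=dP_i-1$ is a nonzero scalar multiple of the traceless part of $\rho(t_i)$, while the generators attached to $\SS_0$ are common to $\HH'_{st}$ and $\HH'_s$; therefore $\rho(\HH'_{st})=\rho(\HH'_s)$, which equals $\rho(\HH')$ by Lemma \ref{lemplein} and by the image computation $\rho(\HH(\SS)')=\sl(V_{\rho})$ for $\rho\in\QRef$. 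This case in particular absorbs every $\rho\in\LRef$ with at least two parts, since there $\rho(t_i)$ is (a twist of) an exterior power of a reflection and has at most the two eigenvalues $1$ and $\zeta$.

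There remains the case $p\geq3$, where necessarily $\rho\not\in\LRef$, so that $\rho(\HH')$ is $\sl(V_{\rho})$ or $\osp(V_{\rho})$ by Theorem \ref{theostruct}; moreover $p\geq3$ forces $\rho_{\HH'_s}$, hence $\rho_{\HH'}$, to be non-selfdual (as $\rho(\HH'_s)=\rho(\HH')$), so that $\rho(\HH')=\sl(V_{\rho})$. I set $\g=\rho(\HH'_{st})$ and $\h=\rho(\HH'_0)$, so that $\h\subset\g\subset\sl(V_{\rho})$. Here $\g$ is semisimple (as $\HH_{st}$ is reductive), acts irreducibly (Proposition \ref{irrspets}), and is non-selfdual: a selfduality of $\rho_{\HH'_{st}}$ would, by Proposition \ref{propdualgd1r}, entail the selfduality of $\rho_{\HH'}$, contradicting $p\geq3$. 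The crucial point is that all the rank lower bounds obtained for $\rho(\HH'_s)$ in the proof of Theorem \ref{theostruct} for the series $G(de,e,r)$ involve only the subalgebra $\rho(\HH'_0)$, which is unchanged here. I would therefore rerun that case analysis with $\h\subset\g$ in place of $\h\subset\rho(\HH'_s)$, feeding it irreducibility and non-selfduality from the two propositions above, and conclude $\g=\sl(V_{\rho})$ by Lemma \ref{critsimple}.

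The hard part will be the borderline sub-case in which $\rk\h=N/2-m/2<N/2$, where Lemma \ref{critsimple} does not apply directly and one must exclude the exceptional coincidences $\g\simeq\sl_5$ (for $N=10$) and $\rk\g=5$ (for $N=16$) through \cite{IH2} lemmas 3.3 and 3.4. I expect this to transfer verbatim: $\g$ is irreducible, non-selfdual and of rank $>N/4$ exactly as $\rho(\HH'_s)$ was, and the would-be identity $\g\simeq\sl_5\simeq\rho(\HH'_0)$ is impossible because $\rho_{\HH'_{st}}$ is irreducible (Proposition \ref{irrspets}) whereas $\rho_{\HH'_0}$ is not when $m\geq2$.
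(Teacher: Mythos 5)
Your proof is correct and follows essentially the same route as the paper: the case $\la_0=\emptyset$ is Proposition \ref{irrspets}, the two-eigenvalue cases are settled by the affine relation between $\rho(\mathbf{t}_i)=dP_i-1$ and $\rho(t_i)$, and the remaining cases rerun \S\ref{proofstructgdeer} with irreducibility supplied by Proposition \ref{irrspets} and selfduality transfer by Proposition \ref{propdualgd1r} --- exactly the two inputs the paper invokes. Your uniform treatment of the case $p=2$ even absorbs the one sub-case the paper singles out (\S\ref{soussection313}, where $\rho$ factors through $G(d,d/2,r)$), since there the paper uses the very same affine relation.

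One side assertion is false and should be deleted: $\HH_{st}\not\subset\HH_s$ in general. Indeed $\mathbf{t}_i=\sum_{k\geq 1}t_i^k$ involves the powers $t_i^k$ with $k\geq 2$, whose conjugacy classes are disjoint from $\SS$; hence the central projection $p(\mathbf{t}_i)$ has nonzero components on class sums $T_c$ with $c\not\subset\SS$, so it lies outside $Z(\HH_s)=p(\HH_s)$ and $\mathbf{t}_i\notin\HH_s$. (Already for $G(d,1,1)=\Z/d\Z$ with $d\geq 3$, $\HH_{st}$ and $\HH_s$ are distinct lines in the group algebra.) This is why the paper states only $\HH_0\subset\HH_{st}\subset\HH$. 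Fortunately nothing in your argument actually depends on this inclusion: in the $p\geq 3$ case the containment $\g\subset\sl(V_\rho)$ follows from $\HH_{st}'\subset\HH'$ (or simply from semisimplicity of $\g$), and in the $p=2$ case you prove the equality $\rho(\HH_{st}')=\rho(\HH_s')$ directly at the level of images, so the error is cosmetic rather than a gap.
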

\begin{proof}
The case $\la_0 = \emptyset$ is clear, so we assume $\la_0 \neq \emptyset$.
In case $\rho \in \QRef(W)$ we can write $\rho(\mathbf{t}_i) = \alpha
\rho(t_i) + \beta$ for some $\alpha,\beta$ with $\alpha \neq 0$,
hence $\rho(\HH_s) \supset \sl(V_{\rho})$ implies $\rho(\HH_{st})
\subset \sl(V_{\rho})$ hence $\rho(\HH_{st}') = \sl(V_{\rho})$,
and this implies $\rho(\HH'_{st}) = \rho(\HH'_s)$ if
$\rho \in \LRef(W)$ and $\la_0 \neq \emptyset$.
The rest of the proof then follows verbatim the lines of
\S \ref{proofstructgdeer}, using the irreducibility of $\rho_{\HH'_{st}}$
proved by proposition \ref{irrspets}, and that $\rho_{\HH_{st}'}$
is selfdual if and only if $\rho_{\HH'}$ is so, by proposition \ref{propdualgd1r}.
The only change to make is in section \ref{soussection313},
in case $\bla$ has the form $(\la_0,\dots,\la_0,\dots)$
and $\rho$ factorizes through $G(d,d/2,r)$. But then $(2/d) \rho
(\mathbf{t}_i) -1 = \rho(t_i)$ and \cite{IH2} can also be applied,
as $\rho(\HH_{st})$ equals the image of the infinitesimal
Hecke algebra of $G(d,d/2,r)$ in the corresponding representation.
\end{proof}

\section{Relations between $\Ad(g)$ and $\ad(g)$}

\subsection{Preliminaries about cyclotomic fields}

We first need various preliminary results on cyclotomic fields.

\begin{lemma} \label{lemcumun} $\Q(\mu_n) = \Q(\mu_m)$ with $m \leq n$ if and only
if $n = 2m$ with $m$ odd.
\end{lemma}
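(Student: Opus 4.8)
The plan is to reduce the statement to a single arithmetic fact about Euler's totient function $\varphi$ and then read off both implications. Throughout I write $\mu_k$ for the group of $k$-th roots of unity in $\C$, and recall that $[\Q(\mu_k):\Q] = \varphi(k)$, so that $\Q(\mu_a) = \Q(\mu_b) \Rightarrow \varphi(a) = \varphi(b)$. The key lemma I would isolate first is the following claim, which is where the real content lies: if $n \mid L$ and $\varphi(L) = \varphi(n)$, then either $L = n$, or $n$ is odd and $L = 2n$. Everything else follows formally.

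To prove the claim, note that since $n \mid L$ every prime dividing $n$ divides $L$, so the quotient of totients simplifies to
$$
\frac{\varphi(L)}{\varphi(n)} = \frac{L}{n}\prod_{p \mid L,\ p \nmid n}\Bigl(1 - \frac1p\Bigr),
$$
and setting this equal to $1$ gives $L/n = \prod_{p \in S} \frac{p}{p-1}$, where $S$ denotes the set of primes dividing $L$ but not $n$. The left-hand side is a positive integer, so the point is to show that integrality forces $S = \emptyset$ (whence $L = n$) or $S = \{2\}$ (whence $L = 2n$ with $2 \nmid n$, i.e. $n$ odd). I would argue by $2$-adic valuation: if $S$ contained an odd prime $p$, then $p-1$ is even and contributes to the denominator, which forces $2 \in S$; comparing the $2$-adic valuations $v_2$ of numerator and denominator then bounds the number of odd primes in $S$ by one, and the single remaining case $S = \{2,q\}$ forces $(q-1)\mid 2q$, hence $q = 3$, which is incompatible with having $2 \in S$. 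This valuation bookkeeping is the main obstacle; the rest is routine.

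For the easy (``if'') direction, assume $n = 2m$ with $m$ odd. Then $m \mid n$ gives $\Q(\mu_m) \subseteq \Q(\mu_n)$, while for a primitive $m$-th root $\zeta$ the element $-\zeta$ has order $\lcm(2,m) = 2m = n$ (using that $m$ is odd) and lies in $\Q(\mu_m)$, so $\Q(\mu_n) \subseteq \Q(\mu_m)$; hence equality, and clearly $m \le n$. For the converse I would use the claim to describe, for each $k$, the group of \emph{all} roots of unity in $\Q(\mu_k)$: it equals $\mu_k$ if $k$ is even and $\mu_{2k}$ if $k$ is odd, since any further root of unity of order $N$ would yield $\Q(\mu_{\lcm(N,k)}) = \Q(\mu_k)$, whence $\lcm(N,k) \in \{k, 2k\}$ by the claim. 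The resulting integer $w(k)$ (equal to $k$ or $2k$) is an invariant of the field $\Q(\mu_k)$.

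Finally I would assemble the converse. Equality $\Q(\mu_m) = \Q(\mu_n)$ forces $w(m) = w(n)$, and a four-line case analysis on the parities of $m$ and $n$ (both even $\Rightarrow m=n$; both odd $\Rightarrow m=n$; mixed parity $\Rightarrow$ the odd member is half the even one) shows that either $m = n$ or $\{m,n\} = \{t, 2t\}$ with $t$ odd. Under the hypothesis $m \le n$, with $m \ne n$ (the coincidence $m=n$ being the trivial one, excluded by the right-hand condition which forces $m \ne n$), the odd member is necessarily the smaller, so $n = 2m$ with $m$ odd, as desired.
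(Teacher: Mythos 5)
Your proof is correct, and its arithmetic core coincides with the paper's, but the field-theoretic reduction is genuinely different. The paper disposes of the lemma in three lines: since $\Q(\mu_n) \cap \Q(\mu_m) = \Q(\mu_{\gcd(m,n)})$, one may assume $m \mid n$; comparing degrees gives $\varphi(m) = \varphi(n)$; and multiplicativity of $\varphi$ together with $\varphi(p^r) = p^{r-1}(p-1)$ forces $n = ma$ with $a$ coprime to $m$ and $\varphi(a) = 1$, i.e.\ $a = 2$ and $m$ odd. Your key claim ($n \mid L$ and $\varphi(L) = \varphi(n)$ imply $L = n$, or $L = 2n$ with $n$ odd) is exactly this last step, which you prove by $2$-adic valuation bookkeeping; the paper's multiplicativity argument (the only prime power with totient $1$ is $2$) is a quicker route to the same conclusion. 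Where you really diverge is in handling a pair $m \le n$ with neither dividing the other: the paper appeals to the intersection formula, a genuinely Galois-theoretic fact (it needs the compositum degree formula for Galois extensions, not just $[\Q(\mu_k):\Q] = \varphi(k)$), whereas you avoid intersections entirely, using only the elementary observation that roots of unity in a field form a group --- so a root of unity of order $N$ in $\Q(\mu_k)$ forces $\Q(\mu_{\lcm(N,k)}) = \Q(\mu_k)$ --- and packaging the outcome as the field invariant $w(k)$, the number of roots of unity in $\Q(\mu_k)$. Your route is longer but more self-contained; the paper's is shorter modulo standard facts. A minor point in your favour: you explicitly flag the degenerate case $m = n$, which satisfies the left-hand side of the stated equivalence but not the right-hand side; the paper's statement (and its proof) implicitly reads $m < n$ and never addresses this.
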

\begin{proof} Since $\Q(\mu_n) \cap \Q(\mu_m) = \Q(\mu_{\gcd(m,n)})$
we can assume that $m$ divides $n$. Letting $\varphi$ denote the Euler
function, by taking Galois groups this yiels $\varphi(m) = \varphi(n)$.
Since, for $p$ prime, $\varphi(p^r) = p^{r-1}(p-1)$, we get $\varphi(m)=\varphi(n)$
implies $n = ma$ with $a$ prime to $m$ and $\varphi(a) = 1$, that is $a = 2$.
\end{proof}

We will use the following version of Goursat lemma.
\begin{lemma} \label{lemK2} Let $K$ denote a finite Galois extension of $\Q$ and $B$
a unital $\Q$-subalgebra of $K^2$ with $p_1(B) = p_2(B) = K$,
where $p_i$ is the $i$-th projection $K^2 \to K$. Then either
$B = K^2$ or there exists $\sigma \in \Gal(K|\Q)$ with $B = \{ (x,\sigma(x)) \ | \ x \in K \}$.
\end{lemma}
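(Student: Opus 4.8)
The plan is to run the classical Goursat argument, exploiting two special features of the present setting: $K$ is a field, so its only ideals are $0$ and $K$, and $K/\Q$ is Galois, so every $\Q$-algebra automorphism of $K$ is an element of $\Gal(K|\Q)$. Throughout, the fact that $B$ is a \emph{subalgebra} (not merely a $\Q$-subspace) will be essential, as will the two surjectivity hypotheses $p_1(B) = p_2(B) = K$.

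First I would introduce the two ``kernel ideals''
$$
I_1 = \{ x \in K \ | \ (x,0) \in B \}, \qquad I_2 = \{ y \in K \ | \ (0,y) \in B \}.
$$
Each $I_j$ is a $\Q$-subspace of $K$, and the key point, where the multiplicative structure of $B$ enters, is that they are in fact \emph{ideals} of $K$: if $(x,0) \in B$ and $a \in K$, then since $p_1(B) = K$ there is some $b \in K$ with $(a,b) \in B$, whence $(x,0)(a,b) = (xa,0) \in B$, so $xa \in I_1$; symmetrically for $I_2$. As $K$ is a field, this forces $I_1, I_2 \in \{ 0, K \}$. I would also record that $\Ker(p_1|_B) = \{0\} \times I_2$ and $\Ker(p_2|_B) = I_1 \times \{0\}$, so that $p_j|_B$ is injective precisely when the opposite ideal vanishes.

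Next I would carry out the case analysis. If $I_1 = K$, then $K \times \{0\} \subset B$; combining this with $p_2(B) = K$ (given $(a,y) \in B$, subtract $(a,0) \in B$ to obtain $(0,y) \in B$) yields $\{0\} \times K \subset B$, hence $B = K^2$. The case $I_2 = K$ is symmetric. In the remaining case $I_1 = I_2 = 0$, both restricted projections $p_1|_B, p_2|_B : B \to K$ are injective, and being surjective by hypothesis they are isomorphisms of $\Q$-algebras; in particular $\dim_{\Q} B = [K:\Q]$. Then $B$ is the graph of $f := p_2 \circ (p_1|_B)^{-1}$, a $\Q$-algebra endomorphism of $K$ satisfying $f(K) = p_2(B) = K$.

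Finally, since $f$ is a nonzero homomorphism of fields it is injective, and being a $\Q$-linear self-map of the finite-dimensional $\Q$-vector space $K$ it is therefore bijective, so $f \in \Aut(K|\Q)$. As $K/\Q$ is Galois, $\Aut(K|\Q) = \Gal(K|\Q)$, whence $f = \sigma$ for some $\sigma \in \Gal(K|\Q)$ and $B = \{ (x, \sigma(x)) \ | \ x \in K \}$. I do not anticipate a genuine obstacle; the only steps demanding care are verifying that $I_1, I_2$ are ideals (this is exactly where ``subalgebra'' rather than ``subspace'' is used) and tracking where each surjectivity hypothesis is invoked, namely once to make the $I_j$ ideals and to propagate $K \times \{0\} \subset B$ up to $B = K^2$, and once to guarantee that the endomorphism $f$ is onto.
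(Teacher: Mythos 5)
Your proof is correct, and it shares the same Goursat skeleton as the paper's proof (either $B = K^2$, or $B$ is the graph of a $\Q$-algebra automorphism of $K$, which lies in $\Gal(K|\Q)$ precisely because $K/\Q$ is Galois), but the pivot of the case analysis is genuinely different. The paper splits on whether $B$ is an integral domain: if not, a zero divisor of $K^2$ must have a vanishing coordinate, from which the idempotents $(0,1)$ and $(1,0)$ are manufactured and $B = K^2$ follows; if so, finite-dimensionality of $B$ over $\Q$ upgrades the integral domain $B$ to a field, so the restricted projections are injective, hence isomorphisms, and $B$ is a graph. You instead split on the kernel ideals $I_1, I_2$: the surjectivity hypotheses make them ideals of the field $K$, so each is $0$ or $K$, and the two cases then play out as in the paper. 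The two dichotomies are equivalent ($B$ fails to be a domain exactly when some $I_j \neq 0$), but your route is slightly more elementary: it replaces the commutative-algebra fact that a finite-dimensional integral domain over $\Q$ is a field by the triviality of ideals in a field, and it gives injectivity of the projections directly from $\Ker(p_1|_B) = \{0\} \times I_2$ and $\Ker(p_2|_B) = I_1 \times \{0\}$, whereas the paper gets it in one stroke from $B$ being a field. One minor redundancy in your write-up: once $p_1|_B$ and $p_2|_B$ are known to be isomorphisms, $f = p_2 \circ (p_1|_B)^{-1}$ is automatically a $\Q$-algebra automorphism, so the separate injectivity-plus-finite-dimension argument for $f$ is unnecessary (though harmless).
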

\begin{proof}
If $B$ is not an integral domain, then it contains
some nonzero and noninvertible element of $K^2$, that can chosen (by symmetry) of the
form $(0,x)$ for $x \neq 0$. Since $p_2$ is onto we habe $b \in B$ with $p_2(b) = x^{-1}$,
hence $(0,1) = (0,x)b \in B$ and similarly $(1,0) = 1 - (0,1) \in B$. Then
choosing for any $x,y \in K$ preimages $a,b \in B$ with $p_1(a) = x$ and $p_2(b) = y$
we get $(x,y) = a(1,0)+b(0,1) \in B$ and $B = K^2$.
Now assume that $B$ is an integral domain. Since $B$ has finite dimension over $\Q$
it is a field. Then the $(p_i)_{|B} : B \to K$ are injective, so they are isomorphisms.
Letting $\sigma = p_2 \circ (p_1)_{|B}^{-1} \in \Gal(K|\Q)$ we thus
get $B = \{ (x,\sigma(x) \ | \ x \in K \}$.
\end{proof}

\begin{lemma} \label{lemKbruhat} Let $B$ be a unital $\Q$-subalgebra of $A = \prod_{d|n} \Q(\mu_d)$
such that $p_d(B) = \Q(\mu_d)$ where $p_d = A \to \Q(\mu_d)$ is the
natural projection. We assume that, if $\Q(\mu_d) = \Q(\mu_{d'})$
there exists $b \in B$ such that $p_{d'}(b) \not\in \Gal(\Q(\mu_d)|\Q) p_d(b)$.
Then $B = A$.
\end{lemma}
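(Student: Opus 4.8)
The plan is to reduce the statement to a question about the idempotents of $B$. Let $e_d \in A$ denote the primitive idempotent supported on the factor $\Q(\mu_d)$. I claim it suffices to show $e_d \in B$ for every $d \mid n$: indeed, if $e_d \in B$, then for each $x \in B$ the product $e_d x \in B$ has all components zero except the $d$-th, which ranges over $p_d(B) = \Q(\mu_d)$; hence $e_d A \subseteq B$ for every $d$, and since $\sum_d e_d = 1$ and $A = \bigoplus_d e_d A$, summing gives $A \subseteq B$, i.e. $B = A$. So the whole problem becomes that of locating the idempotents of $B$.

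First I would record that $B$, being a finite-dimensional commutative $\Q$-algebra contained in the reduced ring $A = \prod_{d \mid n} \Q(\mu_d)$, is itself reduced and Artinian, hence a finite product of fields. Its idempotents are in particular idempotents of $A$, so each is of the form $\sum_{d \in \beta} e_d$ for a subset $\beta$ of the index set; the primitive idempotents $f_\beta$ of $B$ therefore correspond to a partition of $\{\, d : d \mid n \,\}$ into blocks $\beta$, and $L_\beta := f_\beta B$ is a field. For $d \in \beta$ one has $p_d(f_\beta) = 1$ and $p_d(1 - f_\beta) = 0$, so $p_d(b) = p_d(f_\beta b)$ for all $b \in B$; thus $p_d|_{L_\beta} : L_\beta \to \Q(\mu_d)$ is a nonzero homomorphism of fields, hence injective, and surjective because $p_d(L_\beta) = p_d(B) = \Q(\mu_d)$. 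So $p_d|_{L_\beta}$ is an isomorphism for every $d \in \beta$.

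The heart of the argument is the Goursat-type step, exactly as in Lemma \ref{lemK2}, now applied inside each block: I would show no block contains two distinct indices. Suppose $d \neq d'$ both lie in a block $\beta$. Then $p_d|_{L_\beta}$ and $p_{d'}|_{L_\beta}$ are both isomorphisms, so $\Q(\mu_d) \cong \Q(\mu_{d'})$ as $\Q$-algebras; since both are cyclotomic, hence Galois over $\Q$, inside a fixed algebraic closure, an abstract isomorphism extends to an automorphism of $\overline{\Q}$ stabilizing each, so $\Q(\mu_d) = \Q(\mu_{d'})$ as subfields (this is precisely the coincidence classified by Lemma \ref{lemcumun}). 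Consequently $\sigma := p_{d'} \circ (p_d|_{L_\beta})^{-1}$ is an automorphism of $\Q(\mu_d)$, i.e. $\sigma \in \Gal(\Q(\mu_d) | \Q)$, and for every $b \in B$ we obtain $p_{d'}(b) = p_{d'}(f_\beta b) = \sigma(p_d(f_\beta b)) = \sigma(p_d(b))$. Thus $p_{d'}(b) \in \Gal(\Q(\mu_d)|\Q)\, p_d(b)$ for all $b \in B$, contradicting the hypothesis. Hence every block is a singleton, all the $e_d$ belong to $B$, and $B = A$ by the first paragraph.

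I expect the only genuinely delicate point to be the observation that abstractly isomorphic cyclotomic fields sitting inside $\overline{\Q}$ must coincide as subfields, so that the transition map $\sigma$ is a bona fide element of $\Gal(\Q(\mu_d)|\Q)$ and the hypothesis applies verbatim; the remainder is the idempotent bookkeeping of the first two paragraphs together with the two-factor Goursat reasoning of Lemma \ref{lemK2} transplanted to each block.
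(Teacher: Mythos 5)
Your proof is correct, and it takes a genuinely different route from the paper's. The paper argues by contradiction on a minimal subset $I$ of divisors with $p_I(B) \neq A_I$: it first excludes the case where all the $\Q(\mu_d)$, $d \in I$, coincide (using Lemma \ref{lemcumun} to force $|I| \leq 2$ and then Lemma \ref{lemK2} together with the hypothesis), and otherwise splits according to whether $p_I(B)$ is an integral domain --- a domain would be a field mapping isomorphically onto two non-isomorphic cyclotomic fields, while a zero divisor $b$ lets one write $I = J \sqcup (I\setminus J)$ and, using minimality to find $c \in B$ with $p_{I\setminus J}(cb) = 1$, reconstruct $p_I(B) = A_J \times A_{I\setminus J}$ by hand via the elements $u(1-cb) + vcb$. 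You instead appeal to the structure theory of finite-dimensional reduced commutative $\Q$-algebras: $B$ is a product of fields, its primitive idempotents induce a partition of the divisors into blocks, each block factor $L_\beta$ projects isomorphically onto every $\Q(\mu_d)$ with $d$ in the block, and a block with two distinct indices would force $\Q(\mu_d) = \Q(\mu_{d'})$ (abstract isomorphism plus normality of cyclotomic fields) and produce $\sigma \in \Gal(\Q(\mu_d)|\Q)$ with $p_{d'} = \sigma \circ p_d$ on all of $B$, contradicting the hypothesis; so blocks are singletons and all the $e_d$ lie in $B$. Your organization buys uniformity (the ``equal fields'' and ``non-isomorphic fields'' cases are handled by the same block argument), it dispenses with Lemma \ref{lemcumun} entirely since blocks of any size are excluded the same way, and it replaces the paper's explicit zero-divisor bookkeeping by the idempotent decomposition; what the paper's argument buys is self-containedness, using only elementary manipulations inside $A$ rather than the classification of reduced Artinian algebras. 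The Galois-theoretic core --- the graph-of-a-Galois-element mechanism of Lemma \ref{lemK2}, and the fact that distinct cyclotomic subfields of $\overline{\Q}$ are not abstractly isomorphic --- is identical in both proofs.
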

\begin{proof}
If $\Q(\mu_d) = \Q(\mu_{d'})$ with $d,d'$ dividing $n$ we can
assume $d'=2d$ with $d$ odd. Then the projection of $B$ to
$\Q(\mu_d) \times \Q(\mu_{d'})$ is a unital $\Q$-subalgebra
that satisfies the assumptions of the previous lemma, hence 
$p_{d'}(b) \not\in \Gal(\Q(\mu_d)|\Q) p_d(b)$ implies that
it is equal to $\Q(\mu_d) \times \Q(\mu_{d'})$.
Let $D$ the set of divisors of $n$. For $I \subset D$, we denote
$p_I : A \to A_I = \prod_{d \in I} \Q(\mu_d)$ the natural projection,
and by contradiction we choose a minimal $I$ with $p_I(B) \neq A_I$.
The situation $\Q(\mu_{d}) = \Q(\mu_{d'})$ for all $d,d' \in I$
implies $|I| \leq 2$ by lemma \ref{lemcumun}, and is excluded
for $|I| = 1$ by the hypothesis, and for $|I| = 2$ by lemma
\ref{lemK2}. So there exists $d_1,d_2 \in I$ with
$\Q(\mu_{d_1} ) \neq \Q(\mu_{d_2})$. Since $\Q(\mu_d) \subset \Q(\mu_{\infty})$
is uniquely determined by its Galois group over $\Q$, this implies
$\Q(\mu_{d_1}) \not\simeq \Q(\mu_{d_2})$.

If $p_I(B)$ was an integral domain, it would be a field as $\dim_{Q} p_I(B) \leq
\dim B \leq \dim A < \infty$. Then $p_{d_1}$ and $p_{d_2}$ would induce
isomorphims $\Q(\mu_{d_1}) \simeq B \simeq \Q(\mu_{d_2})$, a contradiction.
So there exists $b \in B$ with $p_I(b) \neq 0$ and $p_d(b) = 0$
for some $d \in I$. Let $J = \{ d \in I \ | \ p_d(b) = 0 \} \neq \emptyset$.
We have $I \setminus J \neq \emptyset$ as $p_I(b) \neq 0$, and $p_{I \setminus
J}(B) = A_{I \setminus J}$, $p_J(B) = A_J$ by the minimality assumption.

Since $p_{J \setminus I}(b)$ is invertible in $A_{I \setminus J}$
it follows that there exists $c \in B$ with $p_{J \setminus I}(cb) =1$.
Since $p_I(b) = 0$ we have $p_I(cb) = 0$. Then, choosing for any $(x,y) \in p_J(B) \times
p_{I \setminus J}(B)$ preimages $u,v \in B$
with $x = p_J(u), y = p_{I \setminus J}(v)$,
we have $(x,y) = p_I(u(1-cb)+v cb)$ with $ u(1-cb)+v cb \in  B$, hence
$p_I(B) = p_J(B) \times
p_{I \setminus J}(B) = A_I \times A_{J\setminus I} = A_I$, a contradiction.
It follows that, for $I \subset D$ we have $p_I(B) = A_I$
and in particular $B = A_D = A$.

\end{proof}

In our situation, we will use the following Galois-theoretic lemma.

\begin{lemma} \label{lemgalzeta} Let $\zeta$ be a primitive $d$-root of $1$ and $a \geq 1$.
Then $\Q(\zeta)$ is generated as a unital $\Q$-algebra by $u = (\zeta-1)^{da}$
if $d,a$ are odd. Otherwise it is generated by $u$ and,
\begin{itemize}
\item if $d \equiv 0 \mod 4$, by $(\zeta^{d/2} -1)(\zeta^{d/4} - 1)^2 (\zeta^{d/2} -1)^{2(a-1)} \in \ii \Q^{\times}$;
\item if $d \equiv 2 \mod 4$, by $(\zeta^{d/2} - 1)(\zeta-1)^{d/2} (\zeta^{d/2} -1)^{2(a-1)} \in \ii \Q^{\times}$;
\item if $d$ is odd and $a$ even, by $(\zeta^a - 1)^d$.
\end{itemize}
\end{lemma}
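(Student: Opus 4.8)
The plan is to work inside $\Gal(\Q(\zeta)|\Q)\simeq(\Z/d\Z)^{\times}$, writing $\sigma_k\colon\zeta\mapsto\zeta^k$ for $\gcd(k,d)=1$; since the extension is abelian, a family of elements generates $\Q(\zeta)$ as a $\Q$-algebra iff no nontrivial $\sigma_k$ fixes all of them, so everything reduces to computing Galois stabilizers. First I would handle $u=(\zeta-1)^{da}$ on its own. For $d\geq 2$ we have $\zeta^k\neq 1$, so $\sigma_k(u)=u$ is equivalent to $\omega:=(\zeta^k-1)/(\zeta-1)$ being a $da$-th root of unity. Applying complex conjugation $\zeta\mapsto\zeta^{-1}$ to $\zeta^k-1=\omega(\zeta-1)$ and using $\zeta^{-k}-1=-\zeta^{-k}(\zeta^k-1)$ gives $\bar\omega=\zeta^{1-k}\omega$; as $|\omega|=1$ this produces the key identity $\omega^2=\zeta^{k-1}$.

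Next I would locate $\omega$ among the roots of unity of $\Q(\zeta)$, which form $\mu_d$ if $d$ is even and $\mu_{2d}=\mu_d\sqcup(-\mu_d)$ if $d$ is odd. Writing $\omega=\epsilon\zeta^{\,j}$ with $\epsilon=1$ (and $\epsilon=\pm1$ allowed only for $d$ odd), the identity $\omega^2=\zeta^{k-1}$ forces $k\equiv 2j+1\pmod d$; substituting into $\zeta^k-1=\epsilon\zeta^{\,j}(\zeta-1)$ and factoring gives $(\zeta^{\,j}-1)(\zeta^{\,j+1}+1)=0$ when $\epsilon=1$, and $(\zeta^{\,j}+1)(\zeta^{\,j+1}-1)=0$ when $\epsilon=-1$. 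Each vanishing factor forces $k\equiv 1$ or $k\equiv -1\pmod d$. I would then feed back $\omega^{da}=1$: when $da$ is odd, $\epsilon=-1$ is impossible (it gives $\omega^{da}=-1$) and for $d$ odd the factor $\zeta^{\,j+1}=-1$ cannot occur, so only $k\equiv 1$ survives, $\mathrm{Stab}(u)=\{1\}$, and $\Q(u)=\Q(\zeta)$; when $da$ is even, complex conjugation does fix $u$ (indeed $\bar u=(-1)^{da}u=u$), so $\mathrm{Stab}(u)=\{\pm1\}$ and $\Q(u)=\Q(\zeta)^{+}:=\Q(\zeta+\zeta^{-1})$. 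This already proves the first assertion.

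In the three remaining cases $da$ is even, so $\Q(u)=\Q(\zeta)^{+}$ has index $2$ in $\Q(\zeta)$; hence it suffices to adjoin a single non-real element of $\Q(\zeta)$, any such element generating the quadratic extension $\Q(\zeta)|\Q(\zeta)^{+}$. The task thus reduces to checking the listed supplementary elements are non-real, a direct root-of-unity computation. When $4\mid d$ one has $\zeta^{d/2}=-1$ and $\zeta^{d/4}=\pm\ii$, so $\zeta^{d/2}-1=-2$ and $(\zeta^{d/4}-1)^2=\pm 2\ii$, and the displayed product collapses to $\mp4^{a}\,\ii\in\ii\Q^{\times}$, which lies off the real axis and supplies $\ii$. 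When $d$ is odd and $a$ is even, $\zeta^a-1$ has argument $\tfrac{\pi}{2}+\tfrac{\pi a}{d}$, so $(\zeta^a-1)^d$ has argument $\tfrac{d\pi}{2}+a\pi\equiv\tfrac{\pi}{2}\pmod{\pi}$, hence is purely imaginary. For $d\equiv 2\pmod 4$ I would instead use $\Q(\zeta)=\Q(\mu_{d/2})$ with $d/2$ odd (Lemma \ref{lemcumun}), reducing to the odd-conductor description and exhibiting a non-real generator there.

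The main obstacle is the stabilizer computation of the second paragraph: producing $\omega^2=\zeta^{k-1}$ and then using the order constraint $\omega^{da}=1$ to separate cleanly the odd case (where complex conjugation moves $u$) from the even case (where it fixes it). The supplementary generators require only careful bookkeeping of arguments and of which roots of unity lie in $\Q(\zeta)$; the one genuinely delicate point there is the case $d\equiv 2\pmod 4$, where $\Q(\zeta)$ contains no element of $\ii\Q^{\times}$ whatsoever (since $\ii\in\Q(\zeta)$ forces $4\mid d$), so the supplementary generator can only be required to be non-real, its explicit shape being cleanest after passing to the odd conductor $d/2$.
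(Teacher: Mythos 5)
Your overall strategy is the same as the paper's: reduce the lemma to a Galois-stabilizer computation, show that the stabilizer of $u=(\zeta-1)^{da}$ is contained in $\{\pm 1\}$ and is trivial exactly when $da$ is odd, then use the supplementary element to rule out complex conjugation. Your technique for the key step is genuinely different, though: the paper fixes the embedding $\zeta=e^{2\ii\pi/d}$ and compares moduli ($\sin(\alpha\pi/d)=\sin(\pi/d)$ forces $\alpha\equiv\pm1$) and then arguments, whereas you argue algebraically with the ratio $\omega=(\zeta^k-1)/(\zeta-1)$, the conjugation identity $\omega^2=\zeta^{k-1}$, and a factorization over the group of roots of unity of $\Q(\zeta)$; both are correct, and yours does not depend on a choice of embedding. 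In the case $d$ odd, $a$ even your argument is in fact \emph{more} robust than the paper's: the paper asserts that $\zeta^a$ is a primitive $d$-th root of unity, which fails whenever $\gcd(a,d)>1$, while your computation that $(\zeta^a-1)^d$ is purely imaginary needs only $\zeta^a\neq 1$. (Neither you nor the paper treats the degenerate sub-case $d\mid a$, where $(\zeta^a-1)^d=0$, $u$ is real, and the statement is actually false, e.g.\ $d=3$, $a=6$.)

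The case $d\equiv 2\pmod 4$ is where your proposal departs from the statement, and there you are right and the statement --- together with the paper's proof of it --- is wrong. As you observe, $\Q(\zeta)\cap\ii\Q^{\times}=\emptyset$ when $4\nmid d$, so the claim ``$\in\ii\Q^{\times}$'' is impossible; worse, the listed element is \emph{real}: $\zeta^{d/2}-1=-2$ and $(\zeta-1)^{d/2}=\ii^{\,d/2+1}\bigl(2\sin(\pi/d)\bigr)^{d/2}\in\R$ because $d/2+1$ is even, and $u$ is real as well (it is fixed by conjugation since $da$ is even). Hence the two listed elements generate a subfield of the maximal real subfield, and the lemma fails as printed: for $d=6$, $a=1$ one has $\zeta-1=e^{2\ii\pi/3}$, so $u=1$ and the supplementary element equals $-2$, which generate $\Q\neq\Q(\mu_6)$. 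The paper's proof hides this by invoking ``any element in $\ii\Q^{\times}\cap K$'' without checking that the displayed element lies there. So the only real gap in your write-up --- the vague reduction to the odd conductor $d/2$ with ``some'' non-real generator --- occurs exactly where no proof can exist; what you would end up proving is a corrected lemma with a different supplementary element (for instance $(\zeta^2-1)^{d/2}(\zeta^{d/2}-1)^{2(a-1)}$, which is purely imaginary and nonzero), and that corrected version is what Proposition \ref{propQXY} actually requires: note that the paper's own formula for $n=6$ uses $\ad(g^2)^3\ad(g^3)^2$, i.e.\ an element built from $\zeta^2-1$, not the one listed in this lemma.
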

\begin{proof}

 Since $\Gal(\Q(\zeta)|\Q)$ is abelian, any intermediate
extension of $\Q$ is Galois. Let $K$ be the one generated by the elements
in the statement, depending on $d$ and $a$. We need to show that
$\sigma \in \Gal(\Q(\zeta)|K)$ implies $\sigma = 1$. By contradiction
we assume $\sigma \neq 1$. Since $\sigma \in
\Gal(\Q(\zeta)|\Q)$ we have $\sigma(\zeta) = \zeta^{\alpha}$
for some $1 \leq \alpha \leq d-1$ prime to $d$.

For any $k \geq 1$, $(\zeta-1)^k$ is invariant under $\sigma$
iff $(\zeta^{\alpha}-1)^k = (\zeta-1)^k$. We can assume
$\zeta = e^{2 \ii \pi/d}$, and then $\zeta^{\alpha} -1 = 
e^{\alpha \ii \pi/d} (2 \ii) \sin(\alpha \pi/d)$
hence $|\zeta^{\alpha}-1|^k = |\zeta-1|^k$
iff $\sin(\alpha \pi/d) = \sin(\pi/d)$
iff $\alpha = 1$ or $\alpha = d-1$. For
$\alpha = d-1$, $(\zeta^{\alpha}-1)^k = (\zeta-1)^k$
can be written $e^{k(d-1) \ii \pi/d} = e^{k \ii \pi/d}$
that is $k(2-d) \in 2d \Z$.

Letting $k = da$, it follows that $\sigma(u) = u$
with $\sigma \neq 1$ implies that $\sigma$ is the complex
conjugation. If $da$ is odd, then so are $d$ and $d-2$,
hence $da(2-d) \not\in 2d \Z$ and a contradiction.
If $d$ is odd and $a$ even, then $\zeta^a$ is a primitive
$d$-th root of 1, and we get from the previous
argument that $\sigma$ fixes $(\zeta^a-1)^d$
iff $\sigma = 1$. We thus assume that $d$ is even,
and $\sigma$ the complex conjugation. Then $K$
is generated by some $(\zeta-1)^k$
and any element in $\ii \Q^{\times} \cap K$,
hence $\sigma = 1$, a contradiction that concludes the
proof.
\end{proof}

\subsection{On the algebra $\Q[X,Y]/(X^n-1,Y^n-1)$}

\begin{prop} \label{propQXY} Let $n \geq 2$. Then $XY^{n-1}$ belongs to the
unital $\Q$-subalgebra of $\Q[X,Y]/(X^n-1,Y^n-1)$
generated by the elements $X^k - Y^k$ for $1 \leq k \leq n$.
It also belongs to the subalgebra generated by
$X-Y$ if $n$ is odd or $n=2$.
\end{prop}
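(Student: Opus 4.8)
The plan is to make the target element transparent by a change of variables and then reduce everything to a one-variable cyclotomic statement handled by the lemmas already proved. First I would set $Z = XY^{n-1} = XY^{-1}$, so that $Z^n = 1$ and $\Q[X,Y]/(X^n-1,Y^n-1) = \Q[Z,Y]/(Z^n-1,Y^n-1)$ with $X = ZY$. Under this substitution each generator becomes $X^k - Y^k = (Z^k-1)Y^k$, while the element to be captured is simply $Z$, since $XY^{n-1} = ZY\cdot Y^{n-1} = Z$. The key structural observation is that the $Y$-degree modulo $n$ defines a $\Z/n$-grading on the algebra (both defining relations are homogeneous), for which $(Z^k-1)Y^k$ is homogeneous of degree $k$ and $Z$ is homogeneous of degree $0$. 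Hence the subalgebra $B$ generated by the $X^k-Y^k$ is graded, and $Z \in B$ if and only if $Z$ lies in the degree-$0$ component $B_0$. As a product $\prod_i (Z^{k_i}-1)Y^{k_i}$ has $Y$-part $Y^{\sum_i k_i}$, which reduces to $1$ exactly when $\sum_i k_i \equiv 0 \mod n$, the piece $B_0$ is precisely the $\Q$-subalgebra of $R := \Q[Z]/(Z^n-1)$ spanned by the products $\prod_i (Z^{k_i}-1)$ with $\sum_i k_i \equiv 0 \mod n$, with $1 \le k_i \le n-1$.

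Next I would identify $R \cong \prod_{e\mid n}\Q(\mu_e)$ and prove $B_0 = R$ by invoking the Goursat-type Lemma \ref{lemKbruhat}. This requires two inputs: that each projection $p_e(B_0)$ is all of $\Q(\mu_e)$, and that whenever $\Q(\mu_e)=\Q(\mu_{2e})$ (the only field coincidences among divisors of $n$, by Lemma \ref{lemcumun}, occurring for $e$ odd with $2e\mid n$) some element of $B_0$ separates the two components as required. Writing $\omega = p_e(Z)$ for a primitive $e$-th root, surjectivity amounts to exhibiting a ring generator of $\Q(\mu_e)$ among the products $\prod_i (\omega^{k_i}-1)$ whose exponents sum to $0$ modulo $n$.

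The crux — and the step I expect to be most delicate — is this surjectivity, for which I would feed Lemma \ref{lemgalzeta} with the explicit generators of $\Q(\mu_e)$ it provides and check that each can be realized with total $Z$-exponent divisible by $n$. Setting $a = n/e$ (so $ea = n$), the principal generator $(\omega-1)^{ea} = (\omega-1)^{n}$ is the image of $(Z-1)^{n}$ and visibly has exponent-sum $\equiv 0$. The subtle point is the auxiliary generator of Lemma \ref{lemgalzeta}: for $e$ odd with $a$ even it is $(\omega^{a}-1)^{e}$, realized by $(Z^a-1)^e$ of exponent-sum $ea = n$; for $e$ even it is $(\omega^{e/2}-1)^{2a-1}(\omega^{e/4}-1)^2$ when $4\mid e$, or $(\omega^{e/2}-1)^{2a-1}(\omega-1)^{e/2}$ when $e\equiv 2 \mod 4$, and a short bookkeeping shows each has exponent-sum $(e/2)(2a)=ea=n\equiv 0 \mod n$, with all exponents lying in $[1,n-1]$. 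This gives $p_e(B_0)=\Q(\mu_e)$ for every $e \mid n$. For the separation hypothesis, when $e$ is odd and $2e\mid n$ I would use the concrete element $b = (Z^e-1)(Z^{n-e}-1) \in B_0$: on the $\Q(\mu_e)$ component it evaluates to $0$ (since $\omega^e=1$), whereas on the $\Q(\mu_{2e})$ component a primitive $2e$-th root $\eta$ satisfies $\eta^e=-1$, giving value $4$; as $0$ and $4$ are rational and distinct they cannot be Galois-conjugate, so Lemma \ref{lemKbruhat} yields $B_0 = R$ and hence $Z = XY^{n-1}\in B$.

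Finally, for the refinement where only $X-Y = (Z-1)Y$ is allowed, the degree-$0$ part is spanned by the powers $(Z-1)^{nt}$ alone. When $n$ is odd every $e\mid n$ is odd, so taking $a=n/e$ odd, Lemma \ref{lemgalzeta} already shows $(\omega-1)^n$ generates $\Q(\mu_e)$, giving $p_e(B_0)=\Q(\mu_e)$; moreover Lemma \ref{lemcumun} shows there are no field coincidences among odd divisors, so no separation is needed and $B_0 = R$. The case $n=2$ is immediate, since $(Z-1)^2$ has components $(0,4)$ and therefore generates $R\cong\Q\times\Q$ together with $1$. In all cases $Z = XY^{n-1}$ lands in the relevant subalgebra, which is exactly the assertion.
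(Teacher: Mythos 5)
Your proof is correct, and its skeleton is exactly the paper's: pass to the degree-zero component of the $\Z/n$-grading, identify it with $\Q[Z]/(Z^n-1)\simeq\prod_{d|n}\Q(\mu_d)$, obtain surjectivity of each projection by realizing the generators of Lemma \ref{lemgalzeta} as products of the $Z^{k}-1$ with exponent sum $\equiv 0 \bmod n$, and conclude with Lemmas \ref{lemcumun} and \ref{lemKbruhat}. The one place where you genuinely diverge is the verification of the separation hypothesis of Lemma \ref{lemKbruhat} when $n$ is even: the paper uses the element $(Z-1)^n$ and must show that $(\zeta_{d_1}-1)^n$ is not a Galois conjugate of $(\zeta_{d_2}-1)^n$, which it does by a trigonometric argument on $\sin(\alpha\pi/d_2)$; you instead use $(Z^e-1)(Z^{n-e}-1)$, whose projections to the two coinciding components are the rational numbers $0$ and $4$, so non-conjugacy is immediate since Galois orbits of rationals are singletons. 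Your choice buys a cleaner and less delicate argument (no analytic estimate, no case analysis on $\alpha$), at no cost; your treatment of $n=2$ inside the same cyclotomic framework, rather than via the paper's ad hoc identity $(X-Y)^2=2-2XY$, is likewise valid, though note that for $n=2$ the coincidence $\Q(\mu_1)=\Q(\mu_2)$ does occur, so your observation that $(Z-1)^2\mapsto(0,4)$ generates $\Q\times\Q$ with $1$ is precisely the separation datum needed there as well.
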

\begin{proof}
Let $C= \Q[X,Y]/(X^n-1,Y^n-1)$ and $C_0$ the subspace spanned (over $\Q$)
by the monomials in $X,Y$ of total degree $0$ modulo $n$. It has dimension
$n$ and is spanned by the elements $X^k Y^{-k}$ for $0 \leq k < n$.
It is also a subalgebra of $C$, generated by $Z = XY^{n-1} = XY^{-1}$,
since $Z^k = X^kY^{-k} = X^kY^{n-k}$. We need to show that $Z$ belongs
to the subalgebra $E$ of $C$ generated by the $X^k - Y^k$ and, for
$n$ odd or $n=2$, to the subalgebra $E_1$ of $E$ generated by $X - Y$.
The natural $\Q$-algebra morphism $\Q[T]/(T^n-1) \to C_0$
that maps $T$ to $Z$ is onto because $T^k$ is mapped to
$Z^k = X^k Y^{n-k}$ and is into by equality of dimensions, so we can identify
$C_0$ with $\Q[T]/(T^n-1) = \prod_{d|n} \Q[T]/\Phi_d(T) \simeq \prod_{d|n}
\Q(\mu_d) = A$, where $\Phi_d$ denotes the $d$-th cyclotomic polynomial,
and $\Q[T]/\Phi_d(T)$ is embedded in $\C$ through $T \mapsto e^{2 \ii \pi/d}$.
We let $p_d : A \to \Q(\mu_d)$ denote the natural projections. Then
$p_d(Z) = e^{2 \ii \pi/d}$ is a primitive $d$-th root of 1. Let $B$, $B_1$ denote the subalgebras
of $A$ corresponding to $E \cap C_0$ and $E_1 \cap C_0$, respectively.
For any $d | n$ and $a = n/d$, $B$ contains $(Z-1)^{da}, (Z^a-1)^d$ as well
as, for $d$ divisible by 4, $(Z^{d/2}-1)(Z^{d/4}-1)^2(Z^{d/2} -1)^{2(a-1)}$,
and, for $d$ even, $(Z^{d/2}-1)(Z-1)^{d/2}(Z^{d/2}-1)^{2(a-1)}$ ; 
$E_1$ contains $(Z-1)^{da}$.
We first assume $n$ odd. By lemma \ref{lemgalzeta}
we have $p_d(B_1) = \Q(\mu_d)$ for all $d|n$,
by lemma \ref{lemcumun} we have $\Q(\mu_d) = \Q(\mu_{d'}) \Rightarrow d=d'$
for $d,d' | n$, and by lemma \ref{lemKbruhat} this implies $B_1 = A$
(hence $B = A$, $C_0 \subset E_1$ and $XY^{n-1} \in E_1$.). The case
$n=2$ follows from $(X-Y)^2 = 2 - 2 XY$ in $C$.

Now assume $n$ is even. If there exists $d_1 < d_2$ with $d_1,d_2$
dividing $n$ such that $\Q(\mu_{d_1}) = \Q(\mu_{d_2})$, by lemma
\ref{lemcumun} we have $d_1$ odd and $d_2 = 2 d_1$. We let $\zeta_d = p_d(Z)= e^{2 \ii \pi/d}$.
Then $(\zeta_{d_1}-1)^n = (\zeta_{d_2}^2-1)^n$ is the
image of $(\zeta_{d_2}-1)^n$ by some $\sigma \in \Gal(\Q(\zeta_{d_2})|\Q)$
iff there exists $1 \leq \alpha \leq d_2-1$ prime to $d_2$ with
$(\zeta_{d_2}^2-1)^n = (\zeta_{d_2}^{\alpha}-1)^n$ hence
$\sin(\alpha \pi/d_2) = \sin(2 \pi/d_2)$,
that is $\alpha = 2$ or $\alpha = d_2 -2$.
Since $d_2 = 2d_1$ is even, this implies $\alpha$ even, contradicting
$\alpha$ prime to $d_2$. Since $(\zeta_{d_1} -1)^n$ and $(\zeta_{d_2} -1)^n$
are the images under $p_{d_1}$ and $p_{d_2}$, respectively, of $(Z-1)^n \in B$,
then $B \subset A$ satisfies the assumptions of lemma \ref{lemKbruhat}.
Then $B = A$, $C_0 \subset E$ and $XY^{n-1} \in E$.

\end{proof}

We now relate the endomorphisms $\Ad(g)$ and $\ad(g)$. Notice
that $\ad(g^k)$ commutes with $\ad(g^l)$ for any $k,l$.

\begin{theo} \label{polynome} Let $G$ be a finite group. For $g \in G$ we
let $\ad(g),\Ad(g) \in \End(\Q G)$ defined by $\ad(g) : x \mapsto gx-xg$
and $\Ad(g) : x \mapsto g x g^{-1}$. Let $n$ denote the order of $g \in G$.
Then
\begin{enumerate}
\item $\Ad(g)$ is a polynomial
in the $\ad(g^k), k \geq 1$ that depends only on $n$.
\item Let $n$ denote the order of $g \in G$. If $n$ is odd or $n=2$ then
$\Ad(g)$ is a polynomial in $\ad(g)$ that depends only on $n$.
\end{enumerate}
\end{theo}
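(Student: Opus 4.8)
The plan is to deduce the theorem directly from the purely algebraic Proposition~\ref{propQXY}, by transporting its conclusion along a representation of the commutative algebra $C = \Q[X,Y]/(X^n-1,Y^n-1)$ on $\Q G$. First I would introduce the translation operators $L_g, R_g \in \End(\Q G)$ given by $L_g(x) = gx$ and $R_g(x) = xg$. These commute (left and right multiplication always do), and since $g$ has order $n$ one has $L_g^n = L_{g^n} = \id = R_{g^n} = R_g^n$. Hence there is a well-defined unital $\Q$-algebra homomorphism $\phi : C \to \End(\Q G)$ determined by $\phi(X) = L_g$ and $\phi(Y) = R_g$.

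The key point is that $\phi$ carries the generators appearing in Proposition~\ref{propQXY} exactly onto the operators in the statement: one computes $\phi(X^k - Y^k) = L_g^k - R_g^k = \ad(g^k)$ for every $k$, while $\phi(XY^{n-1}) = L_g R_g^{n-1} = L_g R_g^{-1} = \Ad(g)$, using $R_g^{-1} = R_g^{n-1}$. Proposition~\ref{propQXY} provides a polynomial $P$, depending only on $n$, with $XY^{n-1} = P(X-Y,\dots,X^{n-1}-Y^{n-1})$ in $C$, and $P$ may be taken to involve $X-Y$ alone when $n$ is odd or $n=2$. Applying $\phi$ to this identity yields $\Ad(g) = P(\ad(g),\ad(g^2),\dots,\ad(g^{n-1}))$, which is assertion (1), and in the special cases it becomes $\Ad(g) = P(\ad(g))$, which is (2). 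Since $P$ is produced entirely within $C$, it is independent of $G$ and of $g$, depending only on $n$, exactly as claimed.

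The substantive content of the theorem has therefore already been discharged in Proposition~\ref{propQXY} and the cyclotomic Lemmas~\ref{lemcumun}--\ref{lemgalzeta} feeding it; what remains here is essentially formal. The only things I would verify are that $\phi$ is well defined — immediate from the commutation of $L_g$ and $R_g$ and from $L_g^n = R_g^n = \id$ — and that $\phi$ sends the two distinguished elements $X^k - Y^k$ and $XY^{n-1}$ to $\ad(g^k)$ and $\Ad(g)$ respectively. I thus do not anticipate any genuine obstacle beyond these routine checks: the real difficulty was the algebra-of-cyclotomic-fields argument already carried out for Proposition~\ref{propQXY}.
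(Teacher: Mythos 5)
Your proposal is correct and takes essentially the same route as the paper: both deduce the theorem from Proposition \ref{propQXY} by transporting the identity in $\Q[X,Y]/(X^n-1,Y^n-1)$ along the $\Gamma$-bimodule structure of $\Q G$ for $\Gamma = \langle g \rangle$, identifying $X^k-Y^k$ with $\ad(g^k)$ and $XY^{n-1}$ with $\Ad(g)$. The only cosmetic difference is that the paper checks the identity eigenvalue-wise after complexifying and decomposing into one-dimensional irreducible bimodules, whereas you push it directly through the evaluation homomorphism $\phi(X)=L_g$, $\phi(Y)=R_g$, a slight streamlining of the same argument.
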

\begin{proof}
Let $\Gamma \simeq \Z/n\Z$ denote the subgroup generated by $g$. The
algebra $\Q G$ is a $\Gamma$-bimodule, that is a $\Gamma \times \Gamma$-module.
It is thus enough to show that, for any \emph{complex} $\Gamma$-bimodule $M$,
then $\Ad(g)$ can be written as a rational polynomial in the $\ad(g^k)$,
or in $\ad(g)$ if $n$ is even, that depends only on $n$. We can take $M$ irreducible,
hence of dimension 1 and spanned by some nonzero $v \in M$,
for which $g.v = \zeta^r v$ and $v.g = \zeta^s v$, with $\zeta$
some fixed primitive $n$-th root of 1. Then $\ad(g^k)(v) = ((\zeta^r)^k - (\zeta^s)^k)v$
and $\Ad(g)(v) = (\zeta^r \zeta^{-s})v$. By proposition \ref{propQXY} we
get that $XY^{-1}$ is a rational polynomial in the $X^k - Y^k$,
and in $X-Y$ for $n$ odd or $n=2$, inside $\Q[X,Y]/(X^n-1,Y^n-1) = \Q (\Gamma
\times \Gamma)$, and the conclusion follows.
\end{proof}

For small $n$, the polynomials in the statement are easy to find :
$$
\begin{array}{lrcl}
n=2 &  \Ad(g) & = & 2 \Id - \ad(g)^2 \\
n=3 &  18\Ad(g) &=& 18 \Id  + 3 \ad(g)^3 + \ad(g)^6 \\
n = 4 &  8 \Ad(g) &=& 8 \Id - 3 \ad(g^2)^2 - \ad(g)^4 + 2 \ad(g)^2 \ad(g^2)\\
n=5 & 13750 \Ad(g) &=& 13750 \Id - 5875 \ad(g)^5 + 1900 \ad(g)^{10} \\
 & & & - 10 \ad(g)^{15} +
3 \ad(g)^{20} \\
n=6 & 183456 \Ad(g) & = & 183456 - 89573 \ad(g)^6 - 2210 \ad(g)^{12} \\
& & & + 55 \ad(g)^{18}-30576 \ad(g^2)^3+15288 \ad(g^2)^3 \ad(g^3)^2 \\
\end{array}
$$

We remark that it is not possible to express in general $\Ad(g)$ as a polynomial
in $\ad(g)$ for $n >2$ even. Indeed, in $\Q[X,Y]/(X^n - Y^n)$,
$XY^{-1}$ does not belong to the subalgebra generated by
the $(X-Y)^n$, as $(X-Y)^n$ is symmetric in $X,Y$ and $XY^{n-1}$ is not.
Since this algebra is equal to the intersection of the subspace
$C_0$ of homogeneous polynomials with total degree equal to $0$
modulo $n$ with the subalgebra generated by $X-Y$, this proves
that $XY^{n-1}$ is not a polynomial in $X-Y$. Now there exists groups
with a cyclic subgroup $\Gamma \simeq \Z/n\Z$,
e.g. $\Gamma \wr \mathfrak{S}_n \simeq G(n,1,n)$, which admit irreducible
representations whose restriction to $\Gamma$ contains all irreducible
representations of $\Gamma$. Taking for $g$ a generator of $\Gamma$
it follows that $\Q[X,Y]/(X^n-1,Y^n-1) = \Q (\Gamma \times \Gamma)$ embeds in $\Q G$,
hence $\Ad(g)$ is not a polynomial in $\ad(g)$ in these cases.

\tableofcontents



\end{document}